\documentclass[10pt]{amsart}

\usepackage{geometry,graphicx,amssymb,amsmath,amsbsy,eucal,amsfonts,mathrsfs,amscd,bm,esint,yhmath}
\usepackage[all]{xy}
\usepackage{tikz}
\usepackage{multirow}
\usetikzlibrary{arrows,shapes}
\usetikzlibrary{arrows, decorations.markings,fit}
\usetikzlibrary{calc,3d}
\usepackage{tikz-3dplot}
\usepackage{subcaption}

\usepackage{soul}

\tikzset{
    >=stealth',
    punkt/.style={
           rectangle,
           rounded corners,
           draw=black, very thick,
           text width=6.5em,
           minimum height=2em,
           text centered},
    pil/.style={
           ->,
           thick,
           shorten <=2pt,
           shorten >=2pt,}
}

\usepackage{algorithm2e}
\usepackage{listings}

\numberwithin{equation}{section}

\allowdisplaybreaks[3]

\newtheorem{theorem}{Theorem}[section]
\newtheorem{lemma}[theorem]{Lemma}
\newtheorem{assumption}[theorem]{Assumption}

\theoremstyle{definition}
\newtheorem{definition}[theorem]{Definition}
\theoremstyle{remark}
\newtheorem{remark}[theorem]{Remark}

\theoremstyle{exercise}

\newcommand{\p}{{\partial}}

\newcommand{\nab}{\nabla}

\newcommand{\jump}[1]{\left[\hspace{-0.025in}\left[#1\right]\hspace{-0.025in}\right]}

\newcommand{\les}{\lesssim}

\usepackage{yfonts}

\newcommand{\bld}[1]{\boldsymbol{#1}}

\newcommand{\bI}{{\bm I}}

\newcommand{\bv}{\bld{v}}
\newcommand{\bw}{\bld{w}}

\newcommand{\bp}{\bld{p}}
\newcommand{\bn}{\bld{n}}
\newcommand{\bu}{\bld{u}}

\newcommand{\bV}{\bld{V}}

\newcommand{\bPi}{\bld{\Pi}}
\newcommand{\bz}{\boldsymbol{z}}

\newcommand{\bH}{\bld{H}}

\newcommand{\bx}{\bld{x}}
\newcommand{\bC}{\bld{C}}
\newcommand{\bL}{\bld{L}}

\newcommand{\bX}{{\bm X}}

\newcommand{\bPhi}{{\bm \Phi}}
\newcommand{\bvarphi}{\bm \varphi}

\newcommand{\calV}{\mathcal{V}}
\newcommand{\calE}{\mathcal{E}}

\newcommand{\bbR}{\mathbb{R}}

\newcommand{\calS}{\mathcal{S}}
\newcommand{\calT}{\mathcal{T}}

\newcommand{\bbP}{\mathbb{P}}

\newcommand{\bPsi}{{\bm \Psi}}

\newcommand{\calN}{\mathcal{N}}
\newcommand{\bnu}{{\bm \nu}}
\newcommand{\calP}{\mathcal{P}}
\newcommand{\calM}{\mathcal{M}}

\newcommand{\bImac}{{\bm I}^{\rm m}}
\newcommand{\bIct}{{\bm I}^{\rm ct}}

\def\R{{\mathbb{R}}}

\newcommand{\calO}{\mathcal{O}}
\newcommand{\bbN}{\mathbb{N}}

\newcommand{\pt}{\wideparen}

\makeatletter
\def\widebreve{\mathpalette\wide@breve}
\def\wide@breve#1#2{\sbox\z@{$#1#2$}%
     \mathop{\vbox{\m@th\ialign{##\crcr
\kern0.08em\brevefill#1{0.8\wd\z@}\crcr\noalign{\nointerlineskip}%
                    $\hss#1#2\hss$\crcr}}}\limits}
\def\brevefill#1#2{$\m@th\sbox\tw@{$#1($}%
  \hss\resizebox{#2}{\wd\tw@}{\rotatebox[origin=c]{90}{\upshape(}}\hss$}
\makeatletter

\newcommand{\ipt}{\breve}

\newcommand{\Tmac}{{T^{\rm m}}}
\newcommand{\Kmac}{{K^{\rm m}}}
\newcommand{\Tmaca}{T^{\rm m}_a}
\newcommand{\calTmac}{\mathcal{T}^{\rm m}}
\newcommand{\Gammamac}{\Gamma^{\rm m}}
\newcommand{\bPsimac}{{\bm \Psi}_{\rm m}}
\newcommand{\bPsict}{{\bm \Psi}_{\rm ct}}
\newcommand{\calVmac}{\mathcal{V}^{\rm m}}
\newcommand{\calEmac}{\mathcal{E}^{\rm m}}

\newcommand{\amac}{a^{\rm m}}
\newcommand{\bvmac}{{\bm v}^{\rm m}}
\newcommand{\bvhatmac}{\hat{\bv}^{\rm m}}
\newcommand{\overbvmac}{\bar{\bv}^{\rm m}}

\newcommand{\bVmac}{{\bm V}^{\rm m}}
\newcommand{\Qmac}{Q^{\rm m}}
\newcommand{\bVhatmac}{\widehat{\bm V}^{\rm m}}
\newcommand{\Qhatmac}{\widehat{Q}^{\rm m}}
\newcommand{\bVtildemac}{\widetilde{\bm V}^{\rm m}}
\newcommand{\Qtildemac}{\widetilde{Q}^{\rm m}}

\title[Surface Scott-Vogelius]{A Divergence-Free Scott-Vogelius Finite Element Method
for the Surface Stokes Problem}

    \author[Y. Kone ]{Yerim Kone}
\address{\textsuperscript{\textdagger} Department of Mathematics, University of Pittsburgh, Pittsburgh, PA 15260, USA}
    \email{YEK52@pitt.edu}
    \email{neilan@pitt.edu}
    \email{DAP180@pitt.edu}

	 \thanks{This work was supported in part by the NSF, grant no.~DMS-2309425}
    
	\author[M. Neilan]{Michael Neilan}

    \author[D. Poling]{David Poling}

	\thanks{}

\begin{document}

\maketitle

\begin{abstract}
We construct and analyze an 
exactly divergence-free Scott-Vogelius
finite element method for the surface Stokes problem.
The proposed scheme simultaneously enforces
the tangentiality and incompressibility constructs
exactly and has the same number of unknowns
as the two-dimensional Euclidean discretization.
Our construction extends the surface finite element framework of \cite{DemlowNeilan24,DemlowNeilan25} to 
Scott--Vogelius discretizations defined on curved Clough--Tocher triangulations.
In contrast to previous isoparametric Scott--Vogelius methods based on macro-element constructions, the present approach defines the finite element spaces directly on the refined surface triangulation, leading to a substantially simpler and more practical implementation. 
We prove inf-sup stability of the method
and derive optimal-order convergence  in the isoparametric regime.
\end{abstract}

\section{Introduction}

The stationary surface Stokes problem reads
\begin{equation}\label{eqn:SurfaceStokes}
\begin{aligned}
- \bPi {\rm div}_\gamma {\rm Def}_\gamma \bu + \nab_\gamma p + \bu & = {\bm f}\qquad &&\text{on }\gamma,\\
{\rm div}_{\Gamma} \bu & = 0\qquad &&\text{on }\gamma,
\end{aligned}
\end{equation}
along with the pressure constant $\int_\gamma p = 0$
and the velocity tangentiality constrant $\bu\cdot \bnu = 0$. Here,
we assume $\gamma\subset \bbR^3$ is a smooth, closed and orientable two-dimensional surface
with outward unit normal $\bnu$.

Due its many applications in, e.g., lipid bilayers,
thin films, and surfactent-laden interfaces \cite{EdwardsEtal91,NVW12,ReutherVoigt15},
the construction of 
finite element methods for the surface 
(Navier)-Stokes \eqref{eqn:SurfaceStokes}
has become an active and burgeoning area of research.
Similar to the Euclidean setting, numerical schemes
based on the standard velocity-pressure formulation
must satisfy a compatibility criterion of the two discrete spaces
to ensure stability of the method, namely, a discrete inf-sup condition.
However, the direct use of standard Euclidean finite element
pairs is not straightforward due to the tangential velocity
constraint $\bu\cdot \bnu=0$. In particular, this constraint is, in a sense,
incompatible with conventional $\bH^1$-conforming vector-valued
spaces, since strongly enforcing tangentiality of a continuous
vector field on a discrete (Lipschitz) geometry may lead to a locking
 phenomenon that degrades the approximation
properties of the finite element space.

As such, there are generally two approaches
to construct convergent surface finite element methods
for \eqref{eqn:SurfaceStokes} based on the velocity-pressure formulation.
In the first class of methods, discrete spaces developed for the Euclidean formulation are adapted to the surface setting, and the tangential constraint
is enforced weakly either by penalization or by introducing Lagrange multipliers.
\cite{Fries18,HansboLarsonLarsson20,OQRY18,Maxim19}.
While such schemes may lead to convergent
methods, they often require user-defined penalty parameters and involve
superfluous degrees of freedom, since the normal component of the velocity
approximation remains an unknown. In addition, achieving optimal-order convergence
may require a high-order approximation of the surface, particularly 
for low-order discretizations \cite{HP23}.
Consequently, this family of schemes
may lead to sub-optimal convergence rates on solution-dependent evolving geometries
or in situations where the surface is only partially known.

The second class of finite element methods
takes the opposite approach and strongly enforce
tangentiality and relax the $\bH^1$-conformity
of the velocity approximation. For example,
the methods given in \cite{SurfaceStokes1,SurfaceStokes2,BruersEtal26,Kilicer2025}
approximate the velocity field by the $\bH({\rm div})$-conforming
BDM space and use interior penalty or HDG stabilization techniques to compensate
the lack of conformity of the scheme.  These methods
exactly enforce both the tangentiality and divergence-free
constraint in \eqref{eqn:SurfaceStokes},
but they lead to a relatively large algebraic system compared to
$\bH^1$-conforming schemes. In addition, as far as we are aware,
a complete stability and convergence analysis is missing from the literature.

Another approach within this second class,
and the one adopted in the present article,
    is to modify classical Euclidean Stokes pairs so that they 
    exactly enforce the tangenality constraint at the expense 
    of $\bH^1$-conformity \cite{DemlowNeilan24,DemlowNeilan25},
    while still retaining $\bH({\rm div})$-conformity. 
    In particular, the velocity space is constructed
    by utilizing an auxiliary Piola transform to communicate
    function values at interelement Lagrange degrees of freedom.    
    Unlike the BDM-based discretizations, the resuling discrete spaces 
    possess
    inherent weak continuity properties and therefore
    do not require ad hoc stabilization or interelement integration
    in the discrete bilinear forms. Moreover, these methods
     have the same
    dimensions as their two-dimensional Euclidean counterpart
    and do not require high-order approximations of the outward unit
    normal to obtain optimal-order convergence across all polynomial degrees.
The approach given in \cite{DemlowNeilan24} and \cite{DemlowNeilan25}
has been applied to the Euclidean (low-order) MINI element
and the (high-order) Taylor-Hood pair, respectively. These classical
finite element spaces only enforce the divergence-free constraint
weakly and this property is inherited in the corresponding surface
discretizations.

Recently, there has been significant interest
in discretizations that exactly enforce
the divergence-free constraint
in (Navier)-Stokes models,
as such methods lead to several inherent advantages \cite{JohnEtal17,SZhang05,ChristHu18,GuzmanNeilan14,FalkNeilan13,GuzmanNeilan18,AlfeldSorokina16}. In particular, they exactly preserve
conservation laws at the discrete level for any mesh parameter
and are pressure-robust, i.e., the velocity approximation is independent
of the pressure variable with greater robustness with respect
to the model parameters \cite{JohnEtal17}. While the construction 
of exactly divergence--free finite element spaces
is now a flourishing research area for Euclidean fluid models,
substantially less progress has been made in the surface setting.
Aside from the BDM-based methods
\cite{SurfaceStokes1,SurfaceStokes2},
all of the aforementioned surface methods impose the incompressibility 
constraint only weakly and consequently do not produce exactly divergence--free velocity
fields. This motivates the development of surface finite element methods
that simultaneously enforce both the tangentility and 
incompressibility constraints exactly, while still retainining 
the approximation and stability properties of classical Euclidean Stokes pairs.

The goal of this paper is to extend
the constructions of \cite{DemlowNeilan24,DemlowNeilan25}
to divergence-free pairs, with a focus on the Scott-Vogelius
pair \cite{SV85,GuzmanScott19}. In the Euclidean setting
the velocity space consists of continuous, piecewise polynomials
of degree $r$, whereas the pressure space is given by discontinuous
piecewise polynomials of degree $(r-1)$ (in the absence of singular vertices).
In the planar case, this pair is stable provided the polynomial degree
satisfies $r\ge 4$ and the triangulation does not contain
``nearly singular'' vertices. To reduce these restrictive
polynomial-degree and mesh constraints, a common strategy,
and the one adopted here, 
is to implement
the scheme on Clough-Tocher triangulations, 
 i.e., triangulations obtained by connecting the barycenter of
each triangle to its vertices \cite{GuzmanNeilan18,QinThesis}. 
On such triangulations, the polynomial degree restriction is relaxed
to $r\ge 2$ and the presence of (nearly) singular vertices is eliminated.
We extend these results to the surface cases to construct
divergence-free and exactly tangential methods, using 
the construction in \cite{DemlowNeilan24,DemlowNeilan25}.

While the implementation of Scott-Vogelius pairs
on split (Clough-Tocher) triangulations is 
now relatively standard, an alternative and more recent perspective
is to view the spaces as macro-element finite elements
 \cite{NeilanOtus21,DurstNeilan24}.
In this approach, the global finite element space is built on
a generic shape-regular simplicial mesh, whereas the local spaces
are defined with respect to a subdivision of each triangle
into subsimplices. Thus, the reference basis functions
used in the implementation are piecewise polynomials
relative to a locally split reference simplex.
On affine meshes, this macro-element viewpoint is equivalent
to the refined-triangulation construction, 
in the sense that both yield the same global finite element space.
However, the two approaches differ on meshes with curved edges
or faces.

Scott-Vogelius pairs in the Euclidean isoparametric setting,
based on the macro-element definition, were constructed
and analyzed in \cite{NeilanOtus21,DurstNeilan24}. This 
viewpoint naturally aligns with the underlying stability
theory, which itself is based on a macro-element argument.
However, despite its advantages for the stability analysis,
the macro-element construction leads to relatively 
complicated finite element spaces that
are arduous to implement in practice.
        In particular, the spaces
    are defined by mapping {\em piecewise} polynomial
    basis functions on the reference triangle. As a result,
    most standard finite element software does not support this construction,
    as it requires specialized quadrature rules and data structures 
(but see \cite{BrubeckKirby26} for recent progress in this direction).

Unlike the Euclidean isoparametric methods  \cite{NeilanOtus21,DurstNeilan24},
in this paper we define the Scott-Vogelius pair directly 
on a (curved) surface Clough-Tocher triangulation
rather than through a macro-element construction. 
This viewpoint leads to a simpler and more practical scheme
than its macro-element counterpart. In particular,
the proposed velocity space is the same as the Taylor-Hood
space given in \cite{DemlowNeilan25} (but on a Clough-Tocher
triangulation). The tradeoff, however, is a significantly 
more delicate stability analysis. 
To establish stability of the proposed method,
we quantify the discrepancy between the macro-element and non-macro
element constructions and then employ a perturbation
argument to transfer stability from the former to the latter.
As a byproduct of the analysis,
    we also prove the stability of the corresponding macro Scott-Vogelius pair
    on surfaces,
    as well as the stability of the surface $\bbP_r-\bbP_0$ pair.

Lastly we comment on the issue of pressure robustness.
A common feature of divergence-free methods is
that the velocity error is independent of the
pressure approximation. In the Euclidean setting,
this property is one of the main advantages of exactly
divergence-free discretizations. For the surface Stokes problem,
however, the situation is more subtle, and pressure robustness is not 
automatically guaranteed for exactly (surface) divergence-free methods.
The primary obstruction arises from geometric approximation errors
and the required extension of the source function ${\bm f}$
to the computational surface. We argue and numerically
show that commonly used extensions destroy pressure robustness.
We also provide a framework for constructing
extensions of ${\bm f}$ that preserves pressure robustness; 
see Remark~\ref{rem:PressureR}.

The rest of the paper is organized as follows.
After setting the notation, we define the triangulations,
and polynomial diffeomorphisms in Section \ref{sec-prelim}.
Here, we also define pullbacks between the continuous
and discrete surfaces, as well as the Piola transform.
In Section \ref{sec-SV} we define the surface Scott-Vogelius
spaces and prove it is a divergence-free pair.
Section \ref{sec-aux} provides an inf-sup stability
proof of the surface pair, and as a byproduct proves
stability of the associated macro pair.
The finite element method and its convergence analysis
is given in Section \ref{sec:Converge}, and numerical examples
are given in Section \ref{sec-numerics}.
Finally, the proofs of some technical results are presented 
in the appendix.


\subsection{Notation Summary}\
\begin{table}[h!]
\centering
\small
\begin{tabular}{p{0.37\textwidth} p{0.47\textwidth}}

\begin{minipage}[t]{\linewidth}
\vspace{0pt}
\begin{tabular}{@{}ll@{}}
\multicolumn{2}{l}{\emph{Exact surface}} \\[3pt]
$\gamma$            & exact, smooth surface\\
$\bnu$              & outward unit normal of $\gamma$\\
$d$                 & signed distance function\\
$\bp$               & closest point projection\\
${\bf H}$           & Weingarten map\\
$\bPi$              & tangential projection w.r.t.\ $\gamma$\\
${\bm H}_T^1$       & $\bH^1$ tangential vector fields\\
\end{tabular}
\end{minipage}
&
\begin{minipage}[t]{\linewidth}
\vspace{0pt}
\begin{tabular}{@{}ll@{}}
\multicolumn{2}{l}{\emph{Polyhedral surface}} \\[3pt]
$\bar{\Gamma}_h$    & $\calO(h^2)$ polyhedral approximation to $\gamma$\\
$\bar{\bnu}_h$      & outward unit normal of $\bar{\Gamma}_h$\\
$\bar{\mathcal{T}}_h$ & set of faces of $\bar{\Gamma}_h$\\
$\bar\calT_h^{\rm ct}$ & Clough--Tocher refinement of $\bar \calT_h$\\
$\hat{T}$           & reference triangle\\
$F_{\bar{K}}:\hat T\to \bar K$
& affine diffeomorphism for $\bar K\in \bar \calT_h^{\rm ct}$\\
$F_{\bar T}:\hat T\to \bar T$
& affine diffeomorphism for $\bar T\in \bar \calT_h$\\
\end{tabular}
\end{minipage}
\\[8pt]
\begin{minipage}[t]{\linewidth}
\begin{tabular}{@{}ll@{}}
\multicolumn{2}{l}{\emph{High-order surface}} \\[3pt]
$\Gamma_{h}$
& $\calO(h^{k+1})$ approximation to $\gamma$\\
$\bPsict$
& p.w.~polynomial diffeomorphism $\bar{\Gamma}_h \to \Gamma_{h}$\\
$\mathcal{T}_{h}^{\rm ct}
 = \{\bPsict(\bar K):\ \bar K\in \bar \calT_h^{\rm ct}\}$
& triangulation of $\Gamma_{h}$\\
$F_K = \bPsict\circ F_{\bar K}:\hat T\to K$
& polynomial diffeomorphism onto
$K\in \calT_h^{\rm ct}$\\
 $\mathcal{N}^{\rm ct}_{h}$
 & $r$th degree Lagrange nodes of $\mathcal{T}^{\rm ct}_{h}$
 \end{tabular}
 \end{minipage}
\end{tabular}
\end{table}

\begin{table}[h!]
\centering
\small
\begin{tabular}{p{0.97\textwidth}} 
%
 \begin{minipage}[t]{\linewidth}
\begin{tabular}{@{}ll@{}}
 \multicolumn{2}{l}{\emph{Auxiliary high-order surface}} \\[3pt]
 $\bPsimac =  \bI^{\rm m}_h \bPsict$
 & Lagrange interpolant of $\bPsict$\\
 $\calTmac_h
  = \{\bPsimac(\bar T):\ \bar T\in \bar \calT_h\}$
 & auxiliary high-order triangulation\\
 $\calT^{\rm m,ct}_h
  = \{\bPsimac(\bar K):\ \bar K\in \bar \calT^{\rm ct}_h\}$
 & auxiliary high-order CT-like triangulation\\
 $F_{\Tmac} = \bPsimac\circ F_{\bar T}:\hat T\to \Tmac$
 & polynomial diffeomorphism onto
 $\Tmac\in \calTmac_h$\\
  $F_{\Kmac} = \bPsimac\circ F_{\bar K}:\hat T\to \Kmac$
 & polynomial diffeomorphism onto
 $\Kmac\in \calT^{\rm m,ct}_h$\\[8pt]
 \end{tabular}
 \end{minipage}
 \\[8pt]
 \begin{minipage}[t]{\linewidth}
\begin{tabular}{@{}ll@{}}
 \multicolumn{2}{l}{\emph{Pullbacks}} \\[3pt]
 $\bv^e$             & extension of $\bv$\\
 $\bv^\ell$          & lift of $\bv$\\
 $\mathcal{P}_{\bPhi}$ & Piola transform w.r.t.\ $\bPhi$\\
 $\pt\bv = \mathcal{P}_{\bp}\bv$
 & Piola transform w.r.t.\ $\bp$\\
 $\ipt\bv = \mathcal{P}_{\bp^{-1}}\bv$
 & Piola transform w.r.t.\ $\bp|_{\Gamma_{h}}^{-1}$\\
\end{tabular}
 \end{minipage}
\end{tabular}
\end{table}


\section{Preliminaries}\label{sec-prelim}
Let $d$ be the signed-distance function of $\gamma$, defined
in a tubular region $U_\delta$ of $\gamma$.
Set $\bnu = \nab d$, and note that $\bnu|_\gamma$ is the outward unit normal of $\gamma$.
The shape operator
is given by ${\bf H} = \nab \bnu = D^2 d$, and note that ${\bf H}\bnu = 0$ since $|\bnu|=1$.
The other two eigenvalues of ${\bf H}$ are denoted by $\kappa_1$ and $\kappa_2$, the principal
curvatures of $\gamma$. The closest point projection 
$\bp:U_\delta \to \gamma$ is given by $p(x) = x- d(x)\bn(x)$,
and we define the tangential projection $\bPi = {\bf I}_3- \bnu\otimes \bnu$, 
where ${\bf I}_3$ is the $3\times 3$ identity matrix.

The surface gradient of a scalar function $p:\gamma \to \bbR$
is given by $\nab_\gamma p = \bPi \nab p^e$, where $p^e = p\circ \bp$
denotes its extension. The surface gradient 
of a vector-valued function $\bv:\gamma\to \bbR^3$ 
is $\nab_\gamma \bv = \bPi \nab \bv^e \bPi$, whereas
the surface divergence and deformation tensor is
${\rm div}_\gamma \bv = {\rm tr}(\nab_\gamma \bv)$
and ${\rm Def_\gamma}\bv = \frac12 (\nab_\gamma \bv+(\nab_\gamma \bv)^\intercal)$,
respectively.

We denote by $\bH^1(\gamma)$ the Sobolev space
of vector fields whose components lie in $H^1(\gamma)$,
and let $\bH^1_T(\gamma) = \{\bv\in \bH^1(\gamma):\ \bv\cdot \bnu = 0\}$.
Let $\bH({\rm div}_\gamma;\gamma)$ denote the space of square integrable
vector fields with (weak) divergence in $L^2(\gamma)$, and 
let $L^2_0(\gamma)$ be the space
of square integrable functions on $\gamma$ with vanishing mean.
For a flat domain $D$ and $k\in \mathbb{N}_0 = \mathbb{N}\cup \{0\}$, 
we denote by $\mathbb{P}_k(D)$ the space of polynomials defined
on $D$ with degree $\le k$.

\subsection{Affine triangulation}
We adopt the construction and notations of the triangulations found in \cite{DemlowNeilan25}.

Let $\bar \Gamma_h$ be a closed, polyhedral surface,
the union of (flat) triangular faces $\bar \calT_h$,
and set $h_{\bar T} = {\rm diam}(\bar T)$ for all $\bar T\in \bar \calT_h$.
We assume that the distance function $d$ is defined on $\bar \Gamma_h$
and satisfies $d(x) = \calO(h^2)$ for every $x\in \bar \Gamma_h$,
where $h:=\max_{\bar T\in \bar \calT_h} h_{\bar T}$.
For each $\bar T\in \bar \calT_h$, let $\bar \bnu_{\bar T}\in \bbR^3$ be
its outward unit normal. Let $\bar \calV_h$ and $\bar \calE_h$
denote the sets of vertices and edges of $\bar \calT_h$, respectively.

Let $\hat T$ be the reference triangle with vertices $(1,0),(0,1),(0,0)$,
and let $F_{\bar T}:\hat T\to \bar T$ be an affine diffeomorphism, assumed to satisfy
\begin{align}\label{eqn:FbarTJacobian}
\|\nab F_{\bar T}\|_{L^\infty(\hat T)}\lesssim h_{\bar T},\qquad \det(\nab F_{\bar T}^\intercal \nab F_{\bar T}) \approx h_{\bar T}^4.
\end{align}
Here, $\nab F_{\bar T}\in \bbR^{3\times 2}$ is the 
(constant) Jacobian of $F_{\bar T}$.
We use the notation $a\lesssim b$
to indicate the existence of a constant $C>0$
independent of the mesh size
such that $a\le C b$. Similarly $a\approx b$
means $a\lesssim b$ and $b\lesssim a$ hold.

\subsection{Clough-Tocher splits}
For each affine triangle $\bar T\in \bar \calT_h$, 
let $\bar T^{ct} = \{\bar K_1,\bar K_2,\bar K_3\}$
be the local Clough-Tocher split, obtained by connecting
the vertices of $\bar T$ with its barycenter. 
Note that, because $\bar T$ is affine, its barycenter
lies in its interior. In particular, even if the vertices
of $\bar T$ lie on the exact surface, its barycenter
generally does not, and the refinement does not lead
to a better geometric approximation, although it is still
an $\mathcal{O}(h^2)$ approximation to $\gamma$.

The analogous
split on the reference triangle is denoted by 
$\hat T^{\rm ct} = \{\hat K_1,\hat K_2,\hat K_3\}$,
and the collection of all split
triangles yields the affine Clough-Tocher triangulation:
\[
\bar \calT_h^{\rm ct}:=\bigcup_{\bar T\in \bar \calT^{\rm ct}_h} \bar T^{\rm ct},
\]
Note that $\bar \Gamma_h = \cup_{\bar K\in \bar \calT^{\rm ct}_h} \bar K$.
Letting $F_{\bar K}:\hat T\to {\bar K}$ denote an affine
diffeomorphism from $\hat T$ to $\bar K\in \bar \calT_h^{\rm ct}$,
we easily see from shape regularity of $\bar \calT_h$ that the Jacobian of $F_{\bar K}$ satisfies the same bounds
as \eqref{eqn:FbarTJacobian}, i.e.,
\begin{align}\label{eqn:FbarKJacobian}
\|\nab F_{\bar K}\|_{L^\infty(\hat T)}\lesssim h_{\bar K},\qquad \det(\nab F_{\bar K}^\intercal \nab F_{\bar K}) \approx h_{\bar K}^4,
\end{align}
with $h_{\bar K} = {\rm diam}(\bar K) \approx h_{\bar T}$ ($\bar K\in \bar T^{\rm ct})$.
We denote by $\bar \calE_h^{\rm ct}$ the set of edges in
$\bar \calT_h^{\rm ct}$.

\subsection{High-order triangulation}
Let $\Gamma_h$ be a high-order approximation to $\gamma$, defined
by a continuous, $k$-degree piecewise polynomial mapping
{\em with respect to $\bar \calT_h^{\rm ct}$},
$\bPsict:\bar \Gamma_h\to \Gamma_h$.
In particular, 
\[
\Gamma_h = \bigcup_{K\in \calT^{\rm ct}_h} {\rm cl}(K),\qquad \calT^{\rm ct}_h = \{\bPsict(\bar K):\ \bar K\in \bar \calT^{\rm ct}_h\},
\]
where $\bPsict|_{\bar K}\in [\bbP_k(\bar K)]^3$ for all $\bar K\in \bar \calT_h^{\rm ct}$. We assume $\Gamma_h$ is an $\calO(h^{k+1})$ approximation to $\gamma$ for some $k\in \bbN$ in the sense that
\begin{align}\label{eqn:bPsiBounds}
|\bp(x)- \bPsict(x)| = \calO(h^{k+1}),\qquad |\nab \bp(x) - \nab \bPsict(x)| = \mathcal{O}(h^k),\qquad \text{on }\bar\Gamma_h.
\end{align}

We set $\calE_h^{\rm ct} = \{\bPsict(\bar e):\ \bar e\in \bar \calE^{\rm ct}_h\}$ to 
denote the set of curved edges in $\calT_h^{\rm ct}$.
The outward unit normal of $\Gamma^{\rm ct}_h$ is denoted
by $\bnu_h$, and we set $\bnu_K = \bnu_h|_K$ for $K\in \calT_h^{\rm ct}$.
Note that the second inequality
in \eqref{eqn:bPsiBounds} implies
\begin{equation}\label{eqn:NormalGood}
|\bnu-\bnu_h|= \calO(h^k).
\end{equation}
Further note that high-order piecewise derivatives of
$\bPsict$ are bounded due to \eqref{eqn:bPsiBounds},
 inverse estimates, and the smoothness of $\bp$.
 It then follows that piecewise derivatives
 of $\bnu_h$ on $\Gamma^{\rm ct}_h$ are uniformly 
 bounded.

For each $K\in \calT^{\rm ct}_h$, we let $F_K:\hat T\to K$ 
be the $k$th-degree polynomial diffeomorphism defined by
$F_K = \bPsict \circ F_{\bar K}$, where $\bar K = \bPsict^{-1}(K)\in \bar \calT^{\rm ct}_h$ (cf.~Figure~\ref{fig:MappingsABC}).
We assume $(1\le m\le (k+1))$
\begin{align}\label{eqn:FKBounds}
|F_K|_{W^{m,\infty}(\hat T)}\lesssim h^m_K,\qquad |F_K^{-1}|_{W^{m,\infty}(K)}\lesssim h^{-1}_K,\qquad
\det(\nab F_K^\intercal \nab F_K) \approx h^4_K,
\end{align}
where $h_K = h_{\bar K}$ with $\bar K = \bPsict^{-1}(K)$.
These estimates hold if, e.g., $\bPsict$ is the $k$-th degree Lagrange interpolant of $\bp$ with respect to the mesh $\bar \calT_h^{\rm ct}$ (cf.~\cite{Demlow09,Lenoir86}),
but we do not necessarily make this assumption. 
We also see that \eqref{eqn:FbarKJacobian} and \eqref{eqn:FKBounds}
yield the following estimates for the mapping $\bPsict$:
\begin{align}\label{eqn:bPsictBounds}
|\bPsict|_{W^{m,\infty}(\bar K)}\lesssim 1,\qquad |\bPsict^{-1}|_{W^{m,\infty}(K)}\lesssim 1.
\end{align}



For a piecewise smooth function $v$
with respect to a partition $\mathcal{D}_h$,
we set the piecewise $H^m$ norm as
\[
\|v\|_{H^m_h(D_h)}^2 = \sum_{S\in \mathcal{D}_h} \|v\|_{H^m(S)}^2, 
\]
where $D_h = \cup_{S\in \mathcal{D}_h} S$.

\begin{figure}
\caption{\label{fig:MappingsABC}A depiction of the mappings $\bPsict|_{\bar K}$, $F_{\bar K}$, and $F_K$.
\medskip\\}
\begin{tikzpicture}[scale=1.25, line cap=round, line join=round]

\tikzset{
  surf/.style={draw, thick},
  edge/.style={draw, thick},
  map/.style={->, thick, >=stealth},
  dashededge/.style={draw, dashed, gray},
  lab/.style={font=\small}
}

\coordinate (A) at (0,3.15);
\coordinate (B) at (4.05,4.95);
\coordinate (C) at (5.25,3.10);
\coordinate (D) at (2.95,3.80);

\draw[surf] (A) .. controls (1.25,4.55) and (2.85,4.85) .. (B)
           .. controls (5.10,4.55) and (5.55,3.75) .. (C)
           .. controls (3.55,2.95) and (1.35,2.85) .. (A);

\draw[edge] (A) .. controls (1.25,3.25) and (2.20,3.45) .. (D);
\draw[edge] (D) .. controls (3.65,3.55) and (4.50,3.25) .. (C);
\draw[edge] (D) .. controls (3.35,4.20) and (3.75,4.55) .. (B);

\node[lab] at (1.95,4.15) {$K_3$};
\node[lab] at (4.05,4) {$K_2$};
\node[lab] at (2.85,3.25) {$K_1$};

\coordinate (a) at (1.45,0.95);
\coordinate (b) at (4.75,1.45);
\coordinate (c) at (4.0,-0.5);

\coordinate (d) at (3.4,0.6333);

\draw[surf] (a) -- (b) -- (c) -- cycle;
\draw[edge] (a) -- (d);
\draw[edge] (b) -- (d);
\draw[edge] (c) -- (d);

\node[lab] at (2.95,0.3611) {$\overline K_1$};
\node[lab] at (4.05,0.52778) {$\overline K_2$};
\node[lab] at (3.2,1.011) {$\overline K_3$};
\node[lab] at (3.25,-0.5) {$\overline T$};

\draw[map] (2.6,0.4) .. controls (0.75,1.65) and (0.35,2.70) .. (1.45,3.15);
\node[lab] at (0.65,1.9) {$\bPsi_{\rm ct}|_{\overline K_1}$};

\draw[map] (4.25,0.55) .. controls (5.35,1.75) and (5.25,2.85) .. (4.05,3.65);
\node[lab] at (5.42,2.25) {$\bPsi_{\rm ct}|_{\overline K_2}$};

\draw[map] (3.45,1.10) .. controls (2.80,2.20) and (2.45,3.25) .. (2.35,4.15);
\node[lab] at (2.40,2.55) {$\bPsi_{\rm ct}|_{\overline K_3}$};

\coordinate (x) at (1,-1.5);
\coordinate (y) at (-1,0.5);
\coordinate (z) at (-1,-1.5);
\draw[surf] (x) -- (y) -- (z) -- cycle;
\node[lab] at (-1/3,-2.5/3) {$\hat T$};



\draw[map]
  (0.25,-0.55)
  .. controls (0.75,-0.10) and (1.55,0.10) ..
  (3.25,0.05);
\node[lab] at (1.50,-0.25) {$F_{\overline K_1}$};

\draw[map]
  (-0.15,-0.25)
  .. controls (-0.45,0.40) and (-0.40,1.60) ..
  (0.95,3.15);
 \node[lab] at (-0.55,1.20) {$F_{K_1}$};




\end{tikzpicture}
\end{figure}

\subsection{Mappings between $\bar \Gamma_h$, $\Gamma_h$, and $\gamma$}
For a (scalar or vector-valued) function $w$ defined on $\gamma$,
we let $w^e = w\circ \bp$ denotes its extension 
to the neighborhood $U_\delta$. Likewise, for a function $v$ defined
on a surface $S_h\subset U_\delta$ (e.g., $S_h = \bar \Gamma_h$ or $S_h = \Gamma_h$), 
we set $\tilde v = v\circ (\bp|_{S_h}^{-1})$ and define
the lift of $v$ as $v^\ell = \tilde v \circ \bp$ on $U_\delta$.
%
We have \cite{CD16}
\begin{align*}
\int_{\bar \Gamma_h} \bar \mu_h w^e = \int_{\Gamma_h} \mu_h w^e = \int_{\Gamma} w,
\end{align*}
where
\begin{equation}\label{eqn:muDef}
\mu_h:= \bnu\cdot \bnu_h(1-d \kappa_1)(1-d\kappa_2),\quad 
\bar \mu_h = \bnu\cdot \bar \bnu_h(1-d \kappa_1)(1-d\kappa_2),
\end{equation}
and we recall that $\{\kappa_1,\kappa_2\}$ are the principal curvatures of $\gamma$.

Next, we introduce the Piola transform 
between two surfaces as described in \cite{CD16,DemlowNeilan24,DemlowNeilan25}.
Let $\mathcal{S}_0$ and $\mathcal{S}_1$ be two surfaces,
and suppose $\bPhi:\mathcal{S}_0\to \mathcal{S}_1$ is a 
bi-Lipschitz mapping. The Piola transform of a vector field
$\bv_0:\mathcal{S}_0\to \mathbb{R}^3$ with respect
to $\bPhi$ is the function $(\calP_{\bPhi} \bv_0):\mathcal{S}_1\to \mathbb{R}^3$ given
by
\[
(\calP_{\bPhi} \bv_0) := (\mu^{-1} \nab \bPhi \bv_0)\circ \bPhi^{-1},
\]
where $\mu:\mathcal{S}_0\to \mathbb{R}$ satisfies
$\mu d\sigma_0 = d \sigma_1$, and $d\sigma_i$ is the surface measure
of $\mathcal{S}_i$. Likewise, the Piola transform of 
a vector field $\bv_1:\mathcal{S}_1\to \mathbb{R}^3$ 
with respect to the inverse mapping $\bPhi^{-1}$
is given by 
\[
(\calP_{\bPhi^{-1}} \bv_1) = \mu (\nab \bPhi^{-1} \bv_1)\circ \bPhi:\mathcal{S}_0\to \mathbb{R}^3.
\]
These transforms satisfy 
${\rm div}_{\calS_0} \bv_0 = (\mu^{-1} {\rm div}_{\calS_1} \calP_{\bPhi} \bv_0)\circ \bPhi^{-1}$
and 
${\rm div}_{\calS_1} \bv_1 = \mu ({\rm div}_{\calS_0} \calP_{\bPhi^{-1}} \bv_1)\circ \bPhi$
for all $\bv_0\in \bH({\rm div}_{\calS_0};\calS_0)$ and 
$\bv_1\in \bH({\rm div}_{\calS_1};\calS_1)$.
In particular, $\calP_{\bPhi}:\bH({\rm div}_{\calS_0};\calS_0)\to\bH({\rm div}_{\calS_1};\calS_1)$
and
 $\calP_{\bPhi^{-1}}:\bH({\rm div}_{\calS_1};\calS_1)\to\bH({\rm div}_{\calS_0};\calS_0)$
are bounded operators.

In the case $\calS_0 = \Gamma_h$, $\calS_1 = \gamma$, 
and $\bPhi = \bp|_{\Gamma_h}$, we introduce the notation
\[
\pt \bv_0:=\calP_{\bp} \bv_0 = (\mu_h^{-1} \nab \bp \bv_0)\circ \bp^{-1} = (\mu_h^{-1}\big[\bPi- d{\bf H}\big]\bv_0)\circ \bp^{-1},
\]
where $\mu_h$ is given by \eqref{eqn:muDef}.
Likewise, we set
\[
\ipt \bv_1:= \calP_{\bp^{-1}}\bv_1 = \mu_h \left[{\bf I}_3 - \frac{\bnu\otimes \bnu_h}{\bnu\cdot \bnu_h}\right]\left[{\bf I}_3 - d {\bf H}\right]^{-1} \bv_1\circ \bp,
\]
and note that
\begin{alignat*}{2}
\int_{\Gamma_h} ({\rm div}_{\Gamma_h} \bv_0) q &= \int_\gamma ({\rm div}_{\gamma}\pt \bv_0)q^\ell\qquad &&\forall q\in L^2(\Gamma_h),\\
\int_\gamma ({\rm div}_\gamma \bv_1)q & = \int_{\Gamma_h} ({\rm div}_{\Gamma_h} \ipt \bv_1) q^e\qquad&& \forall q\in L^2(\gamma),
\end{alignat*}
and (cf.~\cite[(2.11)]{DemlowNeilan25})
\[
\|\bv_0\|_{H^\ell(K)}\approx \|\pt \bv_0\|_{H^\ell(\bp(K))}\qquad \forall \bv_0 \in \bH^\ell(K)\ \forall K\in \calT_h^{\rm ct},\ \ell\in \mathbb{N}_0.
\]
Similarly, if  
$\bv\in \bH({\rm div}_{\Gamma_h};\Gamma_h)$
    and $\bar \bv = \calP_{\bPsict^{-1}} \bv \in \bH({\rm div}_{\bar \Gamma_h};\bar \Gamma_h)$, then we have
    \begin{align*}
        \int_{\Gamma_h} ({\rm div}_{\Gamma_h} \bv) q = \int_{\bar \Gamma_h} ({\rm div}_{\bar \Gamma_h}  \bar \bv) (q\circ \bPsict)\qquad \forall q\in L^2(\Gamma_h),
    \end{align*}
and \cite[(2.16)]{DemlowNeilan25}
\[
\|\bv\|_{H^\ell(K)} \approx \|\bar \bv\|_{H^\ell(\bar K)}\qquad \forall \bv\in \bH^\ell(K),\ \ell\in \mathbb{N}_0,\ K = \bPsict(\bar K).
\]

\section{The surface Scott-Vogelius finite element pair}\label{sec-SV}
For $r\in \bbN$ with $r\ge 2$ and
for a curved surface triangle
in the Clough-Tocher triangulation $K\in \calT_h^{\rm ct}$, we define the local spaces
\[
\bV(K) :=\{\calP_{F_K} \hat \bv:\ \hat \bv\in \bbP_r(\hat T)^2\},\qquad
Q(K) := \{\hat q\circ F_K^{-1}:\ \hat q\in \bbP_{r-1}(\hat T)\},
\]
where we recall that $F_K:\hat T\to K$ is a polynomial
diffeomorphism of degree $k$, and $\calP_{F_K}$ is the Piola
transform with respect to this mapping.

Let $\bar \calN_h^{\rm ct}$ denote the set
of points of Lagrange degrees of freedom (DOFs) of degree $r$
with respect to the affine Clough-Tocher triangulation $\bar \calT_h^{\rm ct}$,
and map these points to the computational triangulation $\calT_h^{\rm ct}$ via $\bPsict$:
\[
\calN_h^{\rm ct}:=\{\bPsict(\bar a):\ \bar a\in \bar \calN_h^{\rm ct}\}.
\]
As in \cite{DemlowNeilan25}, to control the consistency errors
of the resulting finite element scheme in Section \ref{sec:Converge} below,
we make the following assumption
about the placement of the edge DOFs.
\begin{assumption}\label{assume:GL}
We assume that the edge DOFs in $\bar \calN_h^{\rm ct}$ correspond to the $(r+1)$-point
Gauss-Lobatto rule.
\end{assumption}

For a DOF $a$, we denote by $\calT^{\rm ct}_a\subset \calT_h^{\rm ct}$
the set of elements abutting $a$.
We then introduce the following Piola transform
used to enforce weak continuity properties
in the (velocity) finite element space (cf.~\cite{DemlowNeilan24,DemlowNeilan25}).
\begin{definition}
 For each $a\in \calN^{\rm ct}_h$, we arbitrarily choose a single fixed $K_a\in \calT^{\rm ct}_a$.
For $K\in \calT^{\rm ct}_a$, we define $\mathcal{M}_a^K:\bbR^3\to \bbR^3$ by 
\begin{align}\label{eqn:calMaK}
\calM_a^K {\bm x} = \left((\bnu_{K_a}(a)\cdot \bnu_K(a)){\bf I}_3 - \bnu_{K_a}(a) \otimes \bnu_K(a)\right) {\bm x}.
\end{align}
Namely, $\calM_a^K {\bm x}$ is the Piola transform
of the vector ${\bm x}$ with respect to the inverse of the closest point projection
onto the plane tangent to $K_a$ at $a$ (cf.~\cite{DemlowNeilan24,DemlowNeilan25}).
\end{definition}

The global Scott-Vogelius-type pair
 is defined by concatenating the local spaces,
gluing the velocity space at the Lagrange DOFs via the operator $\calM^K_a$:
\begin{align*}
\bV_h & = \{\bv\in \bL^2(\Gamma_h):\ \bv|_K\in \bV(K)\ \forall K\in \calT_h^{\rm ct};\ 
 \bv|_K(a) = \calM_a^K \bv|_{K_a}(a)\ \forall a\in \calN_h^{\rm ct}\ \forall K\in \calT_a^{\rm ct}\},\\
Q_h & = \{q\in L^2_0(\Gamma_h):\ q|_K\in Q(K)\ \forall K\in \calT_h^{\rm ct}\}.
\end{align*}
The discrete velocity space $\bV_h$ is the same
as the one given in \cite{DemlowNeilan25}, but defined with respect to a surface Clough-Tocher
triangulation. However, the discrete pressure space $Q_h$ consists of discontinuous
(mapped) polynomials, whereas continuous (mapped) piecewise polyomials are used
in \cite{DemlowNeilan25}.

\begin{remark}\label{rem:HdivRemark}
It is shown in \cite{DemlowNeilan25}
that $\bV_h\subset \bH({\rm div}_{\Gamma_h};\Gamma_h)$, i.e.,
functions in $\bV_h$ have co-normal continuity across edges
in the mesh. In addition, the space has ``weak continuity'' properties,
as described in Section \ref{sec:Consistency} below.
\end{remark}

The next lemma shows that 
$\bV_h\times Q_h$ constitutes a divergence-free pair
for the Stokes problem.
%
\begin{lemma}\label{lem:DivFreeProp}
    If $\bv \in \bV_h$ satisfies 
    \[\int_{\Gamma_h}({\rm div}_{\Gamma_h} \bv)q=0 \qquad \forall q\in Q_h,
    \]
    then ${\rm div}_{\Gamma_h} \bv \equiv 0$ on $\Gamma_h$.
\end{lemma}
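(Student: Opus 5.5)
The plan is to show that ${\rm div}_{\Gamma_h}\bv$, restricted to each $K\in\calT_h^{\rm ct}$, lies in a space that is, up to a positive weight and a global mean, exactly the pressure space $Q_h$. I would then test with a suitable element of $Q_h$. The tool is the divergence identity for the Piola transform. Fix $K\in\calT_h^{\rm ct}$ and write $\bv|_K=\calP_{F_K}\hat\bv$ with $\hat\bv\in\bbP_r(\hat T)^2$. By the transformation rule recalled in Section~\ref{sec-prelim}, with $\calS_0=\hat T$, $\calS_1=K$ and $\bPhi=F_K$,
\[
{\rm div}_{\Gamma_h}\bv|_K=\big(\mu_K^{-1}\,\widehat{\rm div}\,\hat\bv\big)\circ F_K^{-1},\qquad \mu_K:=\sqrt{\det(\nab F_K^\intercal\nab F_K)}>0 .
\]
Since $\widehat{\rm div}\,\hat\bv\in\bbP_{r-1}(\hat T)$, the function $\hat q_K:=\widehat{\rm div}\,\hat\bv$ defines $q_K:=\hat q_K\circ F_K^{-1}\in Q(K)$. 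Moreover ${\rm div}_{\Gamma_h}\bv|_K=(\mu_K^{-1}\circ F_K^{-1})\,q_K$, where $\mu_K\circ F_K^{-1}$ is exactly the area-density relating $d\hat x$ to the surface measure on $K$.

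Next I define $q$ piecewise by $q|_K=q_K$. This $q$ is a discontinuous mapped polynomial of degree $r-1$. Its mean need not vanish, so set $\bar q:=q-|\Gamma_h|^{-1}\int_{\Gamma_h}q\in Q_h$, using that constants belong to $Q(K)$. For the constant part I use Remark~\ref{rem:HdivRemark}: since $\bv\in\bH({\rm div}_{\Gamma_h};\Gamma_h)$ and $\Gamma_h$ is closed, the divergence theorem gives $\int_{\Gamma_h}{\rm div}_{\Gamma_h}\bv=0$. Hence the hypothesis yields
\[
0=\int_{\Gamma_h}({\rm div}_{\Gamma_h}\bv)\,\bar q=\int_{\Gamma_h}({\rm div}_{\Gamma_h}\bv)\,q .
\]
Pulling each element integral back to $\hat T$, the Jacobian $\mu_K$ cancels the weight $\mu_K^{-1}$, and I obtain
\[
0=\sum_{K}\int_K(\mu_K^{-1}\circ F_K^{-1})\,q_K^2=\sum_K\int_{\hat T}\mu_K^{-1}\hat q_K^2\,\mu_K\,d\hat x=\sum_K\|\hat q_K\|_{L^2(\hat T)}^2 .
\]
Therefore $\hat q_K=0$ for every $K$, and so ${\rm div}_{\Gamma_h}\bv\equiv0$.

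The main obstacle is the sign and normalisation bookkeeping in the Piola divergence identity on a curved element. One must confirm that the factor appearing is exactly $\mu_K^{-1}$, the reciprocal of the same density used in the change of variables, with no additional non-polynomial factor, and that $\mu_K>0$ by \eqref{eqn:FKBounds}. If a weight did survive, I would instead test with $q_K$ weighted so that the integrand becomes a square of a positive-weighted quantity. The argument above sidesteps this because the weight enters exactly once and cancels in the pullback. The mean-zero adjustment also relies on the co-normal continuity from Remark~\ref{rem:HdivRemark}, which I take as given.
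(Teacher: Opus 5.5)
Your proof is correct and follows essentially the same route as the paper's. The paper also tests with $q|_K=(\widehat{\rm div}\,\hat\bv_K)\circ F_K^{-1}$ minus its global mean, uses the divergence theorem to discard the constant, and arrives at $\sum_K\int_K \sqrt{\det(\nab F_K^\intercal\nab F_K)\circ F_K^{-1}}\,|{\rm div}_{\Gamma_h}\bv|^2=0$. That identity is your $\sum_K\|\hat q_K\|^2_{L^2(\hat T)}=0$, written on $K$ instead of pulled back to $\hat T$.
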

\begin{proof}
For each $K\in \calT_h^{\rm ct}$,
 write $\bv|_K = \calP_{F_K} \hat \bv_K$ for some $\hat \bv_K\in \bbP_r(\hat T)^2$.
 Define $q$ such that
 \[
 q|_K=(\hat{\nab}\cdot \hat{\bv}_K)\circ F_K^{-1}\qquad \forall K\in \calT^{\rm ct}_h,
 \]
and let $c = |\Gamma_h|^{-1}\int_{\Gamma_h} q\in \bbR$.
Because $\hat \nab \cdot \hat \bv_K\in \hat \bbP_{r-1}(\hat T )$, we have
$q|_K\in Q(K)$ for all $K\in \calT^{\rm ct}_h$, and so $(q-c)\in Q_h$.

By the divergence theorem, and by divergence-preserving 
    properties of the Piola transform, we have
\begin{align*}
0 &=     \int_{\Gamma_h} ({\rm div}_{\Gamma_h} \bv)(q-c)
= \int_{\Gamma_h} ({\rm div}_{\Gamma_h} \bv)q\\
&= \sum_{K\in \calT^{\rm ct}_h} \int_K  ({\rm div}_{\Gamma_h} \bv) (\hat \nab \cdot \hat \bv_K)\circ F_K^{-1}
=  \sum_{K\in \calT^{\rm ct}_h} \int_K  \sqrt{\det(\nab F_K^\intercal \nab F_K)\circ F_K^{-1}} |{\rm div}_{\Gamma_h} \bv|^2.
\end{align*}
From this identity, we conclude ${\rm div}_{\Gamma_h} \bv \equiv 0$.
\end{proof}

One of the main goals in this paper is to prove the following inf-sup
stability result. The next section is devoted to its proof.

\begin{theorem}\label{thm:MainStab}
There exists $\beta>0$ independent of the mesh parameter $h$ such that
\begin{equation}\label{equ:MainInfSup}
\beta \|q\|_{L^2(\Gamma_h)}\le \sup_{\bv\in \bV_h\backslash \{0\}} \frac{\int_{\Gamma_h} ({\rm div}_{\Gamma_h}\bv)q}{\|\bv\|_{H^1_h(\Gamma_h)}}\qquad \forall q\in Q_h.
\end{equation}
\end{theorem}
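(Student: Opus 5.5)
The plan is to follow the strategy announced in the introduction. First establish inf-sup stability for an auxiliary macro-element pair on the triangulation $\calT^{\rm m}_h$. Then transfer it to $\bV_h\times Q_h$ by a perturbation argument, using that $\bPsict$ and $\bPsimac=\bImac_h\bPsict$ differ by a small amount relative to $h$.

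\textbf{Step 1: macro pair.} On each $\Tmac=\bPsimac(\bar T)$ define the macro spaces by Piola-mapping, via the single polynomial map $F_{\Tmac}$, continuous piecewise $\bbP_r$ vector fields on $\hat T^{\rm ct}$. Pressures are the corresponding composed discontinuous piecewise $\bbP_{r-1}$ functions. Glue velocities at the Lagrange nodes with operators $\calM_a^K$ built from the normals of $\calT^{\rm m}_h$.

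Next split $q=q_0+\bar q$, where $\bar q$ is piecewise constant on $\calTmac_h$ and $q_0$ has zero mean on each macro element. For $q_0$ use the Euclidean Clough--Tocher result on $\hat T^{\rm ct}$: there the reference divergence maps $\{\hat\bv\in H^1_0\cap \bbP_r(\hat T^{\rm ct})^2\}$ onto the mean-zero pressures, with a uniform constant, since $r\ge2$ and the barycenter vertex is nonsingular \cite{GuzmanNeilan18}. Because the Piola transform on a single macro element preserves divergence up to the Jacobian weight, which is an $O(h)$ perturbation of a constant by \eqref{eqn:FKBounds}, this gives local stability with a scaled constant.

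For $\bar q$, the pressure is piecewise constant on the unsplit mesh. Prove surface $\bbP_r$--$\bbP_0$ stability by constructing $\bv$ from the $H^1$ surface inf-sup on $\gamma$. Pull that field back with $\ipt$, then apply a Fortin-type operator that preserves edge fluxes, using the edge DOFs and the weak continuity at nodes, with co-normal continuity from Remark~\ref{rem:HdivRemark}. Combine the two pieces by the standard macro-element (Stenberg / Boland--Nicolaides) argument.

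\textbf{Step 2: comparison map.} Build an isomorphism $\Lambda:\bV_h\to\bVmac_h$ by matching reference coefficients: $\bv|_K=\calP_{F_K}\hat\bv$ goes to $\calP_{F_{\Kmac}}\hat\bv$. Since $\bPsict$ and $\bPsimac$ share Lagrange nodes, the gluing conditions correspond. Similarly define $q\mapsto q^{\rm m}$ through $\hat q$.

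Then estimate, for a given $q$ and the macro velocity $\bv^{\rm m}$ from Step~1,
\[
\Big|\int_{\Gamma_h}({\rm div}_{\Gamma_h}\Lambda^{-1}\bv^{\rm m})q-\int_{\Gamma_h^{\rm m}}({\rm div}\,\bv^{\rm m})q^{\rm m}\Big|\lesssim \epsilon(h)\|\bv^{\rm m}\|_{H^1_h}\|q\|,
\]
together with norm equivalences $\|\Lambda^{-1}\bv^{\rm m}\|_{H^1_h}\approx\|\bv^{\rm m}\|_{H^1_h}$ and $\|q\|\approx\|q^{\rm m}\|$. Pulling everything back to $\hat T$, the divergence pairing is just $\int_{\hat T}(\hat\nab\cdot\hat\bv)\hat q$ on each reference subtriangle. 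This is identical for both meshes thanks to the Piola property, provided both pairings are taken unweighted; in that case the difference vanishes exactly. If weights appear instead, they differ by $|\nab F_K-\nab F_{\Kmac}|/h\lesssim$ an interpolation error of $\bPsict$.

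\textbf{Main obstacle.} The delicate point is the $H^1$ norm equivalence. $\bPsict$ is only piecewise polynomial on $\bar\calT^{\rm ct}_h$, while $\bPsimac$ is a single polynomial per $\bar T$. Their difference is therefore $\bPsict-\bImac_h\bPsict$, whose derivatives are bounded via \eqref{eqn:bPsictBounds} and \eqref{eqn:bPsiBounds}. Its $W^{1,\infty}$ size is $O(h)$ on each $\bar K$ after scaling, because $\bPsict$ is $O(h^2)$-close in $W^{1,\infty}$ to the smooth $\bp$. I would make this precise by comparing both maps with the interpolant of $\bp$, which gives $\|\nab(\bPsict-\bPsimac)\|_{L^\infty}\lesssim h$ on each split element. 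Once this holds, choose $\bv=\Lambda^{-1}\bv^{\rm m}$ and combine Step~1 with the Step~2 estimates to obtain \eqref{equ:MainInfSup} for $h$ small. For the finitely many coarse meshes, use Lemma~\ref{lem:DivFreeProp}-type surjectivity arguments.
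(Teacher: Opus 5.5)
Your Step 1 follows the paper's route (Lemmas~\ref{lem:LocalMacroStable} and \ref{lem:P2P0}, Theorem~\ref{thm:MacStable}). Step 2, however, has a genuine gap. The coefficient-matching map $\Lambda:\calP_{F_K}\hat\bv\mapsto\calP_{F_{\Kmac}}\hat\bv$ does not send $\bV_h$ into $\bVmac_h$, because the nodal constraints of the two spaces differ.

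To see this, pull both fields back to $\bar T$; both become $\calP_{F_{\bar K}}\hat\bv$ on each $\bar K$, so only the constraints matter.

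\begin{itemize}
\item In $\bVmac_h$, $\bPsimac|_{\bar T}$ is a single polynomial, so $\calM^{\Kmac}_a=\bPi_{\Tmac}(a)$ at nodes inside $\Tmac$. The constraint there is plain continuity of the pulled-back field across the interior Clough--Tocher edges of $\bar T$.
\item In $\bV_h$, $\Gamma_h$ is generally kinked along those edges. At such a node (with $K_a=K_1$) the constraint reads, pointwise at $\bar a$, $\bar\bv_{\bar K_2}(\bar a)=\calP_{\bPsict|_{\bar K_2}}^{-1}\calM_a^{K_2}\calP_{\bPsict|_{\bar K_1}}\bar\bv_{\bar K_1}(\bar a)$.
\item This matrix preserves the co-normal component; that is the $\bH({\rm div})$ property. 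But it rescales the edge-tangential component and mixes the co-normal component into it. The factors depend on $\bnu_{K_1}(a)\cdot\bnu_{K_2}(a)$ and on the one-sided transverse derivatives of $\bPsict$, so the matrix is not the identity.
\item The same kind of mismatch occurs across macro edges, where $\calM_a^K\neq\calM_{\amac}^{\Kmac}$ and $A_K\neq A_{\Kmac}$.
\end{itemize}

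Your justification that ``$\bPsict$ and $\bPsimac$ share Lagrange nodes'' does not repair this. They coincide only at the $k$th-degree nodes of the unrefined mesh $\bar\calT_h$, not at the velocity nodes $\bar\calN_h^{\rm ct}$. Even coincident node locations would not make the normals and Jacobians agree.

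Under coefficient matching, the $H^1$ equivalence you call the main obstacle is routine. The identity $\|\calP_{F_K}\hat\bv\|_{H^1(K)}\approx\|\hat\bv\|_{L^2(\hat T)}+h_K^{-1}|\hat\bv|_{H^1(\hat T)}$ follows from the bounds on $F_K$ and $F_{\Kmac}$ alone, and the pairing is preserved exactly. Your Step 2 would therefore use no closeness of $\bPsict$ and $\bPsimac$ at all, a sign that the real difficulty has been assumed away.

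What is missing is the content of Lemma~\ref{lem:PITA}:
\begin{itemize}
\item Define $\bv\in\bV_h$ from $\bvmac$ by matching nodal values on the selected elements. This gives $\hat\bv(\hat a)=A_K^\dagger(\hat a)({\bf I}_3+B_a)A_{\Kmac}(\hat a)\bvhatmac(\hat a)$ with $B_a=\calM_a^K(\calM_{\amac}^{\Kmac})^{-1}-{\bf I}_3$.
\item This correction is \emph{not} small: by Lemma~\ref{lem:B-Bound} it is $O(h^k)$-close to $-\bnu_K(a)\otimes\bnu_K(a)$. It becomes harmless only because $A_K^\dagger$ annihilates $\bnu_K$, which yields $|A_K^\dagger\hat B_K|_{W^{\ell,\infty}(\hat T)}\lesssim h_K^{2+\ell}$ for $k\ge2$ (Lemma~\ref{lem:AB_bound}).
\item Combined with $\|A_K-A_{\Kmac}\|_{W^{1,\infty}(\hat T)}\lesssim h_K^{k-1}$ (Lemma~\ref{lem:ATDiff}) and Bramble--Hilbert arguments, this gives an $O(h)$ perturbation in $H^1_h$. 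The divergence pairings then differ by $O(h)$, and $h$ must be taken small.
\end{itemize}

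Your proposed bound $\|\nab(\bPsict-\bPsimac)\|_{L^\infty}\lesssim h$ would not suffice here. The $W^{1,\infty}$ bound on $A_K-A_{\Kmac}$ involves second derivatives of $\bPsict-\bPsimac$, which must be $O(h)$; by inverse estimates this requires first-derivative closeness $O(h^2)$. Lemma~\ref{lem:PsiMac} provides $O(h^k)$ with $k\ge2$.

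Finally, Lemma~\ref{lem:DivFreeProp} only identifies the discrete divergence kernel. It gives no surjectivity of the divergence onto $Q_h$, so it cannot substitute for the inf-sup condition on coarse meshes.
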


\section{An auxiliary macro surface Scott-Vogelius finite element pair and
proof of Theorem \ref{thm:MainStab}}\label{sec-aux}
To prove the stability result in Theorem \ref{thm:MainStab}, 
we first establish an analogous stability result for 
auxiliary macro finite element spaces. Once this is achieved, we apply a perturbation argument to derive the desired estimate \eqref{equ:MainInfSup}.
These results also show that the auxiliary macro finite element pair provides a stable and divergence-free discretization of the surface Stokes problem and is therefore of independent interest. However, its practical implementation is relatively involved, as the reference basis functions of the finite element spaces are piecewise polynomials.

\subsection{Auxiliary high-order triangulation}
The triangulation $\calT_h^{\rm ct}$ is the high-order computational mesh
used in our proposed scheme, but we also introduce an auxiliary
high-order triangulation that will be utilized in the proof of 
Theorem \ref{thm:MainStab}.
Let $\bImac_h:{\bm C}(\bar \Gamma_h)\to \bbR^3$ denote the $k$th degree
Lagrange interpolant {\em with respect to the unrefined affine
triangulation} $\bar \calT_h$, and set 
\[
\bPsimac = \bImac_h \bPsict.
\]
Thus, $\bPsimac$ is a continuous, piecewise polynomial
mapping of degree $k$ with respect to $\bar \calT_h$ such that $\bPsimac(\bar a) = \bPsict(\bar a)$
at all $k$th degree Lagrange (nodal) degrees of freedom $\bar a$ in  $\bar \calT_h$.
We then let $\calTmac_h$ denote the {\em macro} mesh,
which consists of elements mapped through $\bPsimac$:
\[
\calTmac_h = \{\bPsimac(\bar T):\ \bar T\in \bar \calT_h\}.
\]
For $\bar T\in \bar \calT_h$ and $\Tmac=\bPsimac(\bar T)$, we define
$F_{\Tmac}:\hat T\to \Tmac$ such that $F_{\Tmac} = \bPsimac\circ F_{\bar T}$.
We also set
\[
\Gammamac_h = \bigcup_{\Tmac\in \calTmac_h} {\rm cl}(\Tmac).
\]
We denote by $\calVmac_h =\{\bPsimac(\bar a):\ \bar a\in \bar \calV_h\}$
and $\calEmac_h = \{\bPsimac(\bar e):\ \bar e\in \bar \calE_h\}$
the sets of vertices and edges in $\calTmac_h$, respectively. 
We denote by $\bnu_h^{\rm m}$ the outward unit
normal of $\Gammamac_h$, and for
for each $\Tmac\in \calTmac_h$, we set $\bnu_{\Tmac} = \bnu_h^{\rm m}|_{\Tmac}$.

Associated with the macro mesh $\calTmac_h$ is
the partition
\[
\calT^{\rm m,ct}_h:=\{\bPsimac(\bar K):\ \bar K\in \bar \calT_h^{\rm ct}\}.
\]
Similar to the notation above,
we set $\bnu_{K^{\rm m}} = \bnu_h^{\rm m}|_{K^{\rm m}}$.
Note that for each $K^{\rm m}$,
there exists a unique $\Tmac\in \calTmac_h$ such
that $K^{\rm m}\subset \Tmac$. Likewise, 
for each $\Tmac\in \calTmac_h$, there exists
$\{K^{\rm m}_1,K^{\rm m}_2,K^{\rm m}_3\}\subset \calT^{\rm m,ct}_h$ such
that ${\rm cl}(\Tmac) = {\rm cl}(K^{\rm m}_1)\cup {\rm cl}(K^{\rm m}_2)\cup {\rm cl}(K^{\rm m}_3)$.
We set, for each $\Kmac\in \calT^{\rm m,ct}_h$, $F_{\Kmac}:=\bPsimac\circ F_{\bar K}$ with $\Kmac = \bPsimac(\bar K)$.

If $\bPsict$ is the Lagrange interpolant
of $\bp$ (with respect to $\bar \calT_h^{\rm ct}$), then $\bPsimac$
is the Lagrange interpolant of $\bp$ with respect to $\bar \calT_h$,
and therefore satisfies bounds similar to \eqref{eqn:bPsiBounds}.
The next result shows that these bounds hold even if $\bPsict$
is not the Lagrange interpolant. Its proof is given in the appendix.
\begin{lemma}\label{lem:PsiMac}
There holds
\begin{align}\label{eqn:PsiMacApprox}
|\bp(x) - \bPsimac(x)| = \calO(h^{k+1}),\qquad |\nab \bp(x) - \nab \bPsimac(x)| = \calO(h^k),\qquad \text{on }\bar \Gamma_h,
\end{align}
and therefore
\begin{equation}
    \label{eqn:macNuGood}
    |\bnu-\bnu_h^{\rm m}| = \mathcal{O}(h^k).
\end{equation}
Consequently, by \eqref{eqn:bPsiBounds} we have
\begin{align}\label{eqn:PsiMacInterpProp}
|\bPsict - \bPsimac(x)| = \calO(h^{k+1}),\qquad |\nab \bPsict - \nab \bPsimac(x)| = \calO(h^k),\qquad \text{on }\bar \Gamma_h.
\end{align}
Moreover, for all $\Tmac\in \calTmac_h$ and $\calT^{\rm m,ct}_h\ni \Kmac\subset \Tmac$,
\begin{equation}\label{eqn:FTMacBounds}
\begin{aligned}
&|F_{\Tmac}|_{W^{\ell,\infty}(\hat T)}\lesssim h^\ell_T,\qquad |F_{\Tmac}^{-1}|_{W^{\ell,\infty}(\Tmac)}\lesssim h^{-1}_T,\qquad
&&\det(\nab F_{\Tmac}^\intercal \nab F_{\Tmac}) \approx h^4_T,\\
&|F_{\Kmac}|_{W^{\ell,\infty}(\hat T)}\lesssim h^\ell_K,\qquad |F_{\Kmac}^{-1}|_{W^{\ell,\infty}(\Kmac)}\lesssim h^{-1}_K,\qquad
&&\det(\nab F_{\Kmac}^\intercal \nab F_{\Kmac}) \approx h^4_K,
\end{aligned}
\end{equation}
where $h_T = {\rm diam}(\bPsimac^{-1}(\Tmac))$ and $h_K = {\rm diam}(\bPsimac^{-1}(\Kmac))$.
\end{lemma}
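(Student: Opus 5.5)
Fix $\bar T\in\bar\calT_h$. The key observation is that on $\bar T$ the map $\bPsimac|_{\bar T}$ is the $k$th degree Lagrange interpolant, over the unsplit triangle, of the piecewise polynomial $\bPsict|_{\bar T}$. I would compare both maps to $\bp$ through the interpolant $\bImac_h\bp$. The triangle inequality gives
\[
\bp-\bPsimac = (\bp-\bImac_h\bp) + \bImac_h(\bp-\bPsict).
\]
The first term is the standard interpolation error of the smooth map $\bp$ on the shape-regular affine triangle $\bar T$. By \eqref{eqn:FbarTJacobian} and the smoothness of $\bp$ in $U_\delta$, it satisfies $|\bp-\bImac_h\bp|\lesssim h^{k+1}$ and $|\nab(\bp-\bImac_h\bp)|\lesssim h^k$.

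For the second term, $\bImac_h$ on $\bar T$ depends only on nodal values. Together with the $L^\infty$ stability of Lagrange interpolation, this gives
\[
\|\bImac_h(\bp-\bPsict)\|_{L^\infty(\bar T)}\lesssim \|\bp-\bPsict\|_{L^\infty(\bar T)}\lesssim h^{k+1}
\]
by \eqref{eqn:bPsiBounds}. An inverse estimate for polynomials of degree $k$ on $\bar T$ then yields
\[
|\nab \bImac_h(\bp-\bPsict)|\lesssim h^{-1}h^{k+1}=h^k.
\]
This proves \eqref{eqn:PsiMacApprox}, and \eqref{eqn:PsiMacInterpProp} follows by combining it with \eqref{eqn:bPsiBounds} via the triangle inequality.

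For \eqref{eqn:macNuGood}, I would express $\bnu_h^{\rm m}$ at $\bPsimac(x)$ as the normalized cross product of the columns of $\nab\bPsimac\nab F_{\bar T}$. I would do the same for $\bnu\circ\bp$, using $\nab\bp\nab F_{\bar T}$: the range of $\nab\bp=\bPi-d{\bf H}$ is the tangent plane of $\gamma$ at $\bp(x)$. The cross products are $\calO(h^2)$ in size and nondegenerate of order $h^2$ by \eqref{eqn:FbarTJacobian}, and they differ by $\calO(h^{k+2})$. Since normalization is Lipschitz away from zero, the two unit normals differ by $\calO(h^k)$. The remaining mismatch of base points is $|\bnu(\bp(x))-\bnu(\bPsimac(x))|\lesssim h^{k+1}$, because $\bnu$ is smooth.

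For \eqref{eqn:FTMacBounds}, I would write $F_{\Tmac}=\bPsimac\circ F_{\bar T}$ with $F_{\bar T}$ affine. The chain rule then gives
\[
|F_{\Tmac}|_{W^{\ell,\infty}(\hat T)}\lesssim |\bPsimac|_{W^{\ell,\infty}(\bar T)}\,h_T^\ell .
\]
So it suffices to show $|\bPsimac|_{W^{\ell,\infty}(\bar T)}\lesssim 1$. For $\ell=1$ this follows from \eqref{eqn:PsiMacApprox}. For $\ell\ge2$ I would write $\bPsimac=\bImac_h\bp+\bImac_h(\bPsict-\bp)$. The first part has bounded derivatives by interpolation stability in $W^{\ell,\infty}$. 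For the second, inverse estimates give $h^{-\ell}h^{k+1}\lesssim 1$ for $\ell\le k+1$.

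The Jacobian bound comes from $\nab F_{\Tmac}=\nab\bPsimac\nab F_{\bar T}$ together with $\nab\bPsimac=\nab\bp+\calO(h^k)$. On the tangent plane of $\bar T$, the map $\nab\bp$ is uniformly nondegenerate, with singular values near $1$ for $h$ small, because $\bar\bnu_h\cdot\bnu\approx1$ and $d=\calO(h^2)$. Hence $\det(\nab F_{\Tmac}^\intercal\nab F_{\Tmac})\approx\det(\nab F_{\bar T}^\intercal\nab F_{\bar T})\approx h_T^4$.

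The bounds on $F_{\Tmac}^{-1}$ follow from this lower singular-value bound and the inverse function theorem. This requires that $\bPsimac|_{\bar T}$ be injective. That holds because $\bPsimac$ is a $W^{1,\infty}$-small perturbation of the bi-Lipschitz map $\bp|_{\bar\Gamma_h}$, for $h$ small. Higher derivatives of the inverse are handled by the usual formulas for derivatives of inverse maps. The $\Kmac$ bounds are identical, using $F_{\bar K}$ and \eqref{eqn:FbarKJacobian} in place of $F_{\bar T}$.

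The main obstacle is the second interpolation term. There $\bPsict$ is only piecewise smooth on $\bar T$, since it is polynomial on each $\bar K$ but not across the interior split edges. So I cannot use smooth interpolation error estimates for it directly. Reducing to nodal values of $\bp-\bPsict$ combined with inverse estimates is what circumvents this issue.
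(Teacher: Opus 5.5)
Your proposal is correct. For the derivative bounds on $F_{\Tmac}$ it follows the paper's argument: the chain rule through the affine $F_{\bar T}$, with $\bPsimac$ split into $\bImac_h\bp$ plus the interpolant of a quantity that is $\calO(h^{k+1})$ in $L^\infty$, which inverse estimates then control.

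For \eqref{eqn:PsiMacApprox} your route is leaner. The paper introduces the Clough--Tocher interpolant $\bIct_h$, writes $\bPsict=\bIct_h\bp+{\bm r}$, and uses the nested-node identity $\bImac_h\bIct_h=\bImac_h$. It then bounds $\bImac_h{\bm r}-{\bm r}$ by piecewise inverse estimates on ${\bm r}$ over the sub-triangles. Your term $\bImac_h(\bp-\bPsict)$ is in fact exactly $-\bImac_h{\bm r}$, since the nodal values agree. You control it directly from the $L^\infty$ stability of nodal interpolation, which holds for merely continuous functions. So you need neither $\bIct_h$, nor the fact that the degree-$k$ nodes of $\bar T$ lie among those of its Clough--Tocher split, nor the gradient part of \eqref{eqn:bPsiBounds}.

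You also supply arguments the paper leaves implicit. The paper asserts \eqref{eqn:macNuGood} with a bare ``therefore''. It also says the determinant and inverse-map bounds ``follow from the first'' estimate, but the lower bound on $\det(\nab F_{\Tmac}^\intercal\nab F_{\Tmac})$ needs more than upper bounds on derivatives. What is actually needed is what you give:
\begin{itemize}
\item the comparison of cross products for the normals;
\item the nondegeneracy of $\nab\bp$ on the plane of $\bar T$, which uses $\bnu\cdot\bar\bnu_{\bar T}\approx 1$ (a consequence of $d=\calO(h^2)$ and shape regularity);
\item injectivity of $\bPsimac|_{\bar T}$ as a $W^{1,\infty}$-small perturbation of $\bp$.
\end{itemize}

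One caveat applies equally to your argument and the paper's. The inverse estimate on $\bar T$ produces $h_T^{-\ell}h^{k+1}$, not $h^{k+1-\ell}$. So the stated local bounds in terms of $h_T$ tacitly assume quasi-uniformity, or an elementwise reading of \eqref{eqn:bPsiBounds} with $h_T$ in place of $h$.
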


Recall that we set $K = \bPsict(\bar K)\in \calT^{\rm ct}_h$ for $\bar K\in \bar \calT^{\rm ct}_h$,
and note that $K\neq \Kmac=\bPsimac(\bar T)\in \calTmac_h$ in general (see Figure \ref{fig:FancyFig}).
Nonetheless,
the following lemmas show that $\Kmac$ is an
$\calO(h^{k+1})$ perturbation of $K$. Their proofs are given
in the appendix.
\begin{figure}[h]
\caption{\label{fig:FancyFig}A depiction of the meshes and mappings. Left: a local Clough–Tocher triangulation obtained by mapping each subtriangle $\bar K_i$ to the curved surface. Right: the macro-element triangulation, defined via a single polynomial diffeomorphism on $\bar T$.}
\begin{tikzpicture}[scale=1.25, line cap=round, line join=round]

\tikzset{
  surf/.style={draw, thick},
  edge/.style={draw, thick},
  map/.style={->, thick, >=stealth},
  dashededge/.style={draw, dashed, gray},
  lab/.style={font=\small}
}

\coordinate (A) at (0,3.15);
\coordinate (B) at (4.05,4.95);
\coordinate (C) at (5.25,3.10);
\coordinate (D) at (2.95,3.80);

\draw[surf] (A) .. controls (1.25,4.55) and (2.85,4.85) .. (B)
           .. controls (5.10,4.55) and (5.55,3.75) .. (C)
           .. controls (3.55,2.95) and (1.35,2.85) .. (A);

\draw[edge] (A) .. controls (1.25,3.25) and (2.20,3.45) .. (D);
\draw[edge] (D) .. controls (3.65,3.55) and (4.50,3.25) .. (C);
\draw[edge] (D) .. controls (3.35,4.20) and (3.75,4.55) .. (B);

\node[lab] at (1.95,4.15) {$K_3$};
\node[lab] at (4.05,4) {$K_2$};
\node[lab] at (2.85,3.25) {$K_1$};
\node[lab] at (5.55,4.25) {$T$};

\coordinate (a) at (1.45,0.95);
\coordinate (b) at (4.75,1.45);
\coordinate (c) at (4.0,-0.5);

\coordinate (d) at (3.4,0.6333);

\draw[surf] (a) -- (b) -- (c) -- cycle;
\draw[edge] (a) -- (d);
\draw[edge] (b) -- (d);
\draw[edge] (c) -- (d);

\node[lab] at (2.95,0.3611) {$\overline K_1$};
\node[lab] at (4.05,0.52778) {$\overline K_2$};
\node[lab] at (3.2,1.011) {$\overline K_3$};

\draw[map] (2.6,0.4) .. controls (0.75,1.65) and (0.35,2.70) .. (1.45,3.15);
\node[lab] at (0.65,1.9) {$\bPsi_{\rm ct}|_{\overline K_1}$};

\draw[map] (4.25,0.55) .. controls (5.35,1.75) and (5.25,2.85) .. (4.05,3.65);
\node[lab] at (5.42,2.25) {$\bPsi_{\rm ct}|_{\overline K_2}$};

\draw[map] (3.45,1.10) .. controls (2.80,2.20) and (2.45,3.25) .. (2.35,4.15);
\node[lab] at (2.40,2.55) {$\bPsi_{\rm ct}|_{\overline K_3}$};


\coordinate (E) at (5.90,3.15);
\coordinate (F) at (9.45,4.95);
\coordinate (G) at (10.65,3.10);


\draw[surf] (E) .. controls (8.00,4.65) and (9.50,4.90) .. (F)
           .. controls (11.05,4.40) and (11.25,3.70) .. (G)
           .. controls (9.45,3.00) and (7.75,3.15) .. (E);

\draw[dashededge] (E) .. controls (7.80,3.85) and (8.40,3.90) .. (8.85,3.85);
\draw[dashededge] (8.85,3.85) .. controls (9.55,4.20) and (9.95,4.50) .. (F);
\draw[dashededge] (8.85,3.85) .. controls (9.75,3.45) and (10.35,3.25) .. (G);

\node[lab] at (9.95,4.) {$K_2^{\rm m}$};
\node[lab] at (8.55,4.25) {$K_3^{\rm m}$};
\node[lab] at (8.3,3.45) {$K_1^{\rm m}$};

\node[lab] at (11.55,4.25) {$T^{\rm m}$};

\coordinate (p) at (6.85,0.95);
\coordinate (q) at (10.15,1.45);
\coordinate (r) at (9.4,-0.5);

\draw[surf] (p) -- (q) -- (r) -- cycle;

\coordinate (pd) at (8.88,0.63333);
\draw[dashededge] (p) -- (pd);
\draw[dashededge] (pd) -- (q);
\draw[dashededge] (pd) -- (r);

\node[lab] at (8.65,-0.5) {$\overline T$};

\draw[map] (9.9,0.55) .. controls (11.70,1.60) and (11.85,3.00) .. (11.15,3.55);
\node[lab] at (11.15,2.25) {$\bPsimac$};

\end{tikzpicture}
\end{figure}

\begin{lemma}\label{lem:FDiff}
Let $\bar K\in \bar \calT^{\rm ct}_h$, $K = \bPsict(\bar K)$ and $\Kmac = \bPsimac(\bar K)$.
Then there holds
\begin{align*}
|F_K - F_{\Kmac}|_{W^{\ell,\infty}(\hat T)}\lesssim h^{k+1}_K\qquad \ell=0,1.
\end{align*}
\end{lemma}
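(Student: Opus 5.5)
We write $F_K - F_{\Kmac} = (\bPsict - \bPsimac)\circ F_{\bar K}$ on $\hat T$, since both maps share the affine factor $F_{\bar K}$. The estimate then reduces to pointwise bounds on $\bPsict-\bPsimac$ and its gradient on $\bar K$, which Lemma~\ref{lem:PsiMac} supplies in \eqref{eqn:PsiMacInterpProp}.

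For $\ell=0$ the argument is immediate. By \eqref{eqn:PsiMacInterpProp},
\[
\|F_K - F_{\Kmac}\|_{L^\infty(\hat T)} = \|\bPsict - \bPsimac\|_{L^\infty(\bar K)} \lesssim h^{k+1}.
\]
We then need to replace the global $h$ by the local $h_K$. We do this by local quasi-uniformity: the $\calO(h^{k+1})$ bounds in \eqref{eqn:bPsiBounds} and \eqref{eqn:PsiMacApprox} are really local interpolation-type bounds, so they hold with $h_{\bar T}\approx h_K$ on each macro element. Concretely, $\bPsimac|_{\bar T}$ is the Lagrange interpolant on $\bar T$ of the piecewise polynomial $\bPsict$, so $\bPsict - \bPsimac = (I-\bImac_h)(\bPsict - \bp) + (I-\bImac_h)\bp$ on $\bar T$. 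Here $(I-\bImac_h)\bp = \calO(h_T^{k+1})$ by standard interpolation of the smooth function $\bp$ on an affine element. Also $\bImac_h$ is stable in $L^\infty(\bar T)$, so $(I-\bImac_h)(\bPsict-\bp) = \calO(h_T^{k+1})$ as well, provided \eqref{eqn:bPsiBounds} is read locally.

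For $\ell=1$ we use the chain rule, $\nab(F_K - F_{\Kmac}) = \nab(\bPsict - \bPsimac)\circ F_{\bar K}\,\nab F_{\bar K}$. By \eqref{eqn:FbarKJacobian}, $|\nab F_{\bar K}|\lesssim h_K$, and by \eqref{eqn:PsiMacInterpProp}, $|\nab(\bPsict-\bPsimac)| = \calO(h^k)$. Together these give $|F_K-F_{\Kmac}|_{W^{1,\infty}(\hat T)}\lesssim h_K\cdot h_K^k = h_K^{k+1}$.

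The gradient bound on $\nab(\bPsict-\bPsimac)$ must hold piecewise on each $\bar K$, not only on $\bar T$, because $\bPsict$ is merely piecewise polynomial on the Clough--Tocher split. We obtain it by differentiating the decomposition above piecewise. The term $\nab(I-\bImac_h)\bp=\calO(h_T^k)$ is standard. For the other term, $\nab(\bPsict-\bp)=\calO(h^k)$ piecewise by \eqref{eqn:bPsiBounds}. In addition, $\nab\bImac_h(\bPsict-\bp)$ is bounded by $h_T^{-1}\|\bPsict-\bp\|_{L^\infty(\bar T)}\lesssim h^k$ via an inverse estimate on the polynomial $\bImac_h(\cdot)$.

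The main obstacle is this localization, i.e.\ making sure the constants in Lemma~\ref{lem:PsiMac} are uniform and local to $\bar T$. We intend to simply invoke Lemma~\ref{lem:PsiMac} as stated, under the standing shape-regularity convention $h\approx h_K$ locally.
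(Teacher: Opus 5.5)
Your proof is correct and is essentially the paper's argument. The paper also writes $F_K - F_{\Kmac} = (\bPsict - \bImac_h\bPsict)\circ F_{\bar K}$ and combines the chain rule, $|F_{\bar K}|_{W^{1,\infty}(\hat T)}\lesssim h_K$, and \eqref{eqn:PsiMacInterpProp}. The paper replaces the global $h$ by $h_K$ just as informally as you do. Strictly, that step needs a local reading of \eqref{eqn:bPsiBounds} (or quasi-uniformity), not shape regularity alone, and your local decomposition $(I-\bImac_h)(\bPsict-\bp)+(I-\bImac_h)\bp$ supplies exactly that.
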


\begin{lemma}\label{lem:ATDiff}
Set $A_K = \nab F_K/J_K$ and $A_{\Kmac} = \nab F_{\Kmac}/J_{\Kmac}$, where
$J_K = \sqrt{\nab F_K^\intercal \nab F_K}$ and 
$J_{\Kmac} = \sqrt{\nab F_{\Kmac}^\intercal \nab  F_{\Kmac}}$. 
Then there holds
\[
\|A_K - A_\Kmac\|_{W^{\ell,\infty}(\hat T)}\lesssim h^{k-1}_K\qquad \ell=0,1.
\]
\end{lemma}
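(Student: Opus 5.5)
The plan is to reduce everything to Lemma \ref{lem:FDiff}, the bounds \eqref{eqn:FKBounds} and \eqref{eqn:FTMacBounds}, and inverse estimates for polynomials on $\hat T$. Write $B := \nab F_K$ and $B^{\rm m}:=\nab F_{\Kmac}$, both $3\times 2$ matrices whose entries are polynomials of degree $\le k-1$ on $\hat T$. Lemma \ref{lem:FDiff} with $\ell=1$ gives $\|B-B^{\rm m}\|_{L^\infty(\hat T)}\lesssim h_K^{k+1}$. Since $F_K-F_{\Kmac}$ is a polynomial of fixed degree $k$ on the fixed reference element $\hat T$, an inverse estimate (equivalence of norms on $\bbP_k(\hat T)^3$) yields
\[
|F_K-F_{\Kmac}|_{W^{2,\infty}(\hat T)}\lesssim \|F_K-F_{\Kmac}\|_{W^{1,\infty}(\hat T)}\lesssim h_K^{k+1}.
\]
Hence $\|B-B^{\rm m}\|_{W^{1,\infty}(\hat T)}\lesssim h_K^{k+1}$. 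Note that we cannot gain $h$ powers from inverse estimates here, because $\hat T$ is fixed.

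Next, control the $2\times 2$ metric matrices $G=B^\intercal B$ and $G^{\rm m}=(B^{\rm m})^\intercal B^{\rm m}$. By \eqref{eqn:FKBounds} and \eqref{eqn:FTMacBounds}, $\|B\|_{W^{1,\infty}}+\|B^{\rm m}\|_{W^{1,\infty}}\lesssim h_K$. The product rule then gives
\[
\|G-G^{\rm m}\|_{W^{1,\infty}(\hat T)}\lesssim h_K\cdot h_K^{k+1}=h_K^{k+2}.
\]
Both $G$ and $G^{\rm m}$ are symmetric positive definite with eigenvalues $\approx h_K^2$. This follows from $|B\xi|\gtrsim h_K|\xi|$, a consequence of $|F_K^{-1}|_{W^{1,\infty}}\lesssim h_K^{-1}$, and $\det G\approx h_K^4$. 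Interpreting $J_K$ as the matrix square root $G^{1/2}$ (the scalar reading $\sqrt{\det G}$ goes through identically), we also have $J_K^{-1}=O(h_K^{-1})$. Scaling $\tilde G = h_K^{-2}G$ and $\tilde G^{\rm m}=h_K^{-2}G^{\rm m}$ puts both matrices in a fixed compact set of positive definite matrices, with $\|\tilde G-\tilde G^{\rm m}\|_{W^{1,\infty}}\lesssim h_K^{k}$ and $\|\tilde G\|_{W^{1,\infty}}\lesssim 1$. The map $M\mapsto M^{-1/2}$ is smooth (in particular Lipschitz with Lipschitz derivative) on that set. Applying the chain rule and mean-value theorem to $x\mapsto \tilde G(x)^{-1/2}-\tilde G^{\rm m}(x)^{-1/2}$ therefore gives $W^{1,\infty}$ control of the difference of order $h_K^{k}$, together with $\|\tilde G^{-1/2}\|_{W^{1,\infty}}\lesssim 1$. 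Undoing the scaling,
\[
\|J_K^{-1}-J_{\Kmac}^{-1}\|_{W^{1,\infty}}\lesssim h_K^{k-1}, \qquad \|J_K^{-1}\|_{W^{1,\infty}}\lesssim h_K^{-1}.
\]

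Finally, decompose
\[
A_K-A_{\Kmac}=(B-B^{\rm m})J_K^{-1}+B^{\rm m}(J_K^{-1}-J_{\Kmac}^{-1}),
\]
and estimate each term in $W^{1,\infty}(\hat T)$ via the product rule. The first term is bounded by $h_K^{k+1}\cdot h_K^{-1}=h_K^{k}$, and the second by $h_K\cdot h_K^{k-1}=h_K^{k}$. This gives $O(h_K^{k})$, which implies the stated $h_K^{k-1}$ bound since $h_K\lesssim 1$.

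The main obstacle is the derivative of the inverse square root. The smooth-matrix-function argument requires uniform positive definiteness of the scaled metrics at every point of $\hat T$, which must be extracted from \eqref{eqn:FKBounds} and \eqref{eqn:FTMacBounds} (the lower bound on the smallest singular value via $F^{-1}$ Lipschitz bounds), not merely from the determinant bound. That lower bound is what keeps $\tilde G$ and $\tilde G^{\rm m}$ in a compact set where $M\mapsto M^{-1/2}$ has uniformly bounded derivatives.
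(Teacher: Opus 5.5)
Your argument is correct. Your final splitting $A_K-A_{\Kmac}=(B-B^{\rm m})J_K^{-1}+B^{\rm m}(J_K^{-1}-J_{\Kmac}^{-1})$ is the same one the paper uses for the $L^\infty$ part. The difference lies in how $J^{-1}$ and its derivative are controlled.

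\textbf{The paper's route.} It works by hand with the scalar $J_K=\sqrt{\det C_K}$. It bounds $J_K^{-1}-J_{\Kmac}^{-1}$ using $|a^{-1/2}-b^{-1/2}|\le|a-b|/(2\min\{a,b\}^{3/2})$ and the cofactor identity for $\det C_K-\det C_{\Kmac}$. For the seminorm it expands $\nab(\nab F/J)$ into a second-derivative term $I_1$ and a $\nab J$ term $I_2$. The latter needs $\|\nab J_{\Kmac}\|_{L^\infty(\hat T)}\lesssim h_K^3$ and $\|\nab\det C_{\Kmac}\|_{L^\infty(\hat T)}\lesssim h_K^5$, imported from \cite[Appendix C]{DemlowNeilan25}.

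\textbf{Your route.} You rescale the metric into a fixed compact set where $M\mapsto M^{-1/2}$ (or $M\mapsto(\det M)^{-1/2}$) is smooth. That set is convex, so the mean-value step is legitimate. Both the $L^\infty$ and $W^{1,\infty}$ difference bounds then follow from one chain-rule step, with derivative bounds on $G$ read off from $|F_K|_{W^{2,\infty}(\hat T)}\lesssim h_K^2$. You also make explicit the inverse estimate $|F_K-F_{\Kmac}|_{W^{2,\infty}(\hat T)}\lesssim h_K^{k+1}$. The paper uses this tacitly: Lemma \ref{lem:FDiff} is stated only for $\ell=0,1$, yet both $I_1$ and the $W^{1,\infty}$ bound on $C_K-C_{\Kmac}$ need second derivatives of $F_K-F_{\Kmac}$. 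Your route is shorter, self-contained, and extends directly to higher $W^{\ell,\infty}$ norms; the paper's is more explicit.

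\textbf{One numerical claim needs fixing.} The paper uses the scalar reading: $A_K$ is the Piola matrix, of size $h_K^{-1}$, as in $|A_K|_{W^{m,\infty}(\hat T)}\lesssim h_K^{m-1}$ in the proof of Lemma \ref{lem:AB_bound}. In that reading the numbers do not go through ``identically.'' Since $J_K\approx h_K^2$, your rescaling gives $\|J_K^{-1}\|_{W^{1,\infty}(\hat T)}\lesssim h_K^{-2}$ and $\|J_K^{-1}-J_{\Kmac}^{-1}\|_{W^{1,\infty}(\hat T)}\lesssim h_K^{k-2}$. Your decomposition then yields $h_K^{k+1}h_K^{-2}+h_K\,h_K^{k-2}=h_K^{k-1}$, exactly the stated exponent and not $h_K^{k}$. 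The extra power of $h_K$ you found is an artifact of the matrix-square-root reading, where $A_K$ is the orthonormal polar factor of $\nab F_K$ and has size $O(1)$.

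Finally, the lower eigenvalue bound is not really an obstacle. For $2\times 2$ SPD matrices, $\lambda_{\min}=\det G/\lambda_{\max}\gtrsim h_K^4/h_K^2$, and in the scalar reading only $\det G\approx h_K^4$ is needed.
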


\subsection{Local macro spaces}
Let
 $\bbP_r(\hat T^{ct})$ be the space
of (discontinuous) piecewise polynomials of degree $\le r$
with respect to the reference Clough-Tocher split $\hat T^{ct}$,
and set $\bbP_r^c(\hat T^{ct}) = \bbP_r(\hat T^{ct})\cap H^1(\hat T)$
to be the analogous continuous piecewise polynomial space.
For $\bar T\in \bar \calT_h$ and $\Tmac\in \calTmac_h$ related via
$\Tmac = \bPsimac(\bar T)$, we define the local spaces
\begin{alignat*}{3}
\bVhatmac:&=[\bbP^c_r(\hat T^{ct})]^2,\qquad
&&\bVmac(\bar T):=\{\calP_{F_{\bar T}} \hat \bw:\ \hat \bw\in \bVhatmac\},\qquad
&&\bVmac(\Tmac):=\{\calP_{F_{\Tmac}} \hat \bw:\ \hat \bw\in \bVhatmac\},\\
\Qhatmac:&=\bbP_{r-1}(\hat T^{ct}),\qquad
&&\Qmac(\bar T):= \{\hat r\circ F_{\bar T}^{-1}:\ \hat r\in \Qhatmac\},\qquad
&&\Qmac(\Tmac):=\{\hat r\circ F_{\Tmac}^{-1}:\ \hat r\in \Qhatmac\}.
\end{alignat*}
Because $F_{\bar T}$ is affine, $\calP_{F_{\bar T}}$ can be identified
with a constant $3\times 2$ matrix, and thus $\bVmac(\bar T)$ simply
consists of (continuous) tangential piecewise polynomials
of degree $r$ with respect to the local Clough-Tocher split of $\bar T$. 
Further note that $\calP_{F_{\Tmac}} = \calP_{\bPsimac\circ F_{\bar T}} = \calP_{\bPsimac} \calP_{F_{\bar T}}$,
and so we have an equivalent definition of $\bVmac(\Tmac)$ and $\Qmac(\Tmac)$: 
\begin{align*}
\bVmac(\Tmac) &= \{\calP_{\bPsimac} \bar \bw:\ \bar \bw\in \bVmac(\bar T)\},\\
\Qmac(\Tmac) & = \{\bar r\circ \bPsimac^{-1}:\ \bar r\in \Qmac(\bar T)\}.
\end{align*}
We also define the spaces with homogeneous Dirichlet boundary conditions:
\begin{alignat*}{3}
\bVhatmac_0:&= \bVhatmac\cap \bH^1_0(\hat T),\qquad
&&\bVmac_0(\bar T):=\{\calP_{F_{\bar T}} \hat \bw:\ \hat \bw\in \bVhatmac_0\},\qquad
&&\bVmac_0(\Tmac):=\{\calP_{F_{\Tmac}} \hat \bw:\ \hat \bw\in \bVhatmac_0\},\\
\Qhatmac_0:&= \Qhatmac\cap L^2_0(\hat T),\qquad
&&\Qmac_0(\bar T):= \{\hat r\circ F_{\bar T}^{-1}:\ \hat r\in \Qhatmac_0\},\qquad
&&\Qmac_0(\Tmac):=\{\hat r\circ F_{\Tmac}^{-1}:\ \hat r\in \Qhatmac_0\}.
\end{alignat*}

The following result essentially follows from \cite[Theorem 3.9]{NeilanOtus21}.
For completeness, we provide its proof.
\begin{lemma}\label{lem:LocalMacroStable}
Given $\Tmac\in \calTmac_h$ and $r\in \Qmac_0(\Tmac)$, there exists $\bw\in \bVmac_0(\Tmac)$ such that
\[
{\rm div}_{\Gammamac_h} \bw=\frac{h^2_Tr}{\sqrt{\det((\nab F_{\Tmac}^\intercal \nab F_{\Tmac})\circ F_{ \Tmac}^{-1})}}\quad \text{in }\Tmac,\quad \text{and}\quad \|\bw\|_{H^1(\Tmac)}\lesssim \|r\|_{L^2(\Tmac)}.
\]
\end{lemma}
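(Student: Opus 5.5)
Pulling back to the reference triangle turns this into the planar Clough--Tocher Scott--Vogelius stability result. Write $r = \hat r\circ F_{\Tmac}^{-1}$ with $\hat r\in \Qhatmac_0$. The planar result (e.g.\ \cite{GuzmanNeilan18,QinThesis}) says that for $r\ge 2$ the divergence $\hat\nab\cdot:\bVhatmac_0\to \Qhatmac_0$ is surjective. This is a statement on a single fixed reference configuration, so it comes with a right inverse whose norm is a fixed constant. Thus there is $\hat\bw\in\bVhatmac_0$ with $\hat\nab\cdot\hat\bw = \hat r$ and $\|\hat\bw\|_{H^1(\hat T)}\le C\|\hat r\|_{L^2(\hat T)}$, with $C$ depending only on $r$.

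Next we set $\bw = \calP_{F_{\Tmac}}\hat\bw \cdot c$, with a scalar $c$ to be fixed by the scaling below. The Piola transform vanishes on $\partial\Tmac$ because $\hat\bw$ vanishes on $\partial\hat T$, and $\bw$ is tangential by construction, so $c\,\bw$ lies in $\bVmac_0(\Tmac)$. By the divergence identity of the Piola transform recalled in Section~\ref{sec-prelim}, with $\mu = \sqrt{\det(\nab F_{\Tmac}^\intercal\nab F_{\Tmac})}$ the area ratio,
\[
{\rm div}_{\Gammamac_h}\calP_{F_{\Tmac}}\hat\bw = \big(\mu^{-1}\hat\nab\cdot\hat\bw\big)\circ F_{\Tmac}^{-1} = \frac{r}{\sqrt{\det((\nab F_{\Tmac}^\intercal\nab F_{\Tmac})\circ F_{\Tmac}^{-1})}}.
\]
Taking $c = h_T^2$ gives exactly the stated divergence. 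The scaling is chosen so that the Jacobian factor $\approx h_T^2$ cancels.

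It remains to prove $\|\bw\|_{H^1(\Tmac)}\lesssim\|r\|_{L^2(\Tmac)}$, and this is the only delicate step. The difficulty is that $F_{\Tmac}$ is a degree-$k$ polynomial map and not affine, so the scaling argument must control its higher derivatives. The bounds \eqref{eqn:FTMacBounds} of Lemma~\ref{lem:PsiMac} supply this control. We write $h_T^2\calP_{F_{\Tmac}}\hat\bw = \big(h_T^2\mu^{-1}\nab F_{\Tmac}\hat\bw\big)\circ F_{\Tmac}^{-1}$ and use the following facts:
\begin{itemize}
\item $h_T^2\mu^{-1}\approx 1$;
\item $|\nab F_{\Tmac}|\lesssim h_T$ and $|\nab^2F_{\Tmac}|\lesssim h_T^2$;
\item by the chain rule, derivatives of $\mu^{-1}$ are bounded by $h_T^{-2}$ in $W^{1,\infty}(\hat T)$;
\item $|F_{\Tmac}^{-1}|_{W^{1,\infty}}\lesssim h_T^{-1}$.
\end{itemize}
With these, the pointwise bounds are $|h_T^2\calP\hat\bw|\lesssim h_T|\hat\bw|$ and $|\nab(h_T^2\calP\hat\bw)|\lesssim h_T^{-1}\cdot h_T(|\hat\bw|+|\hat\nab\hat\bw|)$. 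Changing variables multiplies by area $\approx h_T^2$, so
\[
\|\bw\|_{H^1(\Tmac)}\lesssim h_T\|\hat\bw\|_{H^1(\hat T)}\lesssim h_T\|\hat r\|_{L^2(\hat T)}\approx \|r\|_{L^2(\Tmac)}.
\]
The last equivalence uses $\|r\|_{L^2(\Tmac)}^2\approx h_T^2\|\hat r\|_{L^2(\hat T)}^2$. Combining the divergence identity with this bound completes the proof.
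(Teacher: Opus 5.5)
Your proposal is correct and follows the paper's own proof: write $r=\hat r\circ F_{\Tmac}^{-1}$ with $\hat r\in\Qhatmac_0$, invoke surjectivity of the divergence on the reference Clough--Tocher split, apply the Piola transform $\calP_{F_{\Tmac}}$, and conclude with a scaling argument. The only difference is cosmetic: you multiply by $h_T^2$ after the Piola transform, whereas the paper solves $\widehat{\rm div}\,\hat\bw=h_T^2\hat r$ on $\hat T$; this is immaterial by linearity. You also write out the scaling estimate $\|\calP_{F_{\Tmac}}\hat\bw\|_{H^1(\Tmac)}\lesssim h_T^{-1}\|\hat\bw\|_{H^1(\hat T)}$, which the paper only asserts.
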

\begin{proof}
    Let $r\in \Qmac_0(\Tmac)$ be arbitrary. Then, there exists $\hat{r}\in \Qhatmac_0$ such that $r=\hat{r} \circ F_{\Tmac}^{-1}$. It follows that $h^2_T\hat{r}\in \Qhatmac_0$, and so by the surjectivity 
    of the divergence operator on the local Clough-Tocher split $\hat{T}^{ct}$ (cf.~\cite{GuzmanNeilan18}), there exists $\hat{\bw}\in \bVhatmac_0$ such that $\widehat{\rm div}\, \hat{\bw}=h_T^2\hat{r}$ and $\|\hat{\bw}\|_{H^1(\hat{T})}\lesssim \|h_T^2\hat{r}\|_{L^2(\hat{T})}$. Letting $\bw = \calP_{F_{\Tmac}}\hat{\bw}$, we have
    \begin{align*}
    {\rm div}_{\Gammamac_h} \bw=\frac{\widehat{\rm div} \hat{\bw}}{\sqrt{\det(\nab F_{\Tmac}^\intercal \nab F_{\Tmac})}}\circ F_{\Tmac}^{-1}
    &=\frac{h^2_T\hat{r}}{\sqrt{\det(\nab F_{\Tmac}^\intercal \nab F_{\Tmac})}}\circ F_{\Tmac}^{-1}
    =\frac{h^2_Tr}{\sqrt{\det(\nab F_{\Tmac}^\intercal \nab F_{\Tmac})\circ F_{\Tmac}^{-1}}}.
    \end{align*}
Finally, we make a change of variables to obtain
    \[
    \|\bw\|_{H^1(\Tmac)}\lesssim h^{-1}_T\|\hat{\bw}\|_{H^1(\hat{T})}\lesssim h_T\|\hat{r}\|_{L^2(\hat{T})}\lesssim \|r\|_{L^2(\Tmac)}. 
    \]
\end{proof}

\subsection{Auxiliary global macro finite element spaces}
Similar to the space $\bV_h$, defined on the Clough-Tocher triangulation,
we define  the global auxiliary Scott-Vogelius velocity
space as the space of functions which
are locally in $\bVmac(\Tmac)$ for each $\Tmac\in \calTmac_h$
and have weak continuity properties
at the degrees of freedom. 
Let 
\[
\calN_h^{\rm m,ct} = \{\bPsimac(\bar a):\ \bar a\in \bar \calN_h^{\rm ct}\}
\]
denote the locations of the $r$th-degree Lagrange degrees of freedom
on $\calT^{\rm m,ct}_h$.
For each $a\in \calN_h^{\rm m,ct}$, 
let $\calT^{\rm m,ct}_a \subset \calT^{\rm m,ct}_h$ be the set of elements
abutting $a$. We then set 
\[
K^{\rm m}_a= \bPsimac(\bPsict^{-1}(K_{\bPsict(\bPsimac^{-1}(a))}))
\]
as the selected element associated with the DOF $a$, that is,
the selected element corresponds to the selected element in the 
triangulation $\calT_h^{\rm ct}$.

Similar to \eqref{eqn:calMaK}, we define the operator ($a\in \calN_h^{\rm m,ct}$)
\begin{align}
\calM_a^{K^{\rm m}} {\bm x} = \left((\bnu_{K^{\rm m}_a}(a)\cdot \bnu_{K^{\rm m}}(a)){\bf I}_3 - \bnu_{K^{\rm m}_a}(a) \otimes \bnu_{K^{\rm m}}(a)\right) {\bm x}.
\end{align}
Note that if $a\in {\rm int}(\Tmac)$ for some $\Tmac\in \calTmac_h$,
then the outward unit normal at $a$ is single-valued
because $\bPsimac|_{\bar T}$ is a $C^\infty$ mapping (with $\Tmac = \bPsimac(\bar T)$).
Thus in this case $\bnu_{K^{\rm m}_a}(a) =\bnu_{K^{\rm m}}(a)$,
and so $\calM_a^{K^{\rm m}} = \bPi_{\Tmac}(a)$. In particular,
$\calM_a^{K^{\rm m}} {\bm x} = {\bm x}$ for all $\bx$
tangent to $\Gammamac_h$ at $a$.

The global macro spaces are given by
\begin{equation}
\label{eqn:GlobalMacSpaces}
\begin{split}
\bVmac_h & = \{\bw\in \bL^2(\Gammamac_h):\ \bw|_\Tmac\in \bVmac(\Tmac)\ \forall \Tmac\in \calTmac_h,\\ &\qquad\bw|_{K^{\rm m}}(a) = \calM_a^{K^{\rm m}}(\bw|_{K^{\rm m}_a}(a))\ \forall K^{\rm m}\in \calT^{\rm m,ct}_a,\ \forall a\in \calN^{\rm m,ct}_h\},\\
\Qmac_h & = \{r\in L^2_0(\Gammamac_h):\ r|_\Tmac \in \Qmac(\Tmac)\ \forall \Tmac\in \calTmac_h\}.
\end{split}
\end{equation}

\begin{remark}
Similar to the space $\bV_h$ (cf.~Remark \ref{rem:HdivRemark}),
there holds $\bVmac_h\subset \bH({\rm div}_{\Gammamac_h};\Gammamac_h)$
due to the arguments in 
\cite[Lemma 3.5]{DemlowNeilan25}. Furthermore, by
the preceding paragraph, there holds the following equivalent definition of
$\bVmac_h$, defined with with respect to the triangulation $\calT_h^{\rm m}$:
\begin{equation}\label{eqn:VmacAlt}
\begin{split}
\bVmac_h & = \{\bw\in \bL^2(\Gammamac_h):\ \bw|_\Tmac\in \bVmac(\Tmac)\ \forall \Tmac\in \calTmac_h,\\ &\qquad\bw|_{T^{\rm m}}(a) = \calM_a^{T^{\rm m}}(\bw|_{T^{\rm m}_a}(a))\ \forall T^{\rm m}\in \calT^{\rm m}_a,\ \forall a\in \calN^{\rm m}_h\},
\end{split}
\end{equation}
where $ \calN^{\rm m}_h = \{\bPsimac(\bar a):\ \bar a\in \bar \calN_h\}$,
$\bar \calN_h$ is the set of $r$th-degree degrees of freedom with respect to $\bar \calT_h$,
$\Tmaca\in \calT_h^{\rm m}$ is the unique element with $K^{\rm m}_a\subset \Tmaca$,
and $\calT_a^{\rm m}$ are the elements in $\calT_h^{\rm m}$ abutting $a$.
Similar to Assumption \ref{assume:GL},
we assume that the edge DOFs in $\bar \calN_h$ correspond
to the $(r+1)$-point Gauss-Lobatto integration rule.
The original definition \eqref{eqn:GlobalMacSpaces} will be used in the perturbation argument in Section \ref{sec:Correspond}, while the alternative definition \eqref{eqn:VmacAlt} will be used in the stability analysis of the pair $\bVmac_h\times Q_h^{\rm m}$. 
\end{remark}

\subsection{Inf-sup stability for auxiliary macro pair}
Similar to the Euclidean setting (cf.~\cite{GuzmanNeilan18}),
we prove stability of $\bVmac_h\times \Qmac_h$ via a macro-element
technique, which relies on a local inf-sup stability result
and the stability of the $\bbP_r-\bbP_0$ pair to ``glue'' the local
results together. The local stability result has been established
in Lemma \ref{lem:LocalMacroStable}, so we now focus
on the inf-sup stability of a surface analogue of the $\bbP_r-\bbP_0$ pair. 

We define
\begin{equation}
\label{eqn:TildeSpacesDef}
\begin{split}
\bVtildemac_h &= \{\bw\in \bL^2(\Gammamac_h):\ \bw|_{\Tmac} = \calP_{F_\Tmac} \hat \bw\ \exists \hat{\bw} \in [\bbP_r(\hat T)]^2;\\ &\qquad\bw|_\Tmac(a) = \calM^{\Tmac}_a (\bw|_{\Tmaca}(a))\ \forall T\in \calTmac_a,\ \forall a\in  \calN^{\rm m}_h\},\\
\Qtildemac_h & = \{r\in L^2_0(\Gamma_h):\ r|_{\Tmac} \in \mathbb{P}_0(\Tmac)\ \forall \Tmac\in \calTmac_h\}.
\end{split}
\end{equation}

\begin{remark}
Because $[\bbP_r(\hat T)]^2\subset \bVhatmac$, we easily see from \eqref{eqn:VmacAlt} that the inclusion
$\bVtildemac_h\subset \bVmac_h$ holds. 
\end{remark}

Continuing, we state an approximation property of $\bVtildemac_h$.
The proof of the next  lemma can be found in 
\cite[Lemma 3.9]{DemlowNeilan25}.
%
\begin{lemma}\label{lem:Interp2}
For each $\bz\in \bH^1_T(\gamma)$,
there exists $\bI_{h,1}  \bz\in \bVtildemac_h$ such that
\begin{align*}
\|\ipt\bz - \bI_{h,1} \bz\|_{H^\ell(\Tmac)}\lesssim h^{1-\ell} \|\bz\|_{H^1(\bp(\omega'_{T^{\rm m}}))}\ (\ell=0,1)\qquad \forall \Tmac\in \calTmac_h,
\end{align*}
where $\ipt \bz = \calP_{\bp^{-1}} \bz$ and $\omega'_{\Tmac} = \mathop{\cup_{S\in \calTmac_h}}_{{\rm cl}(S)\cap {\rm cl}(\Tmac)\neq \emptyset} \mathop{\cup_{S'\in \calTmac_h}}_{{\rm cl}(S')\cap {\rm cl}(S)\neq \emptyset} S'$ 
is the two-layered local patch around $\Tmac$.
\end{lemma}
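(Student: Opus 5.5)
The plan is to follow the proof of \cite[Lemma 3.9]{DemlowNeilan25}, adapted to the macro mesh $\calTmac_h$ and its mapping $\bPsimac$; the only new inputs are the geometric bounds of Lemma~\ref{lem:PsiMac}. The construction has three steps.

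\emph{Step 1 (regularize on the exact surface).} Given $\bz\in\bH^1_T(\gamma)$, I will not interpolate $\ipt\bz$ pointwise at the nodes, since $\bz$ is only $H^1$. Instead, for each node $a\in\calN^{\rm m}_h$ I will define a nodal value by a Scott--Zhang type averaging. Fix the selected element $\Tmaca$ and an edge or face $\sigma_a\subset{\rm cl}(\Tmaca)$ containing $a$, and let $\hat\psi_a$ be the usual $L^2$-dual basis function on the reference configuration. Set
\[
\bx_a:=\int_{\sigma_a}(\ipt\bz)|_{\Tmaca}\,\psi_a.
\]
This vector is (approximately) tangent to $\Tmaca$ at $a$; to make it exactly so, I project it with $\bPi_{\Tmaca}(a)$. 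Then on every $\Tmac\in\calTmac_a$ I prescribe the nodal value $\calM_a^{\Tmac}\bx_a$. By construction this satisfies the gluing conditions in \eqref{eqn:TildeSpacesDef}, so the resulting piecewise function $\bI_{h,1}\bz$ lies in $\bVtildemac_h$. Concretely, on each $\Tmac$ the reference coefficients $\hat\bw$ are recovered through $(\calP_{F_\Tmac})^{-1}$ applied to the nodal values.

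\emph{Step 2 (local stability).} On each $\Tmac$, I will bound $\|\bI_{h,1}\bz\|_{H^\ell(\Tmac)}$ by $\|\ipt\bz\|_{H^1}$ over a one-layer patch $\omega_{\Tmac}$. This uses the trace inequality on $\sigma_a$, the mapping bounds \eqref{eqn:FTMacBounds}, $|\calM^{\Tmac}_a|\lesssim1$, and the norm equivalence between $\ipt\bz$ and $\bz$ on corresponding pieces. The last point follows from \eqref{eqn:macNuGood} in the same way as \cite[(2.11)]{DemlowNeilan25}, with $\Gamma_h$ replaced by $\Gammamac_h$.

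\emph{Step 3 (approximation).} Since $\ipt\bz$ is not in the local space, I will compare against a local approximant $\bw_\omega$ that is a Piola-mapped constant on a patch. Choosing a suitable constant tangent field on $\bp(\omega)$ and applying $\ipt$ gives an element that is reproduced by the operator up to the $\calM$-mismatch. Bramble--Hilbert on $\omega'_{\Tmac}$ then yields $h^{1-\ell}\|\bz\|_{H^1}$.

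The main obstacle is this last reproduction step. The nodal values across neighbouring $\Tmac$ differ by $\calM_a^{\Tmac}-{\bf I}_3$ acting on tangent vectors, and $\ipt$ of a smooth field is only polynomial up to geometric error. I would control the $\calM$ discrepancy using $|\bnu_{\Tmaca}(a)-\bnu_{\Tmac}(a)|\lesssim h^k$ together with $|\calM_a^{\Tmac}\bx-\bx|\lesssim h^k|\bx|$. I would also invoke the equivalence $\|\bw\|_{L^2(\Tmac)}\approx h\,|$nodal values$|$ for functions in the local space. Both perturbations then contribute $\lesssim h\|\bz\|_{H^1}$ in $L^2$ and $\lesssim\|\bz\|_{H^1}$ in $H^1$, which is enough for $k\ge1$. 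This is why the argument of \cite{DemlowNeilan25} carries over verbatim once \eqref{eqn:PsiMacApprox}--\eqref{eqn:FTMacBounds} are available. The two-layer patch $\omega'_{\Tmac}$ arises because the averaging sets $\sigma_a$ can lie in elements neighbouring $\Tmac$.
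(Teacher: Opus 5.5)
Your outline is correct and follows the paper's route: the paper proves this lemma simply by invoking \cite[Lemma 3.9]{DemlowNeilan25} on the macro mesh $\calTmac_h$. Your observation is what justifies that transfer: the only new inputs are Lemma~\ref{lem:PsiMac} and the $\Gammamac_h$-version of the Piola norm equivalence, and the mismatch $|\calM_a^{\Tmac}\bx-\bx|\lesssim h^k|\bx|$ for $\bx$ tangent to $\Tmaca$ at $a$ is harmless when $k\ge 1$.

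One wording fix: there is no constant tangent field on a curved patch, so use $\bPi\bld{c}$ with $\bld{c}$ the mean of $\bz$ over $\bp(\omega_{\Tmac})$. Your operator reproduces this field only up to a pointwise $\calO(h|\bld{c}|)$ averaging/interpolation error on top of the $\calO(h^k|\bld{c}|)$ gluing mismatch, both within your budget since $|\bld{c}|\lesssim h^{-1}\|\bz\|_{L^2(\bp(\omega_{\Tmac}))}$; your construction in fact needs only the one-layer patch $\omega_{\Tmac}\subset\omega'_{\Tmac}$.
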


\begin{lemma}\label{lem:P2P0}
There exists $\tilde \beta^{\rm m}>0$ independent of $h$ such that
\begin{align}\label{eqn:P2P0infsup}
\tilde \beta^{\rm m} \|r\|_{L^2(\Gammamac_h)}\le \sup_{\bw\in \bVtildemac_h\backslash \{0\}} \frac{\int_{\Gammamac_h} ({\rm div}_{\Gammamac_h} \bw) r}{\|\bw\|_{H^1_h(\Gammamac_h)}}\qquad \forall r\in \Qtildemac_h.
\end{align}
\end{lemma}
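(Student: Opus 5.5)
We prove the $\bbP_r$--$\bbP_0$ inf-sup condition \eqref{eqn:P2P0infsup} by a Fortin-type argument that transfers the continuous inf-sup condition on $\gamma$ to $\Gammamac_h$. We use the interpolant $\bI_{h,1}$ of Lemma~\ref{lem:Interp2}, corrected so that edge fluxes are preserved.

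\textbf{Step 1: lift the pressure and invert the continuous divergence.} Fix $r\in\Qtildemac_h$. Lift it to $\gamma$ through the closest point projection, $r^\ell$. Since $r$ has vanishing mean on $\Gammamac_h$, $r^\ell$ has only an $\calO(h^{k+1})$-small mean on $\gamma$, because $\mu_h^{\rm m}-1=\calO(h^{k+1})$. Let $\bar r^\ell := r^\ell-|\gamma|^{-1}\int_\gamma r^\ell$. By the continuous surface inf-sup condition there is $\bz\in\bH^1_T(\gamma)$ with
\[
{\rm div}_\gamma \bz=\bar r^\ell,\qquad \|\bz\|_{H^1(\gamma)}\lesssim\|r\|_{L^2(\Gammamac_h)}.
\]
Pull $\bz$ back with the Piola transform $\ipt\bz=\calP_{\bp^{-1}}\bz$, taken with respect to $\bp|_{\Gammamac_h}$. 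The divergence-preserving identity then gives
\[
\int_{\Gammamac_h}({\rm div}_{\Gammamac_h}\ipt\bz)\,r=\int_\gamma \bar r^\ell\, r^\ell=\|r^\ell\|^2_{L^2(\gamma)}-\calO(h^{2k+2})\|r\|^2,
\]
which is bounded below by $c\|r\|^2_{L^2(\Gammamac_h)}$.

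\textbf{Step 2: construct a discrete velocity with the same edge fluxes.} Start from $\bI_{h,1}\bz\in\bVtildemac_h$, which is $H^1_h$-stable by Lemma~\ref{lem:Interp2}. For each edge $e\in\calEmac_h$, add an edge correction $\bw_e$ that adjusts the normal flux. Build $\bw_e$ as the Piola image of a quadratic (hence degree $\le r$) edge bubble in the two adjacent elements, with its value at the interior edge DOFs chosen co-normal. It must respect the gluing via $\calM_a^{\Tmac}$; this holds because $\bVtildemac_h$ is defined by DOF constraints, so it suffices to prescribe the value on $\Tmaca$ and propagate it by $\calM_a$. Choose the coefficient so that
\[
\int_e (\bw-\ipt\bz)\cdot\bn_e=0,\qquad \bw=\bI_{h,1}\bz+\sum_e\bw_e.
\]
Since $\bVtildemac_h\subset\bH({\rm div})$, integration by parts on each $\Tmac$ with $r$ piecewise constant gives
\[
\int_{\Gammamac_h}({\rm div}_{\Gammamac_h}\bw)r=\sum_{\Tmac} r|_{\Tmac}\int_{\p\Tmac}\bw\cdot\bn=\int_{\Gammamac_h}({\rm div}_{\Gammamac_h}\ipt\bz)r.
\]
Applied to $\ipt\bz$, the same identity needs $\ipt\bz$ to have a single-valued co-normal flux across edges of $\Gammamac_h$. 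The Piola transform guarantees this.

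\textbf{Step 3: stability of the correction (main obstacle).} We must show $\|\bw_e\|_{H^1}\lesssim h^{-1}\|\ipt\bz-\bI_{h,1}\bz\|_{L^2(\omega_e)}+|\ipt\bz-\bI_{h,1}\bz|_{H^1(\omega_e)}$. This uses a scaled trace inequality on curved elements, which is available from the bounds \eqref{eqn:FTMacBounds}. It also requires the flux of the bubble to be bounded below by $c\,h_e$ times its coefficient, uniformly in $h$. The difficulty is that the co-normals from the two sides of a curved edge differ, and the $\calM_a$ gluing slightly rotates the DOF values. I would handle this by computing on the reference element: the normals of adjacent elements agree up to $\calO(h)$ at shared DOFs, so the flux functional is an $\calO(h)$ perturbation of the affine one, which is nondegenerate. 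Summing over edges with finite overlap and applying Lemma~\ref{lem:Interp2} gives $\|\bw\|_{H^1_h}\lesssim\|\bz\|_{H^1(\gamma)}\lesssim\|r\|$. Combined with Step 1, this yields \eqref{eqn:P2P0infsup} for $h$ sufficiently small. For the finitely many coarse meshes, a compactness argument or simply shrinking $\tilde\beta^{\rm m}$ covers the remaining cases.
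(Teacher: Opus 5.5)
Your proposal follows essentially the same Fortin-type route as the paper. You invert ${\rm div}_\gamma$ for the lifted pressure, start from $\bI_{h,1}\bz$, and correct the interior edge DOFs so that the edge fluxes of $\ipt\bz$ are reproduced. You then use co-normal continuity and the piecewise constant $r$, and bound the correction by a trace inequality plus Lemma~\ref{lem:Interp2}. One small difference is on the pressure side: the paper takes the divergence target $(\mu_h^{-1}r)^\ell$, which has exactly zero mean and gives $\int_{\Gammamac_h}({\rm div}_{\Gammamac_h}\ipt\bz)r=\|r\|^2_{L^2(\Gammamac_h)}$ without a mean correction.

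The only real difference is the edge correction. You use a scalar multiple of a co-normal bubble, whose flux must be shown nondegenerate by a perturbation argument for small $h$. The paper instead adds the same vector $\alpha|\bar e|^{-1}\calP_{\bPsimac}\int_{\bar e}\calP_{\bPsimac^{-1}}(\ipt\bz-\bI_{h,1}\bz)$ to every interior edge DOF, with $\alpha$ the reciprocal of the sum of the interior Gauss--Lobatto weights. It then obtains exact flux equality for every $h$ from the exactness of the $(r+1)$-point Gauss--Lobatto rule on the pulled-back affine edge.
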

\begin{proof}
Fix $r\in \Qtildemac_h$, and let $\bz\in \bH^1_T(\gamma)$
satisfy ${\rm div}_\gamma \bz = (\mu_h^{-1} r)^\ell$
and $\|\bz\|_{H^1(\gamma)}\lesssim \|(\mu_h^{-1} r)^\ell\|_{L^2(\gamma)}\lesssim \|r\|_{L^2(\Gamma_h)}$.
Let $\bI_{h,1} \bz\in \bVtildemac_h$ be the approximation 
of $\ipt \bz = \calP_{\bp^{-1}} \bz$ defined in Lemma \ref{lem:Interp2}.

By the definition of $\bVtildemac_h$ \eqref{eqn:TildeSpacesDef},
any $\bw\in \bVtildemac_h$ is uniquely
determined by the values $\bw_{\Tmaca}(a)$ 
for all $a\in \calN_h^{\rm m}$, where we recall
$\calN_h^{\rm m}$ is the set of $\bPsimac$-mapped
$r$th-degree Lagrange nodes with respect to $\bar \calT_h$.
For each $e\in \calEmac_h$, we let $\{b_{e,j}\}_{j=1}^{r-1}\subset \calN_h^{\rm m}$ denote
the $r$th-degree Lagrange degrees of freedom
that lie in the interior of $e$. Likewise, for each $\Tmac\in \calT_h^{\rm m}$,
we set $\{c_{\Tmac,j}\}_{j=1}^{M_r}\subset \calN_h^{\rm m}$ to denote the $r$th degree Lagrange degrees of freedom that lie in the interior of $\Tmac$,
where $M_r = \frac12(r-1)(r-2)$. The Lagrange degrees of freedom 
for $\bVtildemac_h$ then has the decomposition
\[
\calN_h^{\rm m} = \calV_h^{\rm m}\cup \{b_{e,j}:\ e\in \calE_h^{\rm m},\ 1\le j\le r-1\}
\cup \{c_{\Tmac,j}:\ \Tmac\in \calT_h^{\rm m},\ 1\le j\le M_r\}.
\]
Next, let $\{\hat \omega_j\}_{j=0}^r$ be the weights
of the (closed) $(r+1)$-point Gauss-Lobatto integration rule on 
the interval $[0,1]$, where $\omega_0$ is the weight
for the node $0$, and $\omega_{r}$ is the weight
for the node $1$. We then set
\begin{equation}\label{eqn:AlphaDef}
\alpha:=\left(\sum_{j=1}^{r-1} \omega_j\right)^{-1} =\calO(1),
\end{equation}
which is well-defined because all of the Gauss-Lobatto weights
are positive.

We uniquely determine $\bw\in \bVtildemac_h$ such that
\begin{subequations}
\label{eqn:vAssign}
\begin{alignat}{2}
\label{eqn:vAssign1}
\bw|_{T^{\rm m}_a}(a)&=(\bI_{h,1}\bz)|_{T^{\rm m}_a}(a)\qquad &&\forall a\in \calVmac_h,\\
\label{eqn:vAssign2}
\bw|_{T^{\rm m}_{b_{e,j}}}(b_{e,j})&=(\bI_{h,1}\bz)|_{T^{\rm m}_{b_{e,j}}}(b_{e,j})\\
&\qquad \nonumber+\alpha |\bar e|^{-1}\calP_{\bPsimac|_{\bar T_{b_{e,j}}}(\bar{b}_{e,j})}\int_{\bar{e}}\calP_{\bPsimac^{-1}}\left(\ipt\bz-{\bI_{h,1}\bz}\right)|_{\bar T_{b_{e,j}}} \quad &&\forall e\in \calEmac_h,\ j=1,\ldots,r-1,\\
\bw(c_{\Tmac,j})& = (\bI_{h,1}\bz)(c_{\Tmac,j})\qquad &&\forall \Tmac\in \calT_h^{\rm m},\ j=1,\ldots,M_r.
\end{alignat}
\end{subequations}
Here, $\bar{T}_{b_{e,j}} = \bPsimac^{-1}(T^{\rm m}_{b_{e,j}})$ 
 $\bar e = \bPsimac^{-1}(e)$, and $\bar b_{e,j} = \bPsimac^{-1}(b_{e,j})$.
Note that the right-hand sides of \eqref{eqn:vAssign} are tangent
to $\Gammamac_h$ at their respective points,
and therefore $\bw$ is well-defined. Furthermore, because $\bI_{h,1} \bz\in \bVtildemac_h$, the
first assignment \eqref{eqn:vAssign1} implies, by the definition of $\bVtildemac_h$,
\begin{align*}
\bw|_{\Tmac}(a)& = \calM_a^{T^{\rm m}}(\bw|_{T^{\rm m}_a}(a))
= \calM_a^{T^{\rm m}}((\bI_{h,1}\bz)|_{T^{\rm m}_a}(a))
= (\bI_{h,1} \bz)|_{T^{\rm m}}(a)\qquad \forall \Tmac\in \calTmac_a,\ \forall a\in \calVmac_h.
\end{align*}

Set $\bar \bw = \calP_{\bPsimac^{-1}} \bw$, 
$\overline{\bI_{h,1} \bz} = \calP_{\bPsimac^{-1}} (\bI_{h,1} \bz)$, and $\bar{\ipt \bz} = \calP_{\bPsimac^{-1}} \ipt \bz$
so that
\begin{subequations}
\label{eqn:vbarAssign}
\begin{alignat}{2}
\label{eqn:vbarAssign1}
\bar\bw|_{\bar{T}}(\bar a)&=(\overline{\bI_{h,1}\bz})|_{\bar{T}}(\bar a)\qquad &&\forall \bar T\in \bar \calT_{\bar a},\ \forall \bar a\in \bar \calV_h,\\
\bar \bw|_{\bar T_{b_{e,j}}}(\bar b_{e,j})&=(\overline{\bI_{h,1}\bz})|_{\bar T_{b_{e,j}}}(\bar b_{e,j})\\
&\nonumber\qquad+\alpha|\bar e|^{-1}\int_{\bar{e}}\left(\bar{\ipt\bz}-\overline{\bI_{h,1}\bz}\right)|_{\bar T_{b_{e,j}}} \qquad &&\forall \bar e\in \bar \calE_h,\ j=1,\ldots,r-1,\\
\bar \bw(\bar c_{\Tmac,j})& = (\overline{\bI_{h,1}\bz})(\bar c_{\Tmac,j})\qquad &&\forall \bar T\in \bar \calT_h,\ j=1,\ldots,M_r.
\end{alignat}
\end{subequations}
Because $\bw,\bI_{h,1}\bz,\ipt\bz \in \bH({\rm div}_{\Gammamac_h};\Gammamac_h)$,
there holds $\bar \bw, \overline{\bI_{h,1} \bz}, \overline{\ipt \bz}\in \bH({\rm div}_{\bar \Gamma_h};\bar \Gamma_h)$ by properties of the Piola transform.
In particular, these functions have co-normal continuity across edges,
and so for any co-normal $\bar \bn_e$ at $\bar e\in \bar \calE_h$, there holds
\begin{align}\label{eqn:vbarAssign2B}
(\bar \bw|_{\bar T}\cdot \bar \bn_e)(\bar b_{e,j})&=(\overline{\bI_{h,1}\bz}|_{\bar T}\cdot \bar \bn_e)
(\bar b_{e,j})+\alpha|\bar e|^{-1}\int_{\bar{e}}\left(\bar{\ipt\bz}-\overline{\bI_{h,1}\bz}\right)|_{\bar T}\cdot \bar \bn_e \quad \forall \bar e\in \bar \calE_h,\ j=1,\ldots,r-1,
\end{align}
for any $\bar T\in \bar \calT_h$ with $\bar e\subset \p T$.
Consequently, because $\bar \bw_{\bar T} \cdot \bar \bn_e$
and $(\overline{\bI_{h,1}\bz})_{\bar T}\cdot \bar \bn_e$ 
are polynomials of degree $\le r$ on $\bar e$,
we have by the Gauss-Lobatto integration rule, \eqref{eqn:vbarAssign1}, \eqref{eqn:vbarAssign2B}, and \eqref{eqn:AlphaDef},
\begin{align*}
\int_{\bar e} \bar \bw|_{\bar T} \cdot \bar \bn_e
 = \int_{\bar e}(\overline{\bI_{h,1}\bz})|_{\bar T}\cdot \bar \bn_e+\sum_{j=1}^{r-1} \alpha \omega_j
 \int_{\bar e} \left(\bar{\ipt\bz}-\overline{\bI_{h,1}\bz}\right)|_{\bar T}\cdot \bar \bn_e
 = \int_{\bar e} \bar{\ipt\bz}|_{\bar T} \cdot \bar \bn_e.
\end{align*}
Thus,
\begin{align*}
\int_{\p \bar T} \bar \bw\cdot  \bn_{\p \bar T} = \int_{\p \bar T} 
\bar{\ipt \bz} \cdot \bn_{\p \bar T}\qquad \forall \bar T\in \bar \calT_h,
\end{align*}
where $\bn_{\p \bar T}$ is the outward unit co-normal of $\bar T$.
Using properties of the Piola transform once again, along with the divergence theorem, we find (with $\Tmac = \bPsimac(\bar T)$)
\begin{align*}
\int_{\Tmac} {\rm div}_{\Gammamac_h} \bw = \int_{\p \Tmac} \bw\cdot \bn_{\p \Tmac}
= \int_{\p \bar T} \bar \bw\cdot \bn_{\p \bar T}  = \int_{\p \bar T} 
\bar{\ipt \bz} \cdot \bn_{\p \bar T} = \int_{\p \Tmac} \ipt \bz \cdot \bn_{\p \Tmac} = \int_\Tmac {\rm div}_{\Gammamac_h} \ipt \bz.
\end{align*}
Using that $r$ is piecewise constant, we conclude
\begin{equation}\label{eqn:Divvq}
\begin{split}
\int_{\Gammamac_h} ({\rm div}_{\Gammamac_h} \bw)r
& = \sum_{\Tmac\in \calTmac_h} r|_{\Tmac} \int_{\Tmac} {\rm div}_{\Gammamac_h} \bw\\
& = \sum_{\Tmac\in \calTmac_h} r|_{\Tmac} \int_{\Tmac} {\rm div}_{\Gammamac_h} \ipt \bz\\
& = \int_{\Gammamac_h} ({\rm div}_{\Gammamac_h} \ipt \bz) r
= \int_\gamma ({\rm div}_\gamma \bz)r^\ell  = \int_\gamma \mu_h^{-1} |r^\ell|^2 = \|r\|_{L^2(\Gammamac_h)}^2.
\end{split}
\end{equation}

Next, we make a change of variables and 
a standard scaling argument to obtain
\begin{align}\label{eqn:vboundStart}
\|\bw- \bI_{h,1} \bz\|_{H^1(\Tmac)}^2
&\lesssim \|\bar \bw - \overline{\bI_{h,1} \bz} \|_{H^1(\bar T)}^2
\lesssim \sum_{\bar e \subset \p \bar T} \sum_{j=1}^{r-1} |(\bar \bw - \overline{\bI_{h,1} \bz})_{\bar T}(\bar b_{e,j})|^2,
\end{align}
where we used that $\bar \bw  = \overline{\bI_{h,1} \bz}$
at vertices and interior DOFs of the affine mesh (cf.~\eqref{eqn:vbarAssign}).

Let $e$ be a fixed (curved) edge 
of $\Tmac$, and let $\bar e = \bPsimac^{-1}(e)$ be the corresponding
affine edge. Using the definition of the finite element space 
$ \bVtildemac_h$, we have
\begin{align*}
(\bar{\bw}-\overline{\bI_{h,1}\bz})|_{\bar{T}}(\bar b_{e,j})
&= \calP_{\bPsimac^{-1}(\bar b_{{e,j}})}\left((\bw-\bI_{h,1}\bz)|_{\Tmac}(b_{e,j})\right)\\
&=\calP_{\bPsimac^{-1}(\bar b_{{e,j}})}\calM_{b_{e,j}}^{\Tmac}\left(\bw-(\bI_{h,1}\bz)|_{T^{\rm m}_{b_{e,j}}}(b_{e,j}) \right),
\end{align*}
and therefore,
\begin{align}\label{eqn:vwTe} 
\left|(\bar{\bw}-\overline{\bI_{h,1}\bz})|_{\bar {T}}(\bar b_{e,j}) \right|
&\leq \|\calP_{\bPsimac^{-1}(\bar b_{e,j})} \| \cdot \|\calM_{b_{e,j}}^{\Tmac} \| \cdot |(\bw-\bI_{h,1}\bz)|_{T^{\rm m}_{b_{e,j}}}(b_{e,j}) |.
\end{align}

From \cite[eqn.~(2.6)]{DemlowNeilan25}, we have that $\|\calP_{\bPsimac^{-1}(\bar b_{{e,j}})}\|\lesssim 1 $, and we clearly have $\|\calM_{b_{e,j}}^T\|\lesssim 1$. Inserting
these estimates into \eqref{eqn:vwTe} and using \eqref{eqn:vbarAssign}
yields
\begin{align*}
\left|(\bar{\bw}-\overline{\bI_{h,1}\bz})|_{\bar {T}}(\bar b_{e,j}) \right|
&\lesssim |(\bw-\bI_{h,1}\bz)|_{T^{\rm m}_{b_{e,j}}}(b_{e,j}) |
=\alpha |\bar{e}|^{-1}\left|\int_{\bar{e}}(\bar{\ipt\bz}-\overline{\bI_{h,1}\bz})|_{\overline{T}_{b_{e,j}}} \right|
%
\lesssim h_T^{-1/2} \|(\overline{\ipt\bz}-\overline{\bI_{h,1}\bz})|_{\overline{T}_{b_{e,j}}}\|_{L^2(\bar e)}.
\end{align*}
We then apply standard trace estimates
and Lemma \ref{lem:Interp2} to obtain
\begin{align*}
\left|(\bar{\bw}-\overline{\bI_{h,1}\bz})|_{\bar {T}}(\bar b_{e,j}) \right|
&\lesssim h_T^{-1}\big\|\bar{\ipt\bz}-\overline{\bI_{h,1}\bz}\big\|_{L^2(\bar T_{b_{e,j}})}
+\big\|\bar{\ipt\bz}-\overline{\bI_{h,1}\bz}\big\|_{H^1(\bar T_{b_{e,j}})}\\ 
 &\lesssim h_T^{-1}\big\|\ipt\bz-\bI_{h,1}\bz\big\|_{L^2(T^{\rm m}_{b_{e,j}})}+\big\|\ipt\bz-\bI_{h,1}\bz\big\|_{H^1(T^{\rm m}_{b_{e,j}})}\\
&\lesssim \|\bz\|_{H^1(\bp(\omega_{T_{b_{e,j}}^{\rm m}}')}
\lesssim \|\bz\|_{H^1(\bp(\omega_{T^{\rm m}}^{\prime\prime}))},
\end{align*}
where $\omega_{T^{\rm m}}^{\prime\prime}$ is a three-layered local patch
around $\Tmac$.

Applying this estimate to \eqref{eqn:vboundStart}
and once again using Lemma \ref{lem:Interp2}, we get
\begin{align*}
\|\bw\|_{H^1(\Tmac)}\lesssim \|\bz\|_{H^1(\bp(\omega_{T^{\rm m}}^{\prime\prime})}.
\end{align*}
Summing over $T\in \calTmac_h$, we then conclude $\|\bw\|_{H^1_h(\Gammamac_h)}\lesssim \|\bz\|_{H^1(\gamma)}\lesssim \|r\|_{L^2(\Gammamac_h)}$,
which combined with \eqref{eqn:Divvq} yields
the desired inf-sup condition \eqref{eqn:P2P0infsup}.
\end{proof}

The local stability result
given in Lemma \ref{lem:LocalMacroStable}, combined with the stability
of the $\bbP_r-\bbP_0$ pair, shows
that the auxiliary pair $\bVmac_h\times \Qmac_h$
is inf-sup stable.

\begin{theorem}\label{thm:MacStable}
There exists $\beta_{\rm m}>0$ independent
of the discretization parameter $h$ such that
\begin{align}\label{eqn:Macroinfsup}
 \beta_{\rm m} \|r\|_{L^2(\Gammamac_h)}\le \sup_{\bw\in \bVmac_h\backslash \{0\}} \frac{\int_{\Gammamac_h} ({\rm div}_{\Gammamac_h} \bw) r}{\|\bw\|_{H^1_h(\Gammamac_h)}}\qquad \forall r\in \Qmac_h.
\end{align}
\end{theorem}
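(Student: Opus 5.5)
We argue by the standard macro-element technique (cf.~\cite{GuzmanNeilan18}), combining the local stability of Lemma~\ref{lem:LocalMacroStable} with the $\bbP_r$--$\bbP_0$ stability of Lemma~\ref{lem:P2P0}. Fix $r\in \Qmac_h$. We decompose it as $r = r_0 + r_1$, where $r_0$ is the piecewise constant weighted mean on each macro element, and $r_1 := r - r_0$ has vanishing $\Tmac$-mean with respect to the weight that makes $r_1|_{\Tmac}\in \Qmac_0(\Tmac)$. Concretely, we take $r_0|_{\Tmac} := |\Tmac|^{-1}\int_{\Tmac} r$. This gives $r_0\in \Qtildemac_h$, since $\int_{\Gammamac_h} r_0 = \int_{\Gammamac_h} r = 0$. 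The local mean of $r_1$ on $\Tmac$ is then zero in the physical measure. Pulled back via $F_{\Tmac}$, it has zero mean against $\sqrt{\det(\nab F_{\Tmac}^\intercal\nab F_{\Tmac})}$, not the plain $\hat T$-mean. I would handle this by defining $r_0$ through the reference mean instead, i.e., $r_1 = (\hat r - \fint_{\hat T}\hat r)\circ F_{\Tmac}^{-1}$, so that $r_1|_{\Tmac}\in\Qmac_0(\Tmac)$ exactly. The global zero-mean of $r_0$ then fails only by a small amount, and we correct it by subtracting a global constant $c$. Using $r_0 - c\in \Qtildemac_h$ and absorbing $c$ by orthogonality is harmless, since $\int_{\Gammamac_h} {\rm div}\,\bw\, c=0$ for $\bw\in \bH({\rm div})$. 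Both parts are bounded: $\|r_0\|+\|r_1\|\lesssim \|r\|$.

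For the local part, on each $\Tmac$ Lemma~\ref{lem:LocalMacroStable} gives $\bw_1|_{\Tmac}\in \bVmac_0(\Tmac)$ with ${\rm div}\,\bw_1 = h_T^2 r_1/\sqrt{\det(\cdots)\circ F_{\Tmac}^{-1}}$. Extending by zero yields $\bw_1\in \bVmac_h$; the vanishing traces satisfy the gluing constraints trivially. By \eqref{eqn:FTMacBounds}, the weight $h_T^2/\sqrt{\det}$ is bounded above and below by constants, so $\int ({\rm div}\,\bw_1) r_1 \gtrsim \|r_1\|^2$ and $\|\bw_1\|_{H^1_h}\lesssim \|r_1\|$. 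For the constant part, Lemma~\ref{lem:P2P0} and the inclusion $\bVtildemac_h\subset\bVmac_h$ yield $\bw_0$ with $\int ({\rm div}\,\bw_0) r_0 \ge \tilde\beta^{\rm m}\|r_0\|\,\|\bw_0\|_{H^1_h}$; we normalize so that $\|\bw_0\|_{H^1_h}=\|r_0\|$.

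Finally, we set $\bw = \bw_0 + \delta \bw_1$. The cross term $\int ({\rm div}\,\bw_1) r_0$ vanishes, because $r_0$ is constant on each $\Tmac$ and $\bw_1$ has zero trace there, so $\int_{\Tmac}{\rm div}\,\bw_1=0$. The other cross term $\int ({\rm div}\,\bw_0) r_1$ is bounded by $\|r_0\|\,\|r_1\|$. With $\delta$ chosen large (a fixed constant) and Young's inequality, we get $\int ({\rm div}\,\bw)r\gtrsim \|r_0\|^2+\|r_1\|^2 \gtrsim \|r\|^2$ and $\|\bw\|_{H^1_h}\lesssim\|r\|$, which proves \eqref{eqn:Macroinfsup}.

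The main obstacle is the mean-value bookkeeping: Lemma~\ref{lem:LocalMacroStable} is stated for the reference-mean-zero space $\Qmac_0(\Tmac)$ with a Jacobian weight in the divergence, so the split of $r$ must be chosen consistently with this. The cleanest fix is to apply the lemma to the reference-mean-free part. The leftover piecewise constant still lies in $\Qtildemac_h$ modulo a global constant, and that constant is invisible to divergences of $\bH({\rm div})$-conforming fields on the closed surface. It remains to check that the weight $h_T^2/\sqrt{\det}$ is uniformly equivalent to $1$, which does not spoil positivity in the local part.
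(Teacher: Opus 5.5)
Your proposal is correct and is essentially the paper's argument. Both split $r$ into its elementwise reference (Jacobian-weighted) mean, shifted by a global constant to land in $\Qtildemac_h$, plus a remainder in $\Qmac_0(\Tmac)$. Both handle these two parts by Lemma~\ref{lem:P2P0} and Lemma~\ref{lem:LocalMacroStable} respectively, and both use the fact that the local zero-trace fields are orthogonal to piecewise constants.

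The only difference is cosmetic. You combine the two fields into the single test function $\bw_0+\delta\bw_1$ via Young's inequality, whereas the paper bounds $\|r-\tilde r_0\|_{L^2(\Gammamac_h)}$ and then $\|r_0\|_{L^2(\Gammamac_h)}$ by the supremum in two successive steps. In either version the global constant needs only the trivial bound $\|c\|_{L^2(\Gammamac_h)}\le\|r_1\|_{L^2(\Gammamac_h)}$, which follows from $\int_{\Gammamac_h} r=0$. You should state this bound explicitly when you pass from $\|r_0-c\|^2+\|r_1\|^2$ to $\|r\|^2$.
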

\begin{proof}
The result essentially follows directly from Lemmas \ref{lem:LocalMacroStable} and \ref{lem:P2P0}, together with the arguments given in \cite[Theorem 4.4]{NeilanOtus21}, \cite[Theorem 4.9]{DurstNeilan24}, and \cite[Proposition 6.1]{GuzmanNeilan18}. For completeness, we provide the proof.

Let $r\in \Qmac_h$, and let $\tilde r_0$ be the piecewise constant function with respect to $\calT_h^{\rm m}$ that satisfies
\[
\int_{\hat T} (r-\tilde r_0)\circ F_{\Tmac} = \int_{\Tmac} J_{\Tmac}^{-1} (r-\tilde r_0)=0\qquad \forall \Tmac\in \calT_h^{\rm m} ,
\]
where $J_{\Tmac} = \sqrt{\det((\nab F_{\Tmac}^\intercal \nab F_{\Tmac})\circ F_{\Tmac}^{-1})}$.
Then $(r-\tilde r_0)|_{\Tmac}\in Q_0^{\rm m}(\Tmac)$ for each $\Tmac\in \calT_h^{\rm m}$,
and so, by Lemma \ref{lem:LocalMacroStable}, there exists $\bw_{0,\Tmac}\in \bV_0^{\rm m}(\Tmac)$ such
that ${\rm div}_{\Gamma_h^{\rm m}} \bw_{0,\Tmac} = h_T^2 (r-\tilde r_0)/J_{\Tmac}$ and $\|\bw_{0,\Tmac}\|_{H^1(\Tmac)}\lesssim \|r-\tilde r_0\|_{L^2(\Tmac)}$.
We then set $\bw_0\in \bV_h^{\rm m}$ such that $\bw_0|_{\Tmac} = \bw_{0,\Tmac}$ for all $\Tmac\in \calT_h^{\rm m}$. 
Then $\|\bw_0\|_{H^1_h(\Gammamac_h)}\lesssim \|r-\tilde r_0\|_{L^2(\Gammamac_h)}$ and
\begin{align*}
 \sup_{\bw\in \bVmac_h\backslash \{0\}} \frac{\int_{\Gammamac_h} ({\rm div}_{\Gammamac_h} \bw) r}{\|\bw\|_{H^1_h(\Gammamac_h)}}
  &\ge   \frac{\int_{\Gammamac_h} ({\rm div}_{\Gammamac_h} \bw_0) (r-\tilde r_0)}{\|\bw_0\|_{H^1_h(\Gammamac_h)}}
=   \frac{\sum_{\Tmac\in \calT_h^{\rm m}} h_T^2 \int_{\Tmac}|r-\tilde r_0|^2/J_T}{\|\bw_0\|_{H^1_h(\Gammamac_h)}} \ge \beta_{{\rm m},0} \|r-\tilde r_0\|_{L^2(\Gamma^{\rm m}_h)}
\end{align*}
for some constant $\beta_{{\rm m},0}$ independent of the mesh parameter $h$.

Next, we set
\[
\gamma:=|\Gammamac_h|^{-1}\int_{\Gammamac_h} \tilde r_0 = |\Gammamac_h|^{-1}\sum_{\Tmac\in \calT_h^{\rm m}} \int_{\Tmac} \left(J^{-1}_{\Tmac}-1\right)(r-\tilde r_0)
\]
so that $r_0:=\tilde r_0-\gamma\in \Qtildemac_h$ and $\|\gamma\|_{L^2(\Gammamac_h)}\lesssim h^2 \|r-\tilde r_0\|_{L^2(\Gamma)}$. Therefore by Lemma \ref{lem:P2P0},
\begin{align*}
\beta_{{\rm m},1} \|r_0\|_{L^2(\Gammamac_h)}
&\le  \sup_{\bw\in \bVtildemac_h\backslash \{0\}} \frac{\int_{\Gammamac_h} ({\rm div}_{\Gammamac_h} \bw) r_0}{\|\bw\|_{H^1_h(\Gammamac_h)}}
\le  \sup_{\bw\in \bVmac_h\backslash \{0\}} \frac{\int_{\Gammamac_h} ({\rm div}_{\Gammamac_h} \bw) r_0}{\|\bw\|_{H^1_h(\Gammamac_h)}}\\
&\le \sup_{\bw\in \bVmac_h\backslash \{0\}} \frac{\int_{\Gammamac_h} ({\rm div}_{\Gammamac_h} \bw) r}{\|\bw\|_{H^1_h(\Gammamac_h)}}+\|r-r_0\|_{L^2(\Gammamac_h)}\\
&\lesssim \sup_{\bw\in \bVmac_h\backslash \{0\}} \frac{\int_{\Gammamac_h} ({\rm div}_{\Gammamac_h} \bw) r}{\|\bw\|_{H^1_h(\Gammamac_h)}}+(1+h^2)\|r-\tilde r_0\|_{L^2(\Gammamac_h)}\\
&\lesssim (1+\beta_{{\rm m},0}^{-1})\sup_{\bw\in \bVmac_h\backslash \{0\}} \frac{\int_{\Gammamac_h} ({\rm div}_{\Gammamac_h} \bw) r}{\|\bw\|_{H^1_h(\Gammamac_h)}}.
\end{align*}

Therefore 
\begin{align*}
\|r\|_{L^2(\Gammamac_h)}
&\le \|r-\tilde r_0\|_{L^2(\Gammamac_h)}+\|\tilde r_0\|_{L^2(\Gammamac_h)}\\
&\le \|r-\tilde r_0\|_{L^2(\Gammamac_h)}+\|r_0\|_{L^2(\Gammamac_h)}+\|\gamma\|_{L^2(\Gammamac_h)}\\
&\lesssim \|r-\tilde r_0\|_{L^2(\Gammamac_h)}+\|r_0\|_{L^2(\Gammamac_h)}\\
&\lesssim \left(\beta_{{\rm m},0}^{-1} +\beta_{{\rm m},1}^{-1} (1+\beta_{{\rm m},0}^{-1})\right)\sup_{\bw\in \bVmac_h\backslash \{0\}} \frac{\int_{\Gammamac_h} ({\rm div}_{\Gammamac_h} \bw) r}{\|\bw\|_{H^1_h(\Gammamac_h)}}.
\end{align*}
\end{proof}

\subsection{Correspondences between $\bV_h$ and $\bVmac_h$ and $Q_h$ and $\Qmac_h$}\label{sec:Correspond}
The following lemma shows that the finite element
pairs $\bV_h\times Q_h$ and $\bVmac_h\times \Qmac_h$
are $\calO(h)$ perturbations of each other. 
This result, along with the inf-sup stability 
estimate in Theorem \ref{thm:MacStable}
is the basis of the proof of Theorem \ref{thm:MainStab}.
Its proof is given in Appendix \ref{App:Pert}.
\begin{lemma}\label{lem:PITA}
Given $\bv^m\in \bVmac_h$, there exists $\bv\in \bV_h$ such that
\begin{equation}\label{eqn:vPertEst}
\|\bv\circ \bPsict - \bv^{\rm m}\circ \bPsimac\|_{H^1_h(\bar \Gamma_h)}\lesssim  h \|\bv^{\rm m}\|_{H^1_h(\Gammamac_h)}.
\end{equation}
Likewise, given $q\in Q_h$, there exists $q^{\rm m}\in \Qmac_h$ such that
\begin{equation}\label{eqn:qPertEst}
\|q\circ \bPsict - q^{\rm m}\circ \bPsimac\|_{L^2(\bar \Gamma_h)}\lesssim  h \|q\|_{L^2(\Gamma_h)}.
\end{equation}
\end{lemma}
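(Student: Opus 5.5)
The plan is to build both correspondences element-by-element through the reference configuration, and then to correct the velocity so that it satisfies the gluing conditions of $\bV_h$.

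\emph{Pressure.} Given $q\in Q_h$, write $q|_K=\hat q_K\circ F_K^{-1}$ with $\hat q_K\in\bbP_{r-1}(\hat T)$ for each $K\in\calT^{\rm ct}_h$. Define $q^{\rm m}|_{\Kmac}=\hat q_K\circ F_{\Kmac}^{-1}$. This function lies in $\Qmac(\Tmac)$ on each macro element, because the reference pulled back through $F_{\Tmac}$ is exactly a piecewise $\bbP_{r-1}$ function on $\hat T^{\rm ct}$. On $\bar K$ we have $q\circ\bPsict=\hat q_K\circ F_{\bar K}^{-1}=q^{\rm m}\circ\bPsimac$. The difference therefore vanishes before mean correction, and we take $q^{\rm m}-c$ with $c=|\Gammamac_h|^{-1}\int_{\Gammamac_h}q^{\rm m}$.

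To bound $c$, compare $\int_{\Gammamac_h}q^{\rm m}$ with $\int_{\Gamma_h}q=0$. Both equal $\int_{\bar\Gamma_h}(q\circ\bPsict)\,\cdot$ (area Jacobian). By Lemma~\ref{lem:FDiff}, \eqref{eqn:PsiMacInterpProp} and \eqref{eqn:FKBounds}, the two Jacobians differ by $\calO(h^k)$ relative to their size. Hence $|c|\lesssim h^k\|q\|_{L^2(\Gamma_h)}$, which gives \eqref{eqn:qPertEst} since $k\ge1$.

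\emph{Velocity, naive candidate.} Given $\bv^{\rm m}\in\bVmac_h$, for each $\bar K\subset\bar T$ let $\hat\bv_K\in\bbP_r(\hat T)^2$ be such that $\bv^{\rm m}|_{\Kmac}=\calP_{F_{\Kmac}}\hat\bv_K$. This is possible because $\bv^{\rm m}|_{\Tmac}$ is a piecewise polynomial on the reference split, and composing with the affine map $\hat T\to\hat K_i$ gives a polynomial. Define the broken function $\tilde\bv|_K=\calP_{F_K}\hat\bv_K$. Then on $\bar K$,
\[
\tilde\bv\circ\bPsict-\bv^{\rm m}\circ\bPsimac=\bigl((A_K-A_{\Kmac})\hat\bv_K\bigr)\circ F_{\bar K}^{-1},
\]
with $A_K,A_{\Kmac}$ as in Lemma~\ref{lem:ATDiff}. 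That lemma gives $\|A_K-A_{\Kmac}\|_{W^{1,\infty}(\hat T)}\lesssim h^{k-1}$, so scaling yields
\[
\|\tilde\bv\circ\bPsict-\bv^{\rm m}\circ\bPsimac\|_{H^1(\bar K)}\lesssim h^{k-1}\,\|\bv^{\rm m}\|_{H^1(\Kmac)}.
\]
(Lower-order $L^2$ contributions carry an extra factor $h$.)

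\emph{Main obstacle.} The rate $h^{k-1}$ is only $\calO(1)$ for $k=1$. A sharper look is needed. Since $F_{\bar K}$ is affine and $\bPsict$, $\bPsimac$ coincide at the Lagrange nodes of $\bar T$, both are $\calO(h^2)$-accurate maps. The relevant quantity is really $\nabla(\bPsict-\bPsimac)=\calO(h^k)$ measured relative to $\nabla F_{\bar K}\sim h$, together with its derivative. I would redo the estimate of $A_K-A_{\Kmac}$ on the physical scale: $A_K-A_{\Kmac}=\calO(h^k)$ and $\nabla(A_K-A_{\Kmac})=\calO(h^{k})$ after scaling. This should give a factor $h$ for all $k\ge1$. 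Failing this, I would treat $k=1$ separately: there both maps are affine on each $\bar K$, so $\bPsimac|_{\bar T}$ affine and $\bPsict$ piecewise affine differ by $\calO(h^2)$ at the barycenter only, which again gives relative $\calO(h)$.

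\emph{Velocity, correction.} The candidate $\tilde\bv$ does not satisfy the gluing conditions of $\bV_h$: at a node $a$ the values $\tilde\bv|_K(a)$ are related by $\calM^{\Kmac}_{a^{\rm m}}$ rather than $\calM^K_a$. Define $\bv\in\bV_h$ by prescribing, at each $a\in\calN^{\rm ct}_h$, the selected value $\bv|_{K_a}(a)=\tilde\bv|_{K_a}(a)$; the values on the other $K\in\calT^{\rm ct}_a$ follow from the $\calM^K_a$ rule.

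On each $K$, the difference $\bv-\tilde\bv$ is a mapped polynomial whose nodal values are $(\calM_a^K-\calP\calM^{\Kmac}\calP^{-1})\tilde\bv|_{K_a}(a)$, with $\calP$ the local Piola factors. The operator difference is $\calO(|\bnu_K(a)-\bnu_{\Kmac}(a)|)=\calO(h^k)$ by \eqref{eqn:NormalGood} and \eqref{eqn:macNuGood}. Nodal scaling and inverse estimates then give
\[
\|\bv-\tilde\bv\|_{H^1(K)}\lesssim h^{k}\,h^{-1}\cdot h\,\|\bv^{\rm m}\|_{H^1(\omega_K)}\lesssim h^k\,\|\bv^{\rm m}\|_{H^1(\omega_K)}.
\]
This estimate has to be done carefully: the nodal values scale like $h^{-1}\|\cdot\|_{L^2}$, which costs $h^{-1}$. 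So one must use the fact that the jumps of $\bv^{\rm m}$ themselves are controlled, or bound $|\tilde\bv(a)|$ by $h^{-1}\|\bv^{\rm m}\|_{L^2}$ and accept $h^{k-1}$. For $k=1$ this again requires the finer observation that at nodes interior to $\bar T$ both normals are continuous, so only the nodes on $\partial\bar T$ matter. Summing over $K$ and combining with the candidate estimate yields \eqref{eqn:vPertEst}.
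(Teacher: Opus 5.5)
Your proposal is correct for $k\ge 2$, which is exactly the range in which the paper's proof works: it invokes $k\ge 2$ in Lemma~\ref{lem:AB_bound} and in the last line of the pressure estimate. Your velocity argument, however, takes a genuinely different route.

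\textbf{Pressure.} The construction coincides with the paper's. Your bound on the mean uses the relative $\calO(h^k)$ discrepancy of the area elements of $\bPsict$ and $\bPsimac$ together with an $L^1$ estimate, and it gives $h^k\|q\|_{L^2(\Gamma_h)}$. The paper's chain of inequalities loses $h^{1/2}$ and ends with $h^{k-1/2}$.

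\textbf{Velocity, paper's route.} The paper copies the selected nodal values of $\bv^{\rm m}$ and obtains $\hat\bv(\hat a)=A_K^\dagger(\hat a)({\bf I}_3+B_a)A_{\Kmac}(\hat a)\bvhatmac(\hat a)$. It then compares $\hat\bv$ with the non-polynomial field $A_K^\dagger({\bf I}_3+\hat B_K)A_{\Kmac}\bvhatmac$ by Bramble--Hilbert. Since $B_a\approx-\bnu_K\otimes\bnu_K$ is $\calO(1)$, this forces Lemmas~\ref{lem:B-Bound} and \ref{lem:AB_bound}, i.e.\ smallness of the interpolated matrix field $A_K^\dagger\hat B_K$ in $W^{1,\infty}(\hat T)$.

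\textbf{Velocity, your route.} You pass through the broken field $\tilde\bv|_K=\calP_{F_K}\hat\bv_K$, which Lemma~\ref{lem:ATDiff} alone controls. You then correct it inside $\bV_h$. On each $K$ the correction is a mapped polynomial whose nodal values are the discrepancy between the two gluing rules, including Piola factors, applied to tangent vectors. That discrepancy is relatively $\calO(h^k)$, and one inverse estimate gives $|\bv-\tilde\bv|_{H^1(K)}\lesssim h^{k-1}\|\bv^{\rm m}\|_{L^2(\omega_K)}$. Because these operators are only ever applied to tangent vectors at nodes, the $\calO(1)$ normal part of $B_a$ never enters and the $\hat B_K$ machinery is avoided. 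Both routes deliver $\calO(h)$ precisely when $k\ge 2$, so yours is shorter at no cost for this lemma.

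A few points in your write-up need fixing:
\begin{itemize}
\item The nodal discrepancy is not only $\calO(|\bnu_K(a)-\bnu_{\Kmac}(a^{\rm m})|)$. It also contains terms like $(A_K-A_{\Kmac})A_{\Kmac}^\dagger$, which are $\calO(h^k)$ in relative size by Lemma~\ref{lem:ATDiff}, not by the normal estimates.
\item Your first displayed correction bound, with the factor $h^{-1}\cdot h$, has no source for the final $h$: the mapped nodal basis functions have $H^1$-seminorm of order one. State instead the $h^{k-1}$ bound you fall back on; it suffices for $k\ge 2$.
\item Drop the $k=1$ discussion. The paper does not prove that case either, and your proposed remedy is wrong. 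The normal of $\Gamma_h$ is not continuous at nodes interior to $\bar T$, because $\bPsict$ is only piecewise smooth on the Clough--Tocher split. For $k=1$ the three flat pieces of $\bPsict(\bar T)$ generally meet at $\calO(h)$ kinks, so your correction estimate yields only $\calO(1)$ there. The naive candidate, by contrast, is harmless at $k=1$, since $A_K$ and $A_{\Kmac}$ are then constant.
\end{itemize}
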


\subsection{Proof of Theorem \ref{thm:MainStab}}

Let $q\in Q_h$ and let $q^{\rm m}\in \Qmac_h$ 
satisfy \eqref{eqn:qPertEst}. We then find
\begin{align*}
\|q\|_{L^2(\Gamma_h)}
&\lesssim \|q \circ \bPsict\|_{L^2(\bar \Gamma_h)}\\
&\le \|q\circ \bPsict - q^{\rm m}\circ \bPsimac\|_{L^2(\bar \Gamma_h)}+\|q^{\rm m}\circ \bPsimac\|_{L^2(\bar \Gamma_h)}\\
&\lesssim h \|q\|_{L^2(\Gamma_h)}+ \|q^{\rm m}\|_{L^2(\Gammamac_h)},
\end{align*}
and so $\|q\|_{L^2(\Gamma_h)}
\lesssim \|q^{\rm m}\|_{L^2(\Gammamac_h)}$ for $h$ sufficiently small.

By the inf-sup stability result of the auxiliary pair
\eqref{eqn:Macroinfsup}, there exists $\bv^{\rm m}\in \bV_h^{\rm m}$
such that 
\[
\beta_m\|q^{\rm m}\|_{L^2(\Gammamac_h)} \le \int_{\Gammamac_h} ({\rm div}_{\Gammamac_h} \bv^{\rm m}) q^{\rm m},\quad\text{and}\quad \|\bv^{\rm m}\|_{H^1_h(\Gammamac_h)} = 1.
\]
We then let $\bv\in \bV_h$ satisfy \eqref{eqn:vPertEst}. 
Therefore for $h$ sufficiently small,
\begin{align}\label{eqn:MainStep1}
\|q\|_{L^2(\Gamma_h)}
&\lesssim \|q^{\rm m}\|_{L^2(\Gammamac_h)}\le \beta_{\rm m}^{-1}\int_{\Gammamac_h} ({\rm div}_{\Gammamac_h} \bv^{\rm m}) q^{\rm m}.
\end{align}
Set $\bar \bv^m = \bv^m\circ \bPsimac:\bar \Gamma_h\to \bbR^3$
and $\bar \bv = \bv\circ \bPsict:\Gamma_h\to \bbR^3$, so that
by \eqref{eqn:vPertEst}, $\|\bar \bv - \bar \bv^{\rm m}\|_{H^1_h(\bar \Gamma_h)}\lesssim h \|\bvmac\|_{H^1_h(\Gammamac_h)}$.
We then compute via a change of variables and the chain rule,
\begin{align*}
&\int_{\Gammamac_h} ({\rm div}_{\Gammamac_h} \bv^{\rm m}) q^{\rm m}
- \int_{\Gamma_h} ({\rm div}_{\Gamma_h} \bv) q\\
&\qquad = \int_{\bar \Gamma_h} {\rm tr}(\bPi_h^{\rm m} \bar \nab \bar \bv^{\rm m}\nab\bPsimac^{-1}\bPi^{\rm m}_h) q^{\rm m}\circ \bPsimac J_{\bPsimac}
- \int_{\bar \Gamma_h} {\rm tr}(\bPi_h \bar \nab \bar \bv \nab\bPsict^{-1}\bPi_h) q \circ \bPsict J_{\bPsict}.
\end{align*}
Here $\bar \bv^{\rm m}$ and $\bar \bv$ are extensions of $\bv^{\rm m}$ and $\bv$, respectively,
and $J_{\bPsict}$ is the product of non-zero singular values of $\nab\bPsict$.
We then add and subtract terms, use \eqref{eqn:bPsictBounds}, \eqref{eqn:PsiMacInterpProp}, 
and \eqref{eqn:vPertEst}--\eqref{eqn:qPertEst} to obtain
\begin{align*}
&\left|\int_{\Gammamac_h} ({\rm div}_{\Gammamac_h} \bv^{\rm m}) q^{\rm m}
- \int_{\Gamma_h} ({\rm div}_{\Gamma_h} \bv) q\right|\\
&\qquad \lesssim 
 \int_{\bar \Gamma_h} {\rm tr}(\bPi_h^{\rm m} \bar \nab \bar \bv^{\rm m}\nab\bPsimac^{-1}\bPi^{\rm m}_h) (q^{\rm m}\circ \bPsimac J_{\bPsimac}- q\circ \bPsict J_{\bPsict})\\
 &\qquad\qquad 
+ \int_{\bar \Gamma_h} {\rm tr}(\bPi_h^{\rm m} \bar \nab \bar \bv^{\rm m}\nab\bPsimac^{-1}\bPi^{\rm m}_h-\bPi_h \bar \nab \bar \bv \nab\bPsict^{-1}\bPi_h)q\circ \bPsict J_{\bPsict}\\
&\lesssim h \|\bvmac\|_{H^1_h(\Gammamac_h)} \|q\|_{L^2(\Gamma_h)} = h\|q\|_{L^2(\Gamma_h)}.
\end{align*}

Applying this estimate in \eqref{eqn:MainStep1} we conclude that, for $h$ sufficiently small,
\begin{align}\label{eqn:MainStep2}
\|q\|_{L^2(\Gamma_h)}
&\lesssim \int_{\Gamma_h} ({\rm div}_{\Gamma_h} \bv) q.
\end{align}
We also have by \eqref{eqn:vPertEst},
\begin{align*}
    \|\bv\|_{H^1_h(\Gamma_h)}\lesssim \|\bv\circ \bPsict\|_{H^1_h(\bar \Gamma_h)}\le  \|\bv^{\rm m}\circ \bPsimac\|_{H^1_h(\bar \Gamma_h)} + h \|\bv^{\rm m}\|_{H^1_h(\Gammamac_h)}\lesssim (1+h)\|\bv^{\rm m}\|_{H^1_h(\Gammamac_h)} = (1+h).
\end{align*}
We combine this estimate with \eqref{eqn:MainStep2} to conclude the proof.

\section{Finite Element Method and Convergence Analysis}\label{sec:Converge}
The finite element method seeks $(\bu_h,p_h)\in \bV_h\times Q_h$
such that
\begin{equation}
\label{eqn:FEM}
\begin{aligned}
     a_h(\bu_h,\bv)
    -\int_{\Gamma_h} ({\rm div}_{\Gamma_h} \bv)p_h & = \int_{\Gamma_h} {\bm f}_h\cdot \bv\qquad &&\forall \bv\in \bV_h,\\
    \int_{\Gamma_h} ({\rm div}_{\Gamma_h} \bu_h)q & =0\qquad &&\forall q\in Q_h,
    \end{aligned}
    \end{equation}
where 
\[
a_h(\bw,\bv) =  \int_{\Gamma_h}\left( {\rm Def}_{\Gamma_h} \bw:{\rm Def}_{\Gamma_h} \bv+ \bw\cdot \bv\right),
\]
and ${\bm f}_h:\Gamma_h\to \bbR^3$ is an approximation of ${\bm f}$.
It follows from Theorem \ref{thm:MainStab} and a discrete Korn-like inequality
(cf.~\cite[Lemma 4.6]{DemlowNeilan25}) that there exists a unique
solution to \eqref{eqn:FEM}. Further note that the approximated
velocity is exactly divergence free, i.e., ${\rm div}_{\Gamma_h} \bu_h =0$; 
see Lemma \ref{lem:DivFreeProp}.

\subsection{Consistency Estimates}\label{sec:Consistency}

Because the velocity space of the Scott-Vogelius
and Taylor-Hood pairs coincide, we
have the following result in \cite[Lemma 4.5]{DemlowNeilan25},
which provides estimates quantifying 
the weak continuity properties of $\bV_h$.
\begin{lemma}
Suppose that Assumption \ref{assume:GL} is satisfied.
For $\bv\in \bV_h$,
let $\pt \bv = \calP_{\bp} \bv$ denote its Piola transform
with respect to $\bp$. Then for all $\bw\in \bH^{s}(\gamma)$
with $2\le s\le r$, there holds for $m=0,1,\ldots,r$,
\begin{align}\label{eqn:KrisKross}
\sum_{e\in \calE_h^{\rm ct}} \int_{\bp(e)} {\rm Def}_\gamma \bw: \jump{\pt \bv}\lesssim h^{s+m-1} \|\bw\|_{H^{s}(\gamma)} \|\pt \bv\|_{H^m_h(\gamma)} + h^{2k-1} \|\bw\|_{H^2(\gamma)} \|\pt \bv\|_{L^2(\gamma)}.
\end{align}
Here, the jump of $\pt \bv$ is given by
\[
\jump{\pt \bv}|_{\bp(e)} = \pt \bv_+\otimes \bn_+^\gamma+ \pt\bv_-\otimes \bn_-^\gamma,
\]
where $e = \p K_+\cap \p K_-\ (K_{\pm}\in \calT_h^{\rm ct})$, $\pt \bv|_{\pm} = \pt \bv|_{\bp(K_{\pm})}$, and $\bn_{\pm}^\gamma$ is the outward unit co-normal
with respect to $\bp (\p K_{\pm})$.
\end{lemma}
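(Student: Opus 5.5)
The statement is quoted from \cite[Lemma 4.5]{DemlowNeilan25}. The only property used there is the structure of the velocity space together with Assumption~\ref{assume:GL}, so the plan is to re-run that argument on $\calT_h^{\rm ct}$ and check that nothing depends on the triangulation being unrefined.

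\textbf{Step 1: edge-by-edge decomposition.} Fix an edge $e=\p K_+\cap\p K_-\in\calE_h^{\rm ct}$ and split ${\rm Def}_\gamma\bw:\jump{\pt\bv}$. The co-normal continuity $\bV_h\subset \bH({\rm div}_{\Gamma_h};\Gamma_h)$ (Remark~\ref{rem:HdivRemark}) is preserved by the Piola transform $\calP_{\bp}$. Hence the co-normal-normal part of the jump vanishes on $\bp(e)$, and only the tangential components of $\pt\bv_\pm$ along $\bp(e)$ contribute. There is also a geometric part: the co-normals $\bn^\gamma_\pm$ are not exactly opposite to those induced by $K_\pm$. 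This discrepancy is $\calO(h^{k})$ by \eqref{eqn:NormalGood} and \eqref{eqn:bPsiBounds}.

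\textbf{Step 2: the tangential jump is nearly orthogonal to polynomials.} By the definition of $\calM^K_a$, at every Lagrange node $a$ on $e$ the values $\bv|_{K_\pm}(a)$ are Piola-related through the tangent plane of $K_a$. Hence, after transforming by $\bp$, the tangential jump of $\pt\bv$ vanishes at the $r+1$ Gauss--Lobatto nodes of $\bp(e)$, up to an $\calO(h^{k})$ geometric perturbation of the nodal mapping.
\begin{itemize}
\item Pull the jump back to $\bar e$. It is then a degree-$r$ polynomial (times smooth geometric factors) vanishing at the Gauss--Lobatto points, plus a perturbation of size $h^{k}$ relative to $|\pt\bv|$.
\item Gauss--Lobatto exactness for degree $2r-1$ shows that this jump is $L^2(\bar e)$-orthogonal to $\bbP_{r-2}(\bar e)$, modulo the geometric errors.
\end{itemize}

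\textbf{Step 3: the estimate.} On each edge, subtract from ${\rm Def}_\gamma\bw$ its $L^2$ projection onto the mapped $\bbP_{r-2}$ and bound the principal term with three ingredients:
\begin{itemize}
\item the trace inequality together with the approximation property of this projection, giving $h^{s-3/2}\|\bw\|_{H^s}$ on the patch;
\item the estimate of $\jump{\pt\bv}$ on $e$ in terms of its jump from the interpolant, using that the jump of a continuous piecewise $H^m$ approximation is controlled by $h^{m-1/2}\|\pt\bv\|_{H^m_h}$ via Bramble--Hilbert on the two-element patch;
\item summation over edges.
\end{itemize}
Together these yield $h^{s+m-1}\|\bw\|_{H^s}\|\pt\bv\|_{H^m_h}$.

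The geometric remainders from Steps 1 and 2 are handled separately. Each is an $\calO(h^{k})$ quantity integrated against ${\rm Def}_\gamma\bw$ and $\pt\bv$ over edges. Squeezing out an additional $h^{k-1}$ requires a more careful quadrature argument: the perturbation enters twice, once through the normals and once through the nodal mapping. Combined with trace inequalities, this gives $h^{2k-1}\|\bw\|_{H^2}\|\pt\bv\|_{L^2}$.

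\textbf{Expected difficulty.} The crux, as in \cite{DemlowNeilan25}, is obtaining the sharp $h^{2k-1}$ rather than $h^{k-1}$. I would first try Taylor-expanding the normals $\bnu_{K_\pm}$ about the exact $\bnu$. The leading antisymmetric terms should cancel between the two sides of $e$, leaving a product of two $\calO(h^{k})$ errors.

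Since the Clough--Tocher refinement only adds interior edges, where $\bPsict$ is still piecewise degree-$k$ and satisfies \eqref{eqn:bPsiBounds} and \eqref{eqn:FKBounds}, every local estimate transfers verbatim.
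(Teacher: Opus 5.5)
Your reduction matches the paper's. The paper gives no argument beyond citing \cite[Lemma 4.5]{DemlowNeilan25}, on the grounds that $\bV_h$ is the Taylor--Hood velocity space built on $\calT_h^{\rm ct}$, which satisfies the same hypotheses, so as a citation your proposal is fine. The re-derivation you sketch, however, does not give the stated bound.

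First, the exponents in Step 3 do not add up: $h^{s-3/2}\cdot h^{m-1/2}=h^{s+m-2}$, one power short of $h^{s+m-1}$. For $m=0$ your jump bound is just the discrete trace inequality. The bound $h^{m-1/2}\|\pt\bv\|_{H^m_h}$ is the generic one for a nonconforming function; the missing power comes from a property of $\bV_h$ you do not exploit. On each edge the jump is $\phi\,\bm{t}^\gamma\otimes\bn^\gamma_+$ with $\phi=(\pt\bv_+-\pt\bv_-)\cdot\bm{t}^\gamma$. The function $\phi$ vanishes, up to $\calO(h^{2k})$, at all $r+1$ Gauss--Lobatto nodes. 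So it equals its own degree-$r$ interpolation error in the Gauss--Lobatto edge parameter, plus an $\calO(h^{2k})$ interpolant. In that parameter $\pt\bv_\pm$ are degree-$r$ polynomials times smooth geometric factors whose $j$th derivatives carry an extra $h^j$. Hence every term of $\phi^{(r+1)}$ contains a derivative of a geometric factor, and locally $\|\phi\|_{L^2(\bp(e))}\lesssim h^{m+1/2}\|\pt\bv\|_{H^m_h}+h^{2k-1/2}\|\pt\bv\|_{L^2}$ for $0\le m\le r$. The same derivative counting, not an $\calO(h^k)$ geometric error, controls the Gauss--Lobatto quadrature defect in your near-orthogonality to $\bbP_{r-2}$. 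Note that a genuine degree-$r$ polynomial vanishing at $r+1$ nodes would simply be zero.

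Second, the $h^{2k-1}$ term needs no delicate cancellation, whereas the $\calO(h^k)$ remainders you carry would only give $h^{k-1}$, which is suboptimal for $k=r$. The Step 1 remainder does not exist. The $\bn^\gamma_\pm$ are co-normals of $\bp(\p K_\pm)$ on the smooth surface $\gamma$, so $\bn^\gamma_-=-\bn^\gamma_+$ exactly, and $\pt\bv\in\bH({\rm div}_\gamma;\gamma)$ kills the co-normal part exactly. The nodal defect in Step 2 is $\calO(h^{2k})$, not $\calO(h^k)$, and this follows directly from \eqref{eqn:calMaK}:
At an edge-interior node with $K_a=K_+$, the relation $\bm{t}_e\perp\bnu_{K_+}$ gives $\bm{t}_e\cdot\calM_a^{K_-}\bx=(\bnu_{K_+}\cdot\bnu_{K_-})\,\bm{t}_e\cdot\bx$. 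By \eqref{eqn:NormalGood}, $1-\bnu_{K_+}\cdot\bnu_{K_-}=\tfrac12|\bnu_{K_+}-\bnu_{K_-}|^2=\calO(h^{2k})$. At vertices the additional term $(\bnu_{K_a}\cdot\bm{t}_e)(\bnu_{K_\pm}\cdot\bx)$ is a product of two $\calO(h^k)$ factors. Mapping to $\gamma$ through $\nab\bp$ and $\mu_h$ adds only $\calO(h^{2k})$, once you use flux continuity $\bv_+\cdot\bn_+=-\bv_-\cdot\bn_-$ with respect to the discrete co-normals on $\Gamma_h$. Discrete trace inequalities alone then give $h^{2k}\sum_e\|{\rm Def}_\gamma\bw\|_{L^2(\bp(e))}\|\pt\bv\|_{L^2(\bp(e))}\lesssim h^{2k-1}\|\bw\|_{H^2(\gamma)}\|\pt\bv\|_{L^2(\gamma)}$. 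The Taylor-expansion argument you propose is therefore unnecessary.
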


Next, we state the geometric inconsistency estimates given in
\cite[Lemmas 5.1--5.2]{DemlowNeilan25}.
\begin{lemma}
Define 
\[
G_h(\bv,\bw) = a(\pt \bv,\pt \bw) - a_h(\bv,\bw),
\]
where $a(\pt \bv,\pt \bw) = \int_\gamma \big({\rm Def}_\gamma \pt \bv:{\rm Def}_\gamma \pt \bw+\pt\bv\cdot \pt \bw\big)$.
There holds
\begin{alignat}{2}\label{eqn:GeomEst1}
|G_h(\bv,\bw)|&\lesssim h^k \|\pt \bv\|_{H^1_h(\gamma)} \|\pt \bw\|_{H^1_h(\gamma)} \qquad &&\forall \bv,\bw\in \bH^1(\calT^{\rm ct}_h),\\
\label{eqn:GeomEst2}
|G_h(\ipt \bv,\ipt \bw)|&\lesssim h^{k+1} \| \bv\|_{H^2(\gamma)}  \|\bw\|_{H^2(\gamma)} \qquad &&\forall \bv,\bw\in \bH^2(\gamma)\cap \bH^1_T(\gamma).
\end{alignat}
\end{lemma}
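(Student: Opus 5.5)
The plan is to pull both bilinear forms back to a common surface and compare their integrands pointwise. Fix $K\in\calT_h^{\rm ct}$. Using $\int_\gamma w=\int_{\Gamma_h}\mu_h w^e$, I write
\[
a(\pt\bv,\pt\bw)=\sum_K\int_K \mu_h\big[({\rm Def}_\gamma\pt\bv)^e:({\rm Def}_\gamma\pt\bw)^e+(\pt\bv)^e\cdot(\pt\bw)^e\big].
\]
Next I apply the chain rule to $(\pt\bv)^e=\mu_h^{-1}[\bPi-d{\bf H}]\bv$ on $K$. This gives
\[
(\nab_\gamma\pt\bv)^e=\bPi\big(\mu_h^{-1}[\bPi-d{\bf H}]\nab_{\Gamma_h}\bv+\nab_{\Gamma_h}(\mu_h^{-1}[\bPi-d{\bf H}])\,\bv\big)[\bI_3-d{\bf H}]^{-1}\bPi_\ast ,
\]
where $\bPi_\ast$ is the standard factor mapping $\gamma$-tangent vectors to $\Gamma_h$-tangent ones, $\bPi_h-\frac{\bnu_h\otimes\bnu}{\bnu\cdot\bnu_h}$ up to notation. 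I then subtract the corresponding expression in $a_h$ and track each geometric factor using the following bounds:
\begin{itemize}
\item $d=\calO(h^{k+1})$;
\item $|\bnu-\bnu_h|=\calO(h^k)$ by \eqref{eqn:NormalGood};
\item $1-\bnu\cdot\bnu_h=\tfrac12|\bnu-\bnu_h|^2=\calO(h^{2k})$, hence $\mu_h-1=\calO(h^{k+1})$;
\item piecewise derivatives of $\bnu_h$ are bounded, so $\nab_{\Gamma_h}\mu_h=\calO(h^k)$.
\end{itemize}

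\emph{Step 1 (proof of \eqref{eqn:GeomEst1}).} After the expansion, every term of $G_h(\bv,\bw)$ contains at least one factor from the list $\bPi-\bPi_h$, $\bnu\cdot\bv=(\bnu-\bnu_h)\cdot\bv$ (since $\bv$ is tangent to $\Gamma_h$), $d$, $\mu_h-1$, or $\nab\mu_h$. Each of these factors is $\calO(h^k)$ pointwise. The remaining factors are bounded by $|\nab_{\Gamma_h}\bv|+|\bv|$, and likewise for $\bw$. Cauchy--Schwarz on each $K$, combined with the norm equivalence $\|\bv\|_{H^\ell(K)}\approx\|\pt\bv\|_{H^\ell(\bp(K))}$, then gives \eqref{eqn:GeomEst1}. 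This step is routine bookkeeping.

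\emph{Step 2 (proof of \eqref{eqn:GeomEst2}).} For $\ipt\bv,\ipt\bw$ with $\bv,\bw\in\bH^2(\gamma)\cap\bH^1_T(\gamma)$, I redo the expansion in the other direction, so that the integrands involve the smooth fields $\bv^e,\bw^e$. The terms containing $d$, $\mu_h-1$, or a quadratic expression in $\bnu-\bnu_h$ are already $\calO(h^{k+1})$, because $2k\ge k+1$. What remains are terms \emph{linear} in $\bnu-\bnu_h$, and more precisely linear in its tangential part $\bPi(\bnu_h-\bnu)$, since the normal part of $\bnu-\bnu_h$ is quadratic. These terms take the form
\[
\sum_K\int_K \bPi(\bnu_h-\bnu)\cdot\bm\Phi(\bv,\bw),
\]
where $\bm\Phi$ is bilinear in $(\bv^e,\nab\bv^e)$ and $(\bw^e,\nab\bw^e)$ and involves only smooth geometric factors. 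Bounding them naively only gives $h^k$.

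To gain the extra power I will use the identity that, on $\Gamma_h$, the tangential normal error is a gradient up to higher order:
\[
\bPi_h(\bnu-\bnu_h)\approx\nab_{\Gamma_h}d ,
\]
up to a term of size $\calO(h^{2k})+|d|$. This holds because $\nab d=\bnu$, so $\nab_{\Gamma_h}d=\bPi_h\bnu$. Substituting the gradient and integrating by parts on each $K$ gives
\[
\int_K\nab_{\Gamma_h}d\cdot\bm\Phi=-\int_K d\,{\rm div}_{\Gamma_h}\bm\Phi+\int_{\p K}d\,\bm\Phi\cdot\bn_{\p K}.
\]
The boundary terms cancel across interior edges up to the co-normal mismatch $|\bn_+ +\bn_-|=\calO(h^k)$, because $d$ is single-valued and $\bm\Phi$ is built from smooth extensions. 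The volume term is $\lesssim h^{k+1}\|\bv\|_{H^2}\|\bw\|_{H^2}$, since ${\rm div}\,\bm\Phi$ involves at most second derivatives of $\bv$ and $\bw$. I also need to handle the derivative falling on $\bnu_h$ and on the curvature of $K$; these contributions are bounded because piecewise derivatives of $\bnu_h$ are uniformly bounded.

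I expect Step 2 to be the main obstacle. Specifically, the difficulty is organizing the $\calO(h^k)$ terms so that only the combination $\bPi(\bnu_h-\bnu)$ appears, contracted against a field that is smooth across edges. I would verify this structure first by computing directly the symmetric part ${\rm Def}$ of the leading-order difference.
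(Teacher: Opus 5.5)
This lemma is not proved in the paper. It is imported verbatim from \cite[Lemmas 5.1--5.2]{DemlowNeilan25}, so there is no in-paper argument to compare against. Your outline is a sound reconstruction of the standard argument. Step 1 is routine bookkeeping. The key idea of Step 2 is the right mechanism: isolate the part of the error that is linear in the tangential normal error, then trade it for $d=\calO(h^{k+1})$ via $\bPi_h\bnu=\nab_{\Gamma_h}d$ and integration by parts. Note that this identity is exact, not approximate, since $\nab d=\bnu$ in the tubular neighborhood.

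When you write Step 2 out, two points in your classification of terms need fixing.

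\emph{Derivatives of small factors.} Terms in which a derivative falls on a small factor are not $\calO(h^{k+1})$. Both $\nab_{\Gamma_h}d$ and $\nab_{\Gamma_h}\mu_h=-({\rm tr}\,{\bf H})\nab_{\Gamma_h}d+\calO(h^{2k-1}+h^{k+1})$ are only $\calO(h^k)$, so they must be routed into the integration by parts. Derivatives of quadratic quantities such as $\bnu\cdot\bnu_h$ are only $\calO(h^{2k-1})$.

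\emph{The discrete Weingarten map.} Differentiating the $\bnu_h$ inside $\ipt\bv$ produces an $\calO(h^k)$ term of the form $\bPi\bnu_h\otimes{\bf H}_h\bv^e$, where ${\bf H}_h=\nab_{\Gamma_h}\bnu_h$ is the piecewise discrete Weingarten map. Its coefficient jumps across edges, so your claim that ${\bm\Phi}$ involves only smooth factors, and the edge cancellation built on it, fails for this term. The fix is to replace ${\bf H}_h$ by ${\bf H}$ at a cost of $\calO(h^{2k-1})$. This uses $|{\bf H}_h-{\bf H}|\lesssim h^{k-1}$, which follows from the decomposition $\bPsict=\bIct_h\bp+{\bm r}$ in the appendix.

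Both remainders are $\calO(h^{k+1})$ when $k\ge 2$, which is the regime of this paper. For $k=1$, use instead that ${\bf H}_h\equiv 0$ and that $\nab_{\Gamma_h}(\bnu\cdot\bnu_h)=\bPi_h{\bf H}\bnu_h$ is itself linear in the tangential normal error.

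Finally, you can avoid the co-normal mismatch bookkeeping entirely by integrating by parts on $\gamma$ rather than on $\Gamma_h$. The lift $d^\ell$ is globally Lipschitz on the closed surface $\gamma$. One has exactly $\bPi\bnu_h^\ell=-(\bnu\cdot\bnu_h^\ell)(\bI_3+d^\ell{\bf H})^{-1}\nab_\gamma d^\ell$, so $\bPi\bnu_h^\ell=-\nab_\gamma d^\ell$ up to a factor $\bI_3+\calO(h^{k+1})$. Then $\int_\gamma\nab_\gamma d^\ell\cdot{\bm\phi}=-\int_\gamma d^\ell\,{\rm div}_\gamma{\bm\phi}$ for smooth tangential ${\bm\phi}$, with no edge terms at all.
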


\subsection{Convergence analysis}

Let $\bX_h = \{\bv\in \bV_h:\ {\rm div}_{\Gamma_h} \bv\}\subset \bV_h$ be the divergence-free subspace of $\bV_h$, and set
\[
\|{\bm f}\|_{X_h^*} :=\sup_{\bv\in \bX_h} \frac{\int_\gamma {\bm f}\cdot \pt \bv}{\|\pt \bv\|_{H^1_h(\gamma)}}.
\]
Note that $\bu_h\in \bX_h$ and $\int_{\Gamma_h} {\bm f}_h\cdot \bv = \int_\gamma {\bm F}_h^\ell \cdot \pt \bv$ 
for all $\bv\in \bV_h$, where \cite[(5.7)]{DemlowNeilan25}
\begin{equation}\label{eqn:FhDeff}
{\bm F}_h = \big[{\bf I}- d {\bf H}]^{-1} \left[{\bf I}- \frac{\bnu_h\otimes \bnu}{\bnu_h\cdot \bnu}\right] {\bm f}_h.
\end{equation}
Thus, by setting $\bv = \bu_h$ in \eqref{eqn:FEM},
we conclude from a discrete Korn inequality
that $\|\pt \bu_h\|_{H^1_h(\gamma)}\lesssim \|{\bm F}_h^\ell\|_{X_h^*}$.

\begin{theorem}\label{thm:EnergyEst}
Let Assumption \ref{assume:GL} hold.
Suppose that $(\bu,p)\in \bH^s(\gamma)\times H^{s-1}(\gamma)$ for some $2\le s\le r+1$. Then there holds
\begin{align}\label{eqn:H1error}
\|\bu-\pt \bu_h\|_{H^1_h(\gamma)}&\lesssim  (h^{s-1}+h^{2k-1})\|\bu\|_{H^{s}(\gamma)}
+\|{\bm f}-{\bm F}_h^\ell\|_{X_h^*} + h^k \|{\bm F}_h^\ell\|_{X_h^*},\\
\label{eqn:PL2error}
\|p-p_h^\ell\|_{L^2(\gamma)} & \lesssim (h^{s-1}+h^{2k-1})(\|\bu\|_{H^s(\gamma)}+\|p\|_{H^{s-1}(\gamma)})  + \|{\bm f}-{\bm F}_h^\ell\|_{L^2(\gamma)}+h^k \|{\bm F}_h^\ell\|_{X_h^*}.
\end{align}
Consequently, if $\|{\bm f}-{\bm F}_h^\ell\|_{L^2(\gamma)}=\mathcal{O}(h^k)$, there holds
\[
\|\bu-\pt \bu_h\|_{H^1_h(\gamma)}+\|p-p_h\|_{L^2(\gamma)} = \mathcal{O}(h^{s-1}+h^k).
\]
\end{theorem}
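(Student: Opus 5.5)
We follow the standard Strang-type argument of \cite[Theorem 5.3]{DemlowNeilan25}, but exploit that $\bu_h\in \bX_h$ and that $\bX_h$ is exactly divergence-free. For the velocity estimate \eqref{eqn:H1error}, the plan is to compare $\bu_h$ with a divergence-free approximation of $\bu$.

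First, we construct $\bw_h\in \bX_h$ with
\[
\|\bu-\pt\bw_h\|_{H^1_h(\gamma)}\lesssim (h^{s-1}+h^{k})\|\bu\|_{H^s(\gamma)}.
\]
We start from the Lagrange-type interpolant $\bI_h\ipt\bu\in\bV_h$ of \cite{DemlowNeilan25}, which has optimal order $h^{s-1}$. We then correct it by $\bv_h\in\bV_h$, obtained from Theorem~\ref{thm:MainStab} via the standard Fortin argument, so that $\int_{\Gamma_h}{\rm div}_{\Gamma_h}(\bI_h\ipt\bu+\bv_h)q=0$ for all $q\in Q_h$. The correction satisfies
\[
\|\bv_h\|_{H^1_h}\lesssim \beta^{-1}\sup_{q\in Q_h}\frac{\int_{\Gamma_h}{\rm div}_{\Gamma_h}(\bI_h\ipt\bu-\ipt\bu)q}{\|q\|_{L^2(\Gamma_h)}}.
\]
Here we used $\int_{\Gamma_h}({\rm div}_{\Gamma_h}\ipt\bu)q=\int_\gamma({\rm div}_\gamma\bu)q^\ell=0$. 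By Lemma~\ref{lem:DivFreeProp}, $\bw_h:=\bI_h\ipt\bu+\bv_h\in\bX_h$.

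Next, we set $\be_h=\bu_h-\bw_h\in\bX_h$ and use the discrete Korn inequality:
\[
\|\pt\be_h\|_{H^1_h}^2\lesssim a_h(\be_h,\be_h)=a_h(\bu_h,\be_h)-a_h(\bw_h,\be_h).
\]
Since $\be_h$ is divergence-free, the pressure term drops and $a_h(\bu_h,\be_h)=\int_\gamma\bm F_h^\ell\cdot\pt\be_h$. On $\gamma$ we test the continuous equation with $\pt\be_h$ and integrate by parts elementwise. This gives
\[
a(\bu,\pt\be_h)-\sum_e\int_{\bp(e)}{\rm Def}_\gamma\bu:\jump{\pt\be_h}-\int_\gamma p\,{\rm div}_\gamma\pt\be_h=\int_\gamma\bm f\cdot\pt\be_h.
\]
The key point is that ${\rm div}_\gamma\pt\be_h=0$ exactly, because the Piola transform preserves vanishing divergence. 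Hence $p$ disappears from the velocity error, which is why the velocity bound involves no pressure norm.

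Subtracting the two identities leaves four groups of terms:
\begin{enumerate}
\item $\int_\gamma(\bm f-\bm F_h^\ell)\cdot\pt\be_h$, bounded by $\|\bm f-\bm F_h^\ell\|_{X_h^*}$;
\item the jump term, bounded via \eqref{eqn:KrisKross} with $m=1$ by $(h^{s-1}+h^{2k-1})\|\bu\|_{H^s}$;
\item $a(\bu-\pt\bw_h,\pt\be_h)$, bounded by the approximation estimate above;
\item the geometric error $G_h(\bw_h,\be_h)$, bounded by \eqref{eqn:GeomEst1} as $h^k\|\pt\bw_h\|\|\pt\be_h\|$.
\end{enumerate}
In the fourth group, $\|\pt\bw_h\|_{H^1_h}$ should be replaced by $\|\bF_h^\ell\|_{X_h^*}$. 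We would do this by instead writing the geometric error as $G_h(\bu_h,\be_h)$, which is bounded by $h^k\|\pt\bu_h\|\,\|\pt\be_h\|\lesssim h^k\|\bF_h^\ell\|_{X_h^*}\|\pt\be_h\|$. The remaining $a(\bu-\pt\bw_h,\cdot)$ terms are then handled as before. The triangle inequality finishes \eqref{eqn:H1error}.

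For the pressure estimate \eqref{eqn:PL2error}, we apply Theorem~\ref{thm:MainStab} to $p_h-\Pi_hp$, where $\Pi_h p\in Q_h$ is the $L^2$ projection of $p^e$, suitably corrected by a mean-value constant of order $h^{k+1}$. Using the first equation of \eqref{eqn:FEM} and the continuous equation tested with $\pt\bv$ for $\bv\in\bV_h$, we need to bound
\[
\int_{\Gamma_h}({\rm div}_{\Gamma_h}\bv)(p_h-\Pi_hp).
\]
This splits into the following pieces:
\begin{itemize}
\item the velocity-error term $a(\bu-\pt\bu_h,\pt\bv)$;
\item the geometric error $G_h$;
\item the jump term from \eqref{eqn:KrisKross};
\item $\|\bm f-\bm F_h^\ell\|_{L^2}$, which is now needed in $L^2$ because $\bv$ is no longer divergence-free;
\item $\|p-\Pi_hp\|$, of order $h^{s-1}$.
\end{itemize}

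The main obstacle is the first step: ensuring the divergence-free approximant $\bw_h$ attains the optimal order despite the $\calO(h^k)$ geometric mismatch. The term $\int({\rm div}_{\Gamma_h}(\bI_h\ipt\bu-\ipt\bu))q$ is fine because $\ipt\bu$ is exactly divergence-free on $\Gamma_h$ in the weak sense. However, one must check that $\ipt\bu$ has enough piecewise regularity on the curved Clough--Tocher mesh for the interpolant estimate. If that fails, we would instead construct $\bw_h$ on $\gamma$ directly through Lemma~\ref{lem:Interp2}-type arguments.
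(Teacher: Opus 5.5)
Your proposal is correct and is essentially the paper's proof. The paper likewise tests with $\bv\in\bX_h$, uses ${\rm div}_\gamma\pt\bv=0$ to eliminate $p$, and writes $a(\bu-\pt\bu_h,\pt\bv)$ as the jump term plus $\int_\gamma({\bm f}-{\bm F}_h^\ell)\cdot\pt\bv$ plus $G_h(\bu_h,\bv)$; this is your corrected bookkeeping, i.e.\ Korn applied to $a(\cdot,\cdot)$ on $\gamma$ rather than to $a_h$. It bounds these terms by \eqref{eqn:KrisKross} with $m=1$ and \eqref{eqn:GeomEst1}, closes with Strang's lemma, and gets the pressure bound from \eqref{eqn:PressureStart} and Theorem~\ref{thm:MainStab} using exactly your five pieces.

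The only thing you make explicit that the paper calls ``standard arguments'' is the Fortin-type correction producing a divergence-free approximant. Since ${\rm div}_{\Gamma_h}\ipt\bu=0$, this correction costs only a factor $(1+\beta^{-1})$ over the interpolation error of $\bV_h$. So the extra $h^k$ you hedge with in your first display does not arise, and it must not, because $h^k\|\bu\|_{H^s(\gamma)}$ is not among the terms of \eqref{eqn:H1error}.
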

\begin{proof}
Given $\bv\in \bX_h$, we compute
\begin{align*}
a(\bu-\pt \bu_h,\pt \bv)
&= a(\bu,\pt \bv) - \int_\gamma {\bm f}\cdot \pt \bv + \int_\gamma ({\bm f} - {\bm F}_h^\ell)\cdot \pt \bv + G_h(\bu_h,\bv)\\
& = \sum_{e\in \calE_h^{\rm ct}} \int_{\bp(e) } ({\rm Def}_\gamma \bu):\jump{\pt \bv}+ \int_\gamma ({\bm f} - {\bm F}_h^\ell)\cdot \pt \bv + G_h(\bu_h,\bv),
\end{align*}
where we applied integration-by-parts and used the pointwise divergence-free property of $\pt \bv$ in the last equality. We 
then conclude from \eqref{eqn:GeomEst1} (with $m=1$) and \eqref{eqn:KrisKross} that
\begin{align*}
a(\bu-\pt \bu_h,\pt \bv) &\lesssim \left(h^{s-1}\|\bu\|_{H^{s}(\gamma)} + h^{2k-1} \|\bu\|_{H^2(\gamma)}
+\|{\bm f}-{\bm F}_h^\ell\|_{X_h^*} + h^k \|{\bm F}_h^\ell\|_{X_h^*}\right)\|\pt \bv\|_{H^1_h(\gamma)}.
\end{align*}
We can then apply 
Strang's Second lemma, approximation properties
of $\bV_h$, and standard arguments to conclude
\eqref{eqn:H1error}.

Next, for arbitrary $q\in Q_h,$ we have for all $\bv\in\bV_h$,
\begin{align}\label{eqn:PressureStart}
\int_{\Gamma_h} ({\rm div}_{\Gamma_h} \bv)(q-p_h) = \int_\gamma {\bm F}_h^\ell\cdot \pt \bv - a(\pt \bu_h,\pt \bv) + \int_\gamma ({\rm div}_\gamma \pt \bv) q^\ell+G_h(\bu_h,\bv).
\end{align}
We then write, using \eqref{eqn:KrisKross} and \eqref{eqn:H1error},
\begin{align*}
&\int_\gamma {\bm F}_h^\ell\cdot \pt\bv - a(\pt \bu_h,\pt \bv) + \int_\gamma ({\rm div}_\gamma \pt \bv) q^\ell\\
&\qquad
=\int_\gamma {\bm f}\cdot \pt \bv - a(\bu,\pt \bv)+\int_{\Gamma_h} ({\rm div}_\gamma \pt \bv)p+\int_\gamma ({\bm F}_h^\ell - {\bm f})\cdot \pt \bv + a(\bu-\pt \bu_h,\pt \bv)+\int_{\gamma} ({\rm div}_\gamma \pt \bv)(q^\ell-p)\\
&\qquad
=-\sum_{e\in \calE^{\rm ct}_h} \int_{\bp(e)} ({\rm Def}_\gamma \bu):\jump{\pt \bv}
+\int_\gamma ({\bm F}_h^\ell - {\bm f})\cdot \pt \bv + a(\bu-\pt \bu_h,\pt \bv)+\int_{\gamma} ({\rm div}_\gamma \pt \bv)(q^\ell-p)\\
&\qquad\lesssim ((h^{s-1}+h^{2k-1}) \|\bu\|_{H^s(\gamma)} + \|{\bm f}-{\bm F}_h^\ell\|_{L^2(\gamma)}+\|\bu-\pt\bu_h\|_{H^1_h(\gamma)}+\|p-q^\ell\|_{L^2(\gamma)})\|\pt\bv\|_{H^1_h(\gamma)}\\
&\qquad \lesssim ((h^{s-1}+h^{2k-1}) \|\bu\|_{H^s(\gamma)} + \|{\bm f}-{\bm F}_h^\ell\|_{L^2(\gamma)}+h^k \|{\bm F}_h^\ell\|_{X_h^*}+\|p-q^\ell\|_{L^2(\gamma)})\|\pt\bv\|_{H^1_h(\gamma)}.
\end{align*}
Applying this estimate and the geometric estimate \eqref{eqn:GeomEst1} to \eqref{eqn:PressureStart}
yields
\begin{align*}
\int_{\Gamma_h} ({\rm div}_{\Gamma_h} \bv)(q-p_h) 
\lesssim ((h^{s-1}+h^{2k-1}) \|\bu\|_{H^s(\gamma)} + \|{\bm f}-{\bm F}_h^\ell\|_{L^2(\gamma)}+h^k \|{\bm F}_h^\ell\|_{X_h^*}+\|p-q^\ell\|_{L^2(\gamma)})\|\pt\bv\|_{H^1_h(\gamma)}.
\end{align*}
We then apply the inf-sup condition \eqref{equ:MainInfSup} and the approximation
properties of $Q_h$ to conclude \eqref{eqn:PL2error}.
\end{proof}

\begin{remark}
The use of Gauss-Lobatto nodes in the
definition of the finite element space stated
in Theorem \ref{thm:EnergyEst} is the same condition given in \cite{DemlowNeilan25}.
Numerical experiments given in \cite{DemlowNeilan25}
indicate the necessity of using Gauss-Lobatto nodes
to achieve optimal-order convergence, e.g.,
the use of equidistant DOFs generally lead to sub-optimal
convergence for $r\ge 3$.
\end{remark}

\begin{theorem}\label{thm:EnergyEst2}
Under the same assumptions as Theorem \ref{thm:EnergyEst},
there holds
\begin{align*}
\|\bu-\pt\bu_h\|_{L^2(\gamma)}
&\lesssim (h^s  +h^{k+1}+h^{2k-1})\|\bu\|_{H^s(\gamma)} + \|{\bm f}-{\bm F}_h^\ell\|_{X_h^*}+(h^{k+1}+h^{2k-1}) \|{\bm F}_h^\ell\|_{X_h^*}\\
\end{align*}
\end{theorem}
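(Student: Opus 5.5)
We argue by an Aubin--Nitsche duality argument carried out on the exact surface $\gamma$, using the divergence-free structure of the discrete velocity. Set $\bm e = \bu - \pt\bu_h$. Since $\pt\bu_h$ is pointwise divergence free on $\gamma$ (Piola transforms preserve the divergence and ${\rm div}_{\Gamma_h}\bu_h\equiv 0$ by Lemma~\ref{lem:DivFreeProp}), $\bm e$ is divergence free, though only tangential and $\bH({\rm div})$-conforming. We take the dual surface Stokes problem: find $(\bz,s)\in \bH^1_T(\gamma)\times L^2_0(\gamma)$ with
\[
a(\bw,\bz) - \int_\gamma ({\rm div}_\gamma \bw)s = \int_\gamma \bm e\cdot \bw,\qquad \int_\gamma({\rm div}_\gamma \bz)\rho=0 .
\]
We use elliptic regularity on the smooth surface, $\|\bz\|_{H^2(\gamma)}+\|s\|_{H^1(\gamma)}\lesssim \|\bm e\|_{L^2(\gamma)}$.

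\textbf{Step 1 (error representation).} Integrating by parts elementwise on $\bp(K)$ with $\bw=\bm e$, and using ${\rm div}_\gamma \bm e=0$ elementwise together with normal continuity, the pressure term $s$ drops out. This gives
\[
\|\bm e\|^2_{L^2(\gamma)} = a(\bm e,\bz) - \sum_{e\in\calE_h^{\rm ct}}\int_{\bp(e)}{\rm Def}_\gamma\bz:\jump{\pt\bu_h}.
\]
The jump term is bounded by \eqref{eqn:KrisKross} with $s=2$, $m=1$ and $\bw=\bz$. This yields $(h^2+h^{2k-1})\|\bz\|_{H^2}\|\pt\bu_h\|_{H^1_h}$, and the last factor is bounded by $\|{\bm F}_h^\ell\|_{X_h^*}$. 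A sharper treatment is to apply \eqref{eqn:KrisKross} to $\pt\bu_h-\bu$ (the exact $\bu$ has no jumps), which gives $h\|\bz\|_{H^2}\|\bm e\|_{H^1_h}$ plus $h^{2k-1}$ terms.

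\textbf{Step 2 (Galerkin orthogonality).} Let $\bz_h\in\bX_h$ be a divergence-free approximation of $\ipt\bz$ with $\|\bz-\pt\bz_h\|_{H^1_h}\lesssim h\|\bz\|_{H^2}$. It exists by the inf-sup stability of Theorem~\ref{thm:MainStab} together with Lemma~\ref{lem:DivFreeProp}, since the Fortin-type correction of a Scott--Zhang interpolant is divergence free. Write $a(\bm e,\bz)=a(\bm e,\bz-\pt\bz_h)+a(\bm e,\pt\bz_h)$. The first term is $\lesssim h\|\bm e\|_{H^1_h}\|\bz\|_{H^2}$. For the second, we use the consistency identity from the proof of Theorem~\ref{thm:EnergyEst} with $\bv=\bz_h$:
\[
a(\bm e,\pt\bz_h)=\sum_{e}\int_{\bp(e)}{\rm Def}_\gamma\bu:\jump{\pt\bz_h}+\int_\gamma({\bm f}-{\bm F}_h^\ell)\cdot\pt\bz_h+G_h(\bu_h,\bz_h).
\]
The jump term is handled by \eqref{eqn:KrisKross} with $m=1$, using $\jump{\pt\bz_h}=\jump{\pt\bz_h-\bz}$, to get $(h^{s}+h^{2k-1})\|\bu\|_{H^s}\|\bz\|_{H^2}$. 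The data term is at most $\|{\bm f}-{\bm F}_h^\ell\|_{X_h^*}\|\pt\bz_h\|_{H^1_h}$.

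\textbf{Step 3 (main obstacle: the geometric term).} The term $G_h(\bu_h,\bz_h)$ only gives $h^k$ through \eqref{eqn:GeomEst1}, so we must extract $h^{k+1}$. We split
\[
G_h(\bu_h,\bz_h)=G_h(\bu_h-\ipt\bu,\bz_h)+G_h(\ipt\bu,\bz_h-\ipt\bz)+G_h(\ipt\bu,\ipt\bz).
\]
The first two pieces are bounded by \eqref{eqn:GeomEst1} by $h^k\cdot h\,(\|\bu\|_{H^s}+\|{\bm F}_h^\ell\|_{X_h^*}+\cdots)\|\bz\|_{H^2}$, using the energy estimate \eqref{eqn:H1error}. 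The third is $h^{k+1}\|\bu\|_{H^2}\|\bz\|_{H^2}$ by \eqref{eqn:GeomEst2}. The delicate point is that the $H^1$ error from Theorem~\ref{thm:EnergyEst} contains $h^k\|{\bm F}_h^\ell\|_{X_h^*}$ and $\|{\bm f}-{\bm F}_h^\ell\|_{X_h^*}$, which after multiplication by $h^k$ or $h$ must still fit within the stated bound. Collecting all terms, dividing by $\|\bm e\|_{L^2}$ and using regularity then yields the claimed estimate.
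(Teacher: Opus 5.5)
Your argument follows the paper's proof almost step for step. Your elementwise integration by parts gives the same error representation the paper obtains as $a(\pt\bu_h,\bvarphi)-a_h(\bu_h,\bvarphi_h)=\sum_e\int_{\bp(e)}{\rm Def}_\gamma\bvarphi:\jump{\pt\bu_h}$. You then use the same consistency identity for $a(\bm e,\pt\bz_h)$ and the same three-way split of $G_h$ via \eqref{eqn:GeomEst1}--\eqref{eqn:GeomEst2}. Using an arbitrary divergence-free $\bz_h\in\bX_h$, built with the inf-sup condition, instead of the discrete dual solution $\bvarphi_h$ is an inessential variation.

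\textbf{The step that fails.} The one step that does not work as written is the jump term $\sum_e\int_{\bp(e)}{\rm Def}_\gamma\bz:\jump{\pt\bu_h}$ in Step 1.
\begin{itemize}
\item Your legitimate application of \eqref{eqn:KrisKross} with $m=1$ leaves $h^2\|\pt\bu_h\|_{H^1_h(\gamma)}\|\bz\|_{H^2(\gamma)}\lesssim h^2\|{\bm F}_h^\ell\|_{X_h^*}\|\bm e\|_{L^2(\gamma)}$. This is weaker than the claimed estimate once $s>2$, since the statement only allows $h^s\|\bu\|_{H^s(\gamma)}$ and $(h^{k+1}+h^{2k-1})\|{\bm F}_h^\ell\|_{X_h^*}$.
\item Your ``sharper treatment'' applies \eqref{eqn:KrisKross} to $\pt\bu_h-\bu$, but that estimate is only available for $\bv\in\bV_h$. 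It rests on the Gauss--Lobatto weak continuity and inverse estimates of the discrete function, and its right-hand side measures the same discrete function whose jump appears.
\item Knowing $\jump{\bu}=0$ lets you rewrite the left-hand side. It does not let you replace $\|\pt\bu_h\|$ by $\|\pt\bu_h-\bu\|$ on the right.
\end{itemize}
The same caveat applies to writing $\jump{\pt\bz_h}=\jump{\pt\bz_h-\bz}$ in Step 2, though there it is simply unnecessary. Applying \eqref{eqn:KrisKross} directly to $\bz_h\in\bV_h$ with $m=1$ already gives $h^s\|\bu\|_{H^s(\gamma)}\|\pt\bz_h\|_{H^1_h(\gamma)}$.

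\textbf{The fix.} What the paper does is keep $\bv=\bu_h$ but raise $m$.
\begin{itemize}
\item With $\bw=\bz$ (regularity index $2$ in \eqref{eqn:KrisKross}) and $m=s-1$, you get $\big(h^{s}\|\pt\bu_h\|_{H^{s-1}_h(\gamma)}+h^{2k-1}\|\pt\bu_h\|_{L^2(\gamma)}\big)\|\bz\|_{H^2(\gamma)}$.
\item Insert an interpolant $\bI_h\bu\in\bV_h$ and use inverse estimates to get $\|\pt\bu_h\|_{H^{s-1}_h(\gamma)}\lesssim\|\bu\|_{H^{s}(\gamma)}+h^{2-s}\|\bm e\|_{H^1_h(\gamma)}$.
\item The jump term is then bounded by $\big(h^s\|\bu\|_{H^s(\gamma)}+h^2\|\bm e\|_{H^1_h(\gamma)}+h^{2k-1}\|{\bm F}_h^\ell\|_{X_h^*}\big)\|\bm e\|_{L^2(\gamma)}$, which fits the claimed bound after using \eqref{eqn:H1error}.
\end{itemize}
Equivalently, split $\bu_h=(\bu_h-\bI_h\bu)+\bI_h\bu$ and apply \eqref{eqn:KrisKross} to each discrete piece, with $m=1$ and $m=s-1$ respectively. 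With this repair the rest of your bookkeeping goes through. That includes your check that the term $h^k\|\bm e\|_{H^1_h(\gamma)}$ from the first $G_h$ piece stays within the bound; note it is not literally $h^{k+1}$ times data.
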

\begin{proof}
The proof follows mostly from the arguments in \cite[Theorem 5.5]{DemlowNeilan25}, so we only
sketch the main points.
We set $(\bvarphi,s)\in \bH^1_T(\gamma)\times {L}_0^2(\gamma)$ satisfy the dual problem
\[
a(\bv,\bvarphi) +b(\bv,s)+b(\bvarphi,q) = \int_\gamma (\bu-\pt \bu_h)\cdot \bv\qquad \forall (\bv,q)\in \bH^1_T(\gamma)\times {L}_0^2(\gamma),
\]
and let $(\bvarphi_h,s_h)\in \bV_h\times Q_h$ denote 
the corresponding finite element solution. We then write
\begin{equation}
\label{eqn:L2PStart}
\begin{split}
    \|\bu-\pt\bu_h\|_{L^2(\gamma)}^2 
    &= a(\bu-\pt \bu_h,\bvarphi-\pt \bvarphi_h)+a(\bu-\pt \bu_h,\pt \bvarphi_h)+[a(\pt \bu_h,\bvarphi) - a_h(\bu_h,\bvarphi_h)]\\
    &\lesssim h \|\bu-\pt \bu_h\|_{H^1_h(\gamma)} \|\bu-\pt \bu_h\|_{L^2(\gamma)}+a(\bu-\pt \bu_h,\pt \bvarphi_h)+[a(\pt \bu_h,\bvarphi) - a_h(\bu_h,\bvarphi_h)].
\end{split}
\end{equation}
We have
\begin{align*}
a(\bu-\pt \bu_h,\bvarphi_h)  
&= \big[a(\bu,\pt \bvarphi_h) - \int_\gamma {\bm f}\cdot \pt \bvarphi_h\big]
+\int_\gamma ({\bm f}-{\bm F}_h^\ell)\cdot \pt \bvarphi_h - G_h(\bu_h,\bvarphi_h)\\
& = \sum_{e\in \calE_h^{\rm ct}} \int_{e^\gamma} {\rm Def}_\gamma \bu:\jump{\bvarphi_h}+\int_\gamma ({\bm f}-{\bm F}_h^\ell)\cdot \pt \bvarphi_h \\
&\qquad- \left(G(\ipt \bu,\ipt \bvarphi) + G(\ipt \bu,\bvarphi_h-\ipt \bvarphi) + G(\bu_h-\ipt \bu,\bvarphi_h)\right)
\end{align*}
where we used that $\pt\bvarphi_h$ is divergence free in the last equality.
Thus, applying \eqref{eqn:KrisKross} (with $m=2$) and the geometric consistency results \eqref{eqn:GeomEst1}--\eqref{eqn:GeomEst2} we have
\begin{equation}
\label{eqn:Cons1}
\begin{split}
a(\bu-\pt \bu_h,\bvarphi_h)  
&\lesssim \left((h^{s}+h^{2k-1}) \|\bu\|_{H^s(\gamma)}
+ \|{\bm f}-{\bm F}_h^\ell\|_{X_h^*}\right) \|\pt \bvarphi_h\|_{H^2_h(\gamma)}\\
&\hspace{-0.5cm} + h^{k+1} \|\bu\|_{H^2(\gamma)} \|\bvarphi\|_{H^2(\gamma)} + h^k \|\bu\|_{H^1(\gamma)} \|\bvarphi-\pt \bvarphi_h\|_{H^1_h(\gamma)} + h^k \|\bu-\pt \bu_h\|_{H^1_h(\gamma)} \|\pt \bvarphi\|_{H^1_h(\gamma)}\\
&\lesssim \left(\big(h^{s} +h^{k+1}+h^{2k-1}\big)\|\bu\|_{H^s(\gamma)} +\|{\bm f}-{\bm F}_h^\ell\|_{X_h^*} +h^k \|\bu-\pt\bu_h\|_{H^1_h(\gamma)}\right) \|\bu-\pt \bu_h\|_{L^2(\gamma)}.
\end{split}
\end{equation}
Next, by the exact same arguments as in \cite[(5.23)]{DemlowNeilan25}, 
and by applying \eqref{eqn:KrisKross} we obtain
\begin{align*}
a(\pt \bu_h,\bvarphi) - a_h(\bu_h,\bvarphi_h) 
&= \sum_{e\in \calE_h^{\rm ct}} \int_{e^\gamma} {\rm Def}_\gamma \bvarphi:\jump{\pt \bu_h}\\
&\lesssim (h^{s}\|\pt \bu_h\|_{H^{s-1}_h(\gamma)}+h^{2k-1}\|\bu_h\|_{L^2(\gamma)})\|\bvarphi\|_{H^2(\gamma)}.
\end{align*}
Standard arguments show $\|\pt \bu_h\|_{H^{s-1}_h(\gamma)}\lesssim \|\bu\|_{H^{s-1}(\gamma)}+h^{2-s}\|\bu-\pt\bu_h\|_{H^1_h(\gamma)}$,
and therefore
\begin{align}\label{eqn:Cons2}
a(\pt \bu_h,\bvarphi) - a_h(\bu_h,\bvarphi_h) 
&\lesssim (h^s\|\bu\|_{H^{s-1}(\gamma)} +h^2 \|\bu-\pt\bu_h\|_{H^1_h(\gamma)} +h^{2k-1}\|{\bm F}_h^\ell\|_{X_h^*})
\|\bu-\pt \bu_h\|_{L^2(\gamma)}.
\end{align}
Applying \eqref{eqn:Cons1} and \eqref{eqn:Cons2} to \eqref{eqn:L2PStart} gets
\begin{align*}
\|\bu-\pt \bu_h\|_{L^2(\gamma)}
&\lesssim h\|\bu-\pt\bu_h\|_{H^1_h(\gamma)}
 + (h^s+h^{k+1}+h^{2k-1})\|\bu\|_{H^s(\gamma)}+\|{\bm f}-{\bm F}_h^\ell\|_{X_h^*}
+h^{2k-1}\|{\bm F}_h^\ell\|_{X_h^*}\\
%
 &\lesssim h\left((h^{s-1}+h^{2k-1}) \|\bu\|_{H^s(\gamma)} + \|{\bm f}-{\bm F}_h^\ell\|_{X_h^*} +h^k \|{\bm F}_h^\ell\|_{X_h^*}\right)\\
 &\qquad + (h^s+h^{k+1}+h^{2k-1})\|\bu\|_{H^s(\Omega)}+\|{\bm f}-{\bm F}_h^\ell\|_{X_h^*}+h^{2k-1}\|{\bm F}_h^\ell\|_{X_h^*}\\
 &\lesssim (h^s  +h^{k+1}+h^{2k-1})\|\bu\|_{H^s(\gamma)} + \|{\bm f}-{\bm F}_h^\ell\|_{X_h^*}+(h^{k+1}+h^{2k-1}) \|{\bm F}_h^\ell\|_{X_h^*}\\
\end{align*}
\end{proof}

\begin{remark}\label{rem:PressureR}[Pressure Robustness]
A well-known property of divergence-free methods
for the Euclidean Stokes problem is their pressure-robustness, i.e.,
the discrete velocity solution is independent of the pressure variable \cite{JohnEtal17}.
However, for the surface Stokes problem, pressure-robustness is not guaranteed
due to geometric and source approximation as we now explain.

Assuming the solution $(\bu,p)$ is sufficiently
smooth, and assuming ${\bm f}_h = {\bm P}_h{\bm f}^e$ for some linear operator ${\bm P}_h$,  have for all $\bv\in \bX_h$,
\begin{equation}\label{eqn:PRRemark}
 \begin{split}
 \int_\gamma ({\bm f}-{\bm F}_h^\ell)\cdot \pt\bv 
 & = \int_\gamma (-(\bPi{\rm div}_\gamma {\rm Def}_\gamma \bu+\bu)\cdot \pt \bv-\int_{\Gamma_h} {\bm f}_h\cdot \bv\\
 & = \int_\gamma (-(\bPi{\rm div}_\gamma {\rm Def}_\gamma \bu+\bu)\cdot \pt \bv+\int_{\Gamma_h} {\bm P}_h
 (\bPi{\rm div}_\gamma {\rm Def}_\gamma \bu+\bu)^e \cdot \bv+\int_{\Gamma_h} {\bm P}_h(\nab_\gamma p)^e \cdot \bv,
 \end{split}
 \end{equation}
where we used the fact $\pt \bv$ is divergence-free (and so, orthogonal to gradients).
Consequently, due to the term $\|{\bm f}-{\bm F}_h^\ell\|_{X_h^*}$ in the error estimate
\eqref{eqn:H1error} we see that pressure robustness of method \eqref{eqn:FEM} is guaranteed if
the operator ${\bm P}_h$ is chosen such that
\[
\int_{\Gamma_h} {\bm P}_h(\nab_\gamma p)^e \cdot \bv=0\qquad \forall \bv\in \bX_h.
\]
This property holds if, e.g., ${\bm P}_h$ maps surface gradients (with respect to $\gamma$) to 
surface gradients (with respect to $\Gamma_h$). 
\end{remark}

\section{Numerical Experiments}\label{sec-numerics}
This section contains three numerical experiments: approximations of a divergence-free function and a no-flow function on the unit sphere, as well as a divergence-free function on a torus. For these examples we take the isoparametric case with $k=r$. We use a MATLAB code built on top of the iFEM library \cite{Ch09PP} as in \cite{DemlowNeilan24} with modifications to employ the Scott-Vogelius pair, notably changing DOFs to make a discontinuous pressure space on the Clough-Tocher split. We run tests with Scott-Vogelius quadratic ($\bbP_2-\bbP_1$), cubic ($\bbP_3-\bbP_2)$, and quartic ($\bbP_4-\bbP_3$) elements, with degrees of freedom at standard Lagrange nodes with Gauss-Lobatto nodes as edge degrees of freedom. Note that this is an implementation of $\bV_h$ and $Q_h$, not the macroelement spaces $\bVmac_h$ and $Q_h^{\rm m}$.


To compute the error, ideally one can map the true solution $\bu$ to the discrete surface $\Gamma_h$ using a Piola transform with respect to the inverse of the closest point projection. However, an explicit formula for the closest point projection is not available in general. 
Assuming $\gamma$ is given as the level set of a smooth function $\bPhi$, i.e.,
$\gamma=\{x\in\R^3:\bPhi(x)=0\}$, we make the following 
approximations (cf.~\cite{DemlowDziuk07}):
\[ 
\tilde{\bnu}(x)=\frac{\nabla \bPhi(x)}{|\nabla \bPhi(x)|},\quad \tilde{d}(x)=\frac{\bPhi(x)}{|\nabla \bPhi(x)|},\quad \tilde{\bp}(x)=x-\tilde{d}(x)\tilde{\bnu}(x).
\]
Then there holds
\[|d(x)-\tilde{d}(x)|=\calO(h^{k+1}), \]
and 
\[|\bnu(x)-\tilde{\bnu}(x)|=\left|\frac{\nabla \bPhi(\bp(x))}{|\nabla \bPhi(\bp(x))|}-\frac{\nabla \bPhi(x)}{|\nabla \bPhi(x)} \right|\les |\bp(x)-x|=\calO(h^{k+1}). \]
Note that $\tilde \bp$ may not lie on the true surface, but
we have $|\bp(x)-\tilde{\bp}(x)|=\calO(h^{k+1})$. 
A similar calculation shows
\begin{equation}\label{bnuTilde}
|\bnu(x)-\tilde{\bnu}(\tilde \bp(x))| = \calO(h^{k+1}).
\end{equation}

In the numerical experiments
below the expression for the velocity $\bu:\gamma \to \mathbb{R}^3$
is well defined in $\mathbb{R}^3$,
and we denote this expression by  $\bu^{ce}:\mathbb{R}^3\to \mathbb{R}^3$.
With the approximations above,
we substitute the true Piola transform $\calP_{p^{-1}}\bu$ with the computable approximation
\[\tilde{\calP}_{\bp^{-1}}\bu:=\left({\bf I}_3-\frac{(\tilde{\bnu}\circ \tilde{\bp})\otimes \bnu_h}{(\tilde{\bnu}\circ \tilde{\bp})\cdot \bnu_h} \right)(\bu^{ce}\circ \tilde{\bp}),
\]
and note that $|\calP_{\bp^{-1}} \bu - \tilde\calP_{\bp^{-1}} \bu| = \calO(h^{k+1}).$

In the set of numerical experiments, we set 
 $\bm f_h(x)=\tilde{\calP}_{\bp^{-1}}\bm f(x)$
 , that is, quadrature points on the discrete surface $\Gamma_h$ are 
mapped to an approximation to $\gamma$ with $\tilde{\bp}$, $\bm f^{ce}$ is evaluated, and the result is mapped back to $\Gamma_h$ with an approximate Piola transform.
\begin{lemma}\label{lem:ComputableErrors}
Let $\tilde{\calP}_{\bp^{-1}}\bu$
and $\tilde{\calP}_{\bp^{-1}}{\bm f}$ be the approximate Piola transforms
of $\bu$ and ${\bm f}$, respectively. Then in the isoparametric case $k=r$, 
there holds for $(\bu,p)\in \bH^{r+1}(\gamma)\times H^r(\gamma)$,
\begin{equation}
\label{eqn:ComputableErrors}
\begin{split}
\|\tilde{\calP}_{\bp^{-1}}\bu- \bu_h\|_{H^1_h(\Gamma_h)}& \lesssim h^r\|\bu\|_{H^{r+1}(\gamma)}+h^{r+1}\|p\|_{H^1(\gamma)},\\
\|p\circ \tilde \bp - p_h\|_{L^2(\Gamma_h)}& \lesssim h^r(\|\bu\|_{H^{r+1}(\gamma)}+\|p\|_{H^r(\gamma)}),\\
\|\tilde{\calP}_{\bp^{-1}}\bu- \bu_h\|_{L^2(\Gamma_h)}&\lesssim h^{r+1}(\|\bu\|_{H^{r+1}(\gamma)}+\|p\|_{H^r(\gamma)}).
\end{split}
\end{equation}
\end{lemma}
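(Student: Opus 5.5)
The plan is to reduce each computable error in \eqref{eqn:ComputableErrors} to the corresponding exact-lift error bounded in Theorems~\ref{thm:EnergyEst} and \ref{thm:EnergyEst2}, plus perturbation terms of order $h^{k+1}=h^{r+1}$. We split
\[
\tilde{\calP}_{\bp^{-1}}\bu-\bu_h=(\tilde{\calP}_{\bp^{-1}}\bu-\ipt\bu)+(\ipt\bu-\bu_h),
\]
and use the norm equivalence $\|\bv_0\|_{H^\ell(K)}\approx\|\pt\bv_0\|_{H^\ell(\bp(K))}$ together with $\pt\ipt\bu=\bu$. This gives $\|\ipt\bu-\bu_h\|_{H^\ell_h(\Gamma_h)}\approx\|\bu-\pt\bu_h\|_{H^\ell_h(\gamma)}$ for $\ell=0,1$. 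For the first term, the pointwise bound $|\calP_{\bp^{-1}}\bu-\tilde\calP_{\bp^{-1}}\bu|=\calO(h^{k+1})$ handles the $L^2$ part. For $H^1$ we need the analogous bound on derivatives, at least $\calO(h^{k})$. We obtain it by differentiating elementwise the explicit formulas: $\tilde\bp-\bp$, $\tilde\bnu\circ\tilde\bp-\bnu$ and $\tilde d-d$ are smooth functions vanishing to the appropriate order in $d=\calO(h^{k+1})$. Their tangential derivatives along $\Gamma_h$ are therefore $\calO(h^{k})$ at worst; in fact they are $\calO(h^{k+1})$, since $\Phi$ and $d$ agree to second order in $d$. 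The pressure term $p\circ\tilde\bp-p^e$ is handled the same way, with bound $h^{k+1}\|p\|_{W^{1,\infty}}$.

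The main work is showing that the data-approximation terms in Theorems~\ref{thm:EnergyEst}--\ref{thm:EnergyEst2} are of the right order for ${\bm f}_h=\tilde\calP_{\bp^{-1}}{\bm f}$. With $k=r$ and $s=r+1$, the geometric terms $h^{2k-1}$ and $h^{k+1}$ are already $\le h^r$ and $\le h^{r+1}$, respectively, for $r\ge2$. Using \eqref{eqn:FhDeff}, ${\bm F}_h^\ell$ differs from ${\bm f}$ only through replacing $\ipt$ by $\tilde\calP_{\bp^{-1}}$. Since $\bm F_h^\ell$ with the exact $\ipt$ reproduces ${\bm f}$ up to the factor $\mu_h$, we get
\[
{\bm f}-{\bm F}_h^\ell=\calO(h^{k+1})\,|{\bm f}|+(1-\mu_h){\bm f}\text{-type terms},
\]
and $|1-\mu_h|\lesssim h^{k+1}$ because $\bnu\cdot\bnu_h=1-\calO(h^{2k})$ and $d=\calO(h^{k+1})$. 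Consequently $\|{\bm f}-{\bm F}_h^\ell\|_{L^2(\gamma)}\lesssim h^{r+1}\|{\bm f}\|_{L^2}$, and ${\bm f}=-\bPi{\rm div}_\gamma{\rm Def}_\gamma\bu+\bu+\nab_\gamma p$ is bounded by $\|\bu\|_{H^2}+\|p\|_{H^1}$.

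The pressure dependence is where the stated velocity bound is delicate, and this is the step I expect to be the main obstacle. In the $H^1$ velocity estimate, only the $X_h^*$ norm enters, and Remark~\ref{rem:PressureR} shows that the pressure contributes only through $\int_{\Gamma_h}{\bm P}_h(\nab_\gamma p)^e\cdot\bv$ for divergence-free $\bv$. We write this as $\int_\gamma(\nab_\gamma p-({\bm P}_h\nab_\gamma p)^{\ell,\text{Piola}})\cdot\pt\bv$, using $\int_\gamma\nab_\gamma p\cdot\pt\bv=0$. It is then bounded by $h^{r+1}\|p\|_{H^1}\|\pt\bv\|_{L^2}$, which produces exactly the term $h^{r+1}\|p\|_{H^1(\gamma)}$ in the first estimate. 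The remaining velocity part of ${\bm f}$ gives $h^{r+1}\|\bu\|_{H^2}$, and $\|{\bm F}_h^\ell\|_{X_h^*}\lesssim\|\bu\|_{H^2}+\|p\|_{H^1}$ multiplies $h^k$. That last product would spoil the first bound, so I would sharpen it. In the proof of Theorem~\ref{thm:EnergyEst}, the $h^k\|{\bm F}_h^\ell\|_{X_h^*}$ term arises from $G_h(\bu_h,\bv)$. I would instead split it as $G_h(\ipt\bu,\bv)+G_h(\bu_h-\ipt\bu,\bv)$ and use the fact that $\bu$ does not depend on $p$. This gives $h^k\|\bu\|_{H^1}$ plus a term absorbed for small $h$, so the resulting bound carries $p$ only through $h^{r+1}\|p\|_{H^1}$.

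For the pressure and $L^2$ velocity estimates, the stated bounds allow $\|p\|_{H^r}$, so we simply insert the data bounds into \eqref{eqn:PL2error} and Theorem~\ref{thm:EnergyEst2}. Then we convert $\|p-p_h^\ell\|_{L^2(\gamma)}$ to $\|p^e-p_h\|_{L^2(\Gamma_h)}$ via $\mu_h\approx1$, and add the $\calO(h^{r+1})$ discrepancy between $p\circ\tilde\bp$ and $p^e$.
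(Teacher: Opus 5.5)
Your proposal is correct. Its overall structure is the paper's: reduce to Theorems~\ref{thm:EnergyEst} and \ref{thm:EnergyEst2} by a change of variables, then show the data terms are $\calO(h^{r+1})$ using ${\bm f}=-\bPi{\rm div}_\gamma{\rm Def}_\gamma\bu+\bu+\nab_\gamma p$.

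The genuine difference is how you neutralize the term $h^k\|{\bm F}_h^\ell\|_{X_h^*}$ in \eqref{eqn:H1error}. You reopen the proof of Theorem~\ref{thm:EnergyEst}, write $G_h(\bu_h,\bv)=G_h(\ipt\bu,\bv)+G_h(\bu_h-\ipt\bu,\bv)$, and absorb the second piece for small $h$. The energy estimate then carries $h^k\|\bu\|_{H^1(\gamma)}$ in place of $h^k\|{\bm F}_h^\ell\|_{X_h^*}$. This is valid; it is the same splitting the paper uses inside the proof of Theorem~\ref{thm:EnergyEst2}. What it buys is a slightly sharper, visibly pressure-independent version of Theorem~\ref{thm:EnergyEst}.

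The paper instead uses Theorem~\ref{thm:EnergyEst} as a black box and shows the dual norm is already pressure-insensitive. It writes $\|{\bm F}_h^\ell\|_{X_h^*}\le\|{\bm f}\|_{X_h^*}+\|{\bm f}-{\bm F}_h^\ell\|_{X_h^*}$. Here $\|{\bm f}\|_{X_h^*}$ does not see $p$, because $\int_\gamma\nab_\gamma p\cdot\pt\bv=0$ for $\bv\in\bX_h$. The pressure part of ${\bm f}_h$ is controlled via $\int_{\Gamma_h}(\tilde\calP_{\bp^{-1}}\nab_\gamma p)\cdot\bv=\int_{\Gamma_h}(\tilde\calP_{\bp^{-1}}\nab_\gamma p-\nab_{\Gamma_h}p^e)\cdot\bv=\calO(h^{k+1})\|p\|_{H^1(\gamma)}\|\bv\|_{L^2(\Gamma_h)}$. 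Hence $h^k\|{\bm F}_h^\ell\|_{X_h^*}\lesssim h^r\|\bu\|_{H^2(\gamma)}+h^{2r+1}\|p\|_{H^1(\gamma)}$, with no change to the theorem.

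You actually had this ingredient. You applied the divergence-free orthogonality to $\|{\bm f}-{\bm F}_h^\ell\|_{X_h^*}$, where it is not needed, since that term is already bounded by $\|{\bm f}-{\bm F}_h^\ell\|_{L^2(\gamma)}\lesssim h^{r+1}(\|\bu\|_{H^2(\gamma)}+\|p\|_{H^1(\gamma)})$. Applied to $\|{\bm F}_h^\ell\|_{X_h^*}$ instead, it would have spared you the modification.

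Your explicit $H^1_h$ treatment of $\tilde\calP_{\bp^{-1}}\bu-\ipt\bu$ fills in what the paper calls ``a change of variables.'' The relevant fact there is $\tilde d-d=\calO(d^2)$, not a relation between the level-set function and $d$. In any case, your fallback $\calO(h^k)$ derivative bound already suffices.
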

\begin{proof}
By a change of variables and Theorems \ref{thm:EnergyEst} and \ref{thm:EnergyEst2},
it suffices to estimate $\|{\bm f}-{\bm F}_h^\ell\|_{X_h^*}$, $\|{\bm F}_h^\ell\|_{X_h^*}$,
and $\|{\bm f}-{\bm F}_h^\ell\|_{L^2(\gamma)}$.

Using $d = \calO(h^{k+1})$ and $|1-\bnu_h\cdot \bnu| = \calO(h^{2k}) = \calO(h^{k+1})$, 
 $|1-(\tilde \bnu\circ \tilde \bp)\cdot \bnu_h| = \calO(h^{k+1})$, and 
 $|1-(\tilde \bnu\circ \tilde \bp)\cdot \bnu| = \calO(h^{k+1})$
we have (cf.~\eqref{eqn:FhDeff})
\begin{align*}
{\bm F}_h 
&= \big[{\bf I}_3 - {\bnu_h\otimes \bnu}\big] {\bm f}_h + \calO(h^{k+1} \|{\bm f}_h\|_{L^2(\Gamma_h)})\\
&= \big[{\bf I}_3 - {\bnu_h\otimes \bnu}\big]
\big[{\bf I}_3
- {(\tilde \bnu\circ \tilde \bp) \otimes \bnu_h}\big]({\bm f}^{ce}\circ \tilde \bp)+\calO(h^{k+1}\|{\bm f}\|_{L^2(\gamma)})\\
& = \big[{\bf I}_3 - \bnu_h\otimes \bnu- (\tilde \bnu\circ \tilde \bp)\otimes \bnu_h+\bnu_h\otimes \bnu_h\big]({\bm f}^{ce}\circ \tilde \bp)+\calO(h^{k+1}\|{\bm f}\|_{L^2(\gamma)} )\\
& = \big[{\bf I}_3 - \bnu_h\otimes (\bnu-\bnu_h)- (\tilde \bnu\circ \tilde \bp)\otimes (\bnu_h-\bnu)\big]({\bm f}^{ce}\circ \tilde \bp)+\calO(h^{k+1}\|{\bm f}\|_{L^2(\gamma)}),
\end{align*}
where we used $|\bnu\cdot {\bm f}^{ce}\circ \tilde \bp| = |\bnu\cdot ({\bm f}^{ce}\circ \tilde \bp - {\bm f}\circ \bp)| = \calO(h^{k+1})$
in the last equality. It then follows from \eqref{bnuTilde} 
that $\|{\bm f}^{ce}\circ \tilde \bp - {\bm F}_h\|_{L^2(\Gamma_h)} \lesssim h^{k+1} \|{\bm f}\|_{L^2(\gamma)}$, and therefore
\begin{equation}\label{eqn:fFHL2}
    \|{\bm f}-{\bm F}_h^\ell\|_{L^2(\gamma)}\lesssim h^{k+1}\|{\bm f}\|_{L^2(\gamma)}\lesssim h^{k+1}(\|\bu\|_{H^2(\gamma)}+\|p\|_{H^1(\gamma)}).
\end{equation}

We also find that, using a slight variant of \eqref{eqn:PRRemark},
\begin{align*}
    \|{\bm F}_h^\ell\|_{X_h^*}
    &\le \|{\bm f}\|_{X_h^*}+ \|{\bm f}-{\bm F}_h^\ell\|_{X_h^*}\lesssim  \|{\bm f}\|_{X_h^*}+h^{k+1} \|\bu\|_{H^2(\gamma)}+ 
    \sup_{\bv\in \bX_h} \frac{\int_{\Gamma_h} (\tilde \calP_{\bp^{-1}} \nab_\gamma p)\cdot \bv}{\|\bv\|_{H^1_h(\Gamma_h)}}
\end{align*}
A simple calculation shows that, for divergence-free $\bv\in \bX_h$,
there holds 
\begin{align*}
    \left|\int_{\Gamma_h} (\tilde \calP_{\bp^{-1}} \nab_\gamma p)\cdot \bv\right| = 
\left|\int_{\Gamma_h} (\tilde \calP_{\bp^{-1}} \nab_\gamma p- \nab_{\Gamma_h} p^e)\cdot \bv\right|\lesssim h^{k+1}\|p\|_{H^1(\gamma)}\|\bv\|_{L^2(\Gamma_h)},
\end{align*}
and therefore
\begin{align}\label{eqn:fFhDual}
\|{\bm f}-{\bm F}_h^\ell\|_{X_h^*}+
\|{\bm F}_h^\ell\|_{X_h^*}
\lesssim  \|{\bm f}\|_{X_h^*}+h^{k+1}(\|\bu\|_{H^2(\gamma)}+ \|p\|_{H^{1}(\gamma)}).
\end{align}
Finally, we apply the estimates \eqref{eqn:fFHL2}--\eqref{eqn:fFhDual}
to Theorems  \ref{thm:EnergyEst} and \ref{thm:EnergyEst2} 
with $k=r$ and use $r\ge 2$ to obtain 
\eqref{eqn:ComputableErrors}.
\end{proof}

%


%

\subsection*{Test 1}
For the first example, we take $\gamma$
to be the unit sphere, and
\[\bu(x) =\begin{pmatrix}-x_1x_3e^{x_2}+4x_2^2x_3-2x_3^3\\ x_3(-4x_1x_2+e^{x_2})\\(x_1^2-x_2)e^{x_2}+2x_1x_3^2 \end{pmatrix},\qquad p(x)=x_1x_2^3+x_3,\]
which satisfy the divergence-free and mean-zero constraint, respectively. In Table \ref{tab:convergence} we provide the errors of the velocity, velocity gradient, pressure, and divergence
for polynomial degrees $r\in \{2,3,4\}$. We observe optimal convergence of $\calO(h^{r+1})$ for the velocity and $\calO(h^r)$ for the velocity gradient and pressure in the isoparametric case $k=r$, aligning with the theoretical results in Lemma \ref{lem:ComputableErrors}.
Note also that the approximate solution is  divergence-free up to machine precision.

\subsection*{Test 2}
In the second example, we consider a no-flow solution ($\bu=0$) on the unit sphere with $p(x)=x_1e^{3x_2}-4x_3e^{x_2}$, which has mean zero. In Table \ref{tab:convergence_noflow} we provide the errors of the velocity, velocity gradient, pressure, and divergence for a no-flow example on the unit sphere.  Although the true velocity solution was identically zero, the approximation is not, nor is it near machine epsilon. This can be explained by the lack of pressure robustness due to the geometric error of the scheme, as explained in Remark \ref{rem:PressureR}.

In this experiment, we observe  optimal convergence 
$\calO(h^r)$ for the pressure approximation.
 In addition, we also observe 
 for the velocity gradient approximation
$\|\nab_{\Gamma_h} (\tilde\calP_{\bp^{-1}} \bu-\bu_h)\|_{\Gamma_h} = \calO(h^{r+1})$.
Both of these rates are consistent with the estimates given in Lemma \ref{lem:ComputableErrors}.
However, we also observe superconvergence for the $L^2$ error 
of the velocity with 
$\|\tilde \calP_{\bp^{-1}} \bu-\bu_h\|_{\Gamma_h} = \calO(h^{r+2})$,
which is one order higher than the estimate given in \eqref{eqn:ComputableErrors}.
This experiment indicates that the third estimate in Lemma \ref{lem:ComputableErrors}
may not be sharp.




\subsection*{Test 3}
For the third example, we generate a divergence-free function $\bu$ by taking the surface curl with respect to the torus $\gamma=\{x\in \bbR^3: (\sqrt{x_1^2+x_2^2}-4)^2+x_3^2=4 \}$ of the stream function $\phi(x)=\sin(x_1)+e^{x_2}$ and take $p(x)=x_1$.

In Table \ref{tab:convergence_torus}, we provide the relative errors of the velocity, velocity gradient, and pressure for a divergence-free function on the torus. Relative errors are used to account for the large magnitude of the original function. The divergence error is not relative, as the true divergence is identically zero. As in the previous two experiments the convergence rates appear optimal with $\calO(h^{r+1})$ for velocity and $\calO(h^r)$ for the velocity gradient and pressure, although there is significantly more degradation in the error. This may be due to round-off error from a larger system, as well as poor conditioning of the system. The approximate velocity and errors are graphed in Figure \ref{fig:graphs} in the case of three mesh refinements and $\bbP_2-\bbP_1$ spaces.

 {\small 
 \begin{table}[h!]
\centering
\caption{Test 1. Errors and rates of convergence on the unit sphere for the non-trivial divergence-free solution.
Here, $\|\cdot\|_{\Gamma_h}$ denotes the $L^2$ norm over $\Gamma_h$,
 $\tilde \bu = \tilde{\calP}_{\bp^{-1}}\bu$, and $\tilde p = p\circ \tilde \bp$.
}
\label{tab:convergence}
\begin{tabular}{ccllllllll}
\hline
$r$ & $h$ 
  & $\|\tilde\bu-\bu_h\|_{\Gamma_h}$ & rate 
  & $\|\nabla_{\Gamma_h} (\tilde\bu-\bu_h)\|_{\Gamma_h}$ & rate 
  & $\|\tilde p-p_h\|_{\Gamma_h}$ & rate 
  & $\| {\rm div }_{\Gamma_h}  \bu_h\|_{\Gamma_h}$ \\
\hline
\multirow{4}{*}{2}
 & 4.45e-01 & 2.2337e-01 & --     & 1.7845e+00 & --     & 1.8394e+00 & --     & 1.39e-13 \\
 & 2.05e-01 & 2.7751e-02 & 2.69 & 4.9670e-01 & 1.65 & 6.7901e-01 & 1.28 & 4.99e-14 \\
 & 1.03e-01 & 4.0127e-03 & 2.81 & 1.3420e-01 & 1.90 & 2.4056e-01 & 1.51 & 9.96e-14 \\
 & 5.11e-02 & 5.3511e-04 & 2.87 & 3.4645e-02 & 1.93 & 6.8550e-02 & 1.79 & 1.61e-13 \\
\hline
\multirow{4}{*}{3}
 & 4.57e-01 & 2.7469e-01 & --     & 2.3703e+00 & --     & 3.1448e+00 & --     & 1.16e-11 \\
 & 2.06e-01 & 1.0573e-02 & 4.09 & 2.6498e-01 & 2.75 & 3.6753e-01 & 2.69 & 8.01e-14 \\
 & 1.03e-01 & 5.8573e-04 & 4.18 & 3.2890e-02 & 3.02 & 4.7743e-02 & 2.95 & 1.53e-13 \\
 & 5.11e-02 & 3.5077e-05 & 4.01 & 4.0988e-03 & 2.97 & 6.0540e-03 & 2.94 & 2.80e-13 \\
\hline
\multirow{4}{*}{4}
 & 4.57e-01 & 2.8892e-02 & --     & 3.5492e-01 & --     & 5.1232e-01 & --     & 3.68e-13 \\
 & 2.06e-01 & 1.1148e-03 & 4.08 & 2.5515e-02 & 3.30 & 3.6641e-02 & 3.31 & 1.39e-13 \\
 & 1.03e-01 & 3.8319e-05 & 4.87 & 1.6922e-03 & 3.92 & 2.6104e-03 & 3.82 & 2.27e-13 \\
 & 5.11e-02 & 1.2368e-06 & 4.89 & 1.0795e-04 & 3.92 & 1.7095e-04 & 3.88 & 4.36e-13 \\
\hline
\end{tabular}
\end{table}
}
{\small 
\begin{table}[h!]
\centering
\caption{Test 2. Errors and rates of convergence on the unit sphere for the no-flow solution.}
\label{tab:convergence_noflow}
\begin{tabular}{ccllllllll}
\hline
$k$ & $h$ 
  & $\|\tilde\bu-\bu_h\|_{\Gamma_h}$ & rate 
  & $\|\nabla_{\Gamma_h} (\tilde{\bu}-\bu_h)\|_{\Gamma_h}$ & rate 
  & $\|\tilde p-p_h\|_{\Gamma_h}$ & rate 
  & $\| {\rm div }_{\Gamma_h}  \bu_h\|_{\Gamma_h}$ \\
\hline
\multirow{4}{*}{2}
 & 4.45e-01 & 1.1105e-03 & --     & 1.8851e-03 & --     & 7.4351e-01 & --     & 5.67e-18 \\
 & 2.05e-01 & 8.0977e-05 & 3.37 & 1.2856e-04 & 3.46 & 2.0874e-01 & 1.64 & 7.54e-19 \\
 & 1.03e-01 & 5.6581e-06 & 3.87 & 8.9333e-06 & 3.88 & 5.3350e-02 & 1.98 & 9.25e-20 \\
 & 5.11e-02 & 3.6368e-07 & 3.91 & 5.7375e-07 & 3.91 & 1.3416e-02 & 1.97 & 1.16e-20 \\
\hline
\multirow{4}{*}{3}
 & 4.57e-01 & 3.8649e-03 & --     & 6.8489e-03 & --     & 2.1878e-01 & --     & 2.56e-17 \\
 & 2.06e-01 & 7.3649e-05 & 4.97 & 1.3492e-04 & 4.93 & 2.7103e-02 & 2.62 & 2.04e-18 \\
 & 1.03e-01 & 4.5563e-06 & 4.02 & 7.3539e-06 & 4.20 & 3.4391e-03 & 2.98 & 1.22e-19 \\
 & 5.11e-02 & 2.8364e-07 & 3.95 & 4.4944e-07 & 3.98 & 4.3062e-04 & 2.96 & 1.49e-20 \\
\hline
\multirow{4}{*}{4}
 & 4.57e-01 & 8.3306e-04 & --     & 7.7039e-03 & --     & 2.5277e-02 & --     & 7.95e-18 \\
 & 2.06e-01 & 7.6050e-06 & 5.89 & 2.2547e-04 & 4.43 & 7.8496e-04 & 4.36 & 1.71e-19 \\
 & 1.03e-01 & 1.1710e-07 & 6.03 & 7.5069e-06 & 4.92 & 2.8031e-05 & 4.82 & 6.00e-21 \\
 & 5.11e-02 & 1.8171e-09 & 5.93 & 2.3808e-07 & 4.91 & 1.1416e-06 & 4.56 & 1.82e-22 \\
\hline
\end{tabular}
\end{table}
}
{\small 
\begin{table}[h!]
\centering
\caption{Test 3. Relative errors and rates of convergence on the torus for a non-trivial divergence-free solution.}
\label{tab:convergence_torus}
\begin{tabular}{ccllllllll}
\hline
$k$ & $h$ 
  & $\|\tilde\bu-\bu_h\|_{\Gamma_h}$ & rate 
  & $\|\nabla (\tilde \bu-\bu_h)\|_{\Gamma_h}$ & rate 
  & $\|\tilde p-p_h\|_{\Gamma_h}$ & rate 
  & $\|{\rm div}_{\Gamma_h}\  \bu_h\|_{\Gamma_h}$ \\
\hline
\multirow{4}{*}{2}
 & 1.69 & 1.5562e-01 & --     & 4.2444e-01 & --     & 6.9905e+00 & --     & 1.31e-11 \\
 & 7.91e-01 & 2.2517e-02 & 2.54 & 1.2114e-01 & 1.65 & 2.1854e+00 & 1.53 & 7.41e-12 \\
 & 3.82e-01 & 2.9603e-03 & 2.78 & 3.5016e-02 & 1.70 & 7.0296e-01 & 1.55 & 6.90e-12 \\
 & 1.89e-01 & 4.3856e-04 & 2.71 & 1.0075e-02 & 1.77 & 2.4189e-01 & 1.52 & 1.44e-11 \\
\hline
\multirow{4}{*}{3}
 & 1.78 & 1.9145e-01 & --     & 5.5096e-01 & --     & 8.2243e+00 & --     & 1.38e-10 \\
 & 8.03e-01 & 1.8453e-02 & 2.94 & 1.2460e-01 & 1.87 & 1.4255e+00 & 2.21 & 1.07e-08 \\
 & 3.83e-01 & 9.0754e-04 & 4.07 & 1.4272e-02 & 2.93 & 2.0147e-01 & 2.64 & 6.80e-09 \\
 & 1.89e-01 & 5.0656e-05 & 4.09 & 1.7516e-03 & 2.97 & 2.6396e-02 & 2.88 & 4.46e-08 \\
\hline
\multirow{4}{*}{4}
 & 1.71 & 1.0742e-01 & --     & 3.4331e-01 & --     & 5.6570e+00 & --     & 4.65e-10 \\
 & 8.00e-01 & 2.0565e-03 & 5.23 & 1.5965e-02 & 4.05 & 2.5649e-01 & 4.09 & 3.64e-09 \\
 & 3.83e-01 & 8.9782e-05 & 4.24 & 1.2590e-03 & 3.44 & 1.7368e-02 & 3.65 & 5.48e-09 \\
 & 1.89e-01 & 3.1836e-06 & 4.73 & 8.4639e-05 & 3.82 & 1.2295e-03 & 3.75 & 2.88e-08 \\
\hline
\end{tabular}
\end{table}
}

\begin{figure}[h]

  \centering
  \begin{subfigure}{0.45\textwidth}
    \includegraphics[width=\linewidth]{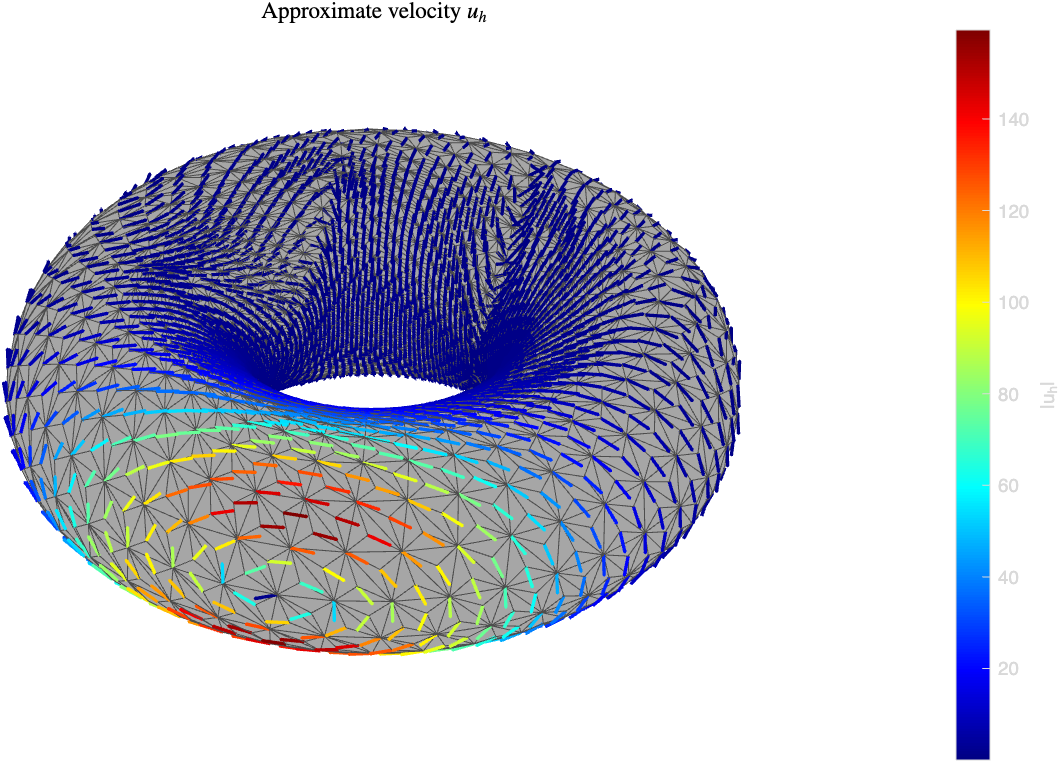}
    \caption{Approximate solution $\bu_h$ on torus
    with $r=2$ and $h=0.189$.}
  \end{subfigure}
  \hfill
  \begin{subfigure}{0.45\textwidth}
    \includegraphics[width=\linewidth]{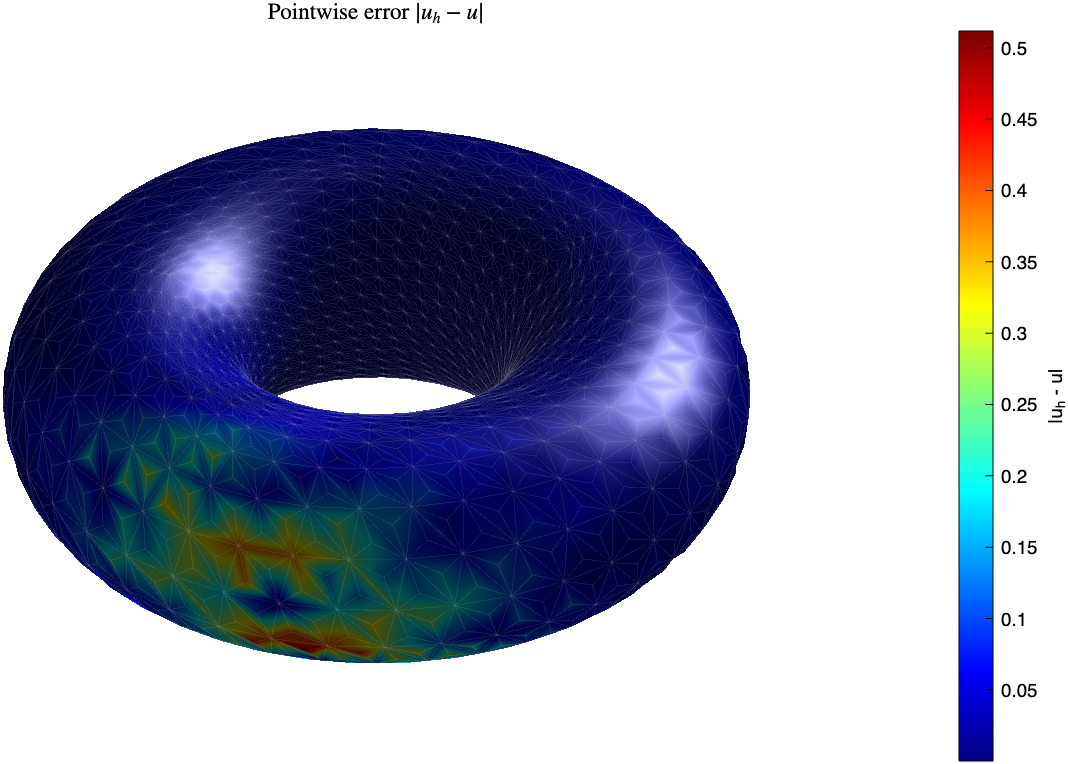}
    \caption{Pointwise errors $|\tilde \bu-\bu_h|$ on torus
    with $r=2$ and $h=0.189$.}
  \end{subfigure}
  \caption{Approximation of a divergence-free function on the torus}
  \label{fig:graphs}
\end{figure}

\bibliographystyle{siam}
\bibliography{literatur}

@article {BrubeckKirby26,
    AUTHOR = {Brubeck, Pablo D. and Kirby, Robert C.},
     TITLE = {F{IAT}: enabling classical and modern macroelements},
   JOURNAL = {ACM Trans. Math. Software},
  FJOURNAL = {Association for Computing Machinery. Transactions on
              Mathematical Software},
    VOLUME = {52},
      YEAR = {2026},
    NUMBER = {1},
     PAGES = {Art. 6, 28},
      ISSN = {0098-3500,1557-7295},
   MRCLASS = {65N30 (65M60 65Y15)},
  MRNUMBER = {5061631},
       DOI = {10.1145/3797879},
       URL = {https://doi.org/10.1145/3797879},
}

@article {GuzmanScott19,
    AUTHOR = {Guzm\'{a}n, Johnny and Scott, L. Ridgway},
     TITLE = {The {S}cott-{V}ogelius finite elements revisited},
   JOURNAL = {Math. Comp.},
  FJOURNAL = {Mathematics of Computation},
    VOLUME = {88},
      YEAR = {2019},
    NUMBER = {316},
     PAGES = {515--529},
      ISSN = {0025-5718,1088-6842},
   MRCLASS = {65N30 (65N12 65N85 76D07)},
  MRNUMBER = {3882274},
MRREVIEWER = {Huipo\ Liu},
       DOI = {10.1090/mcom/3346},
       URL = {https://doi.org/10.1090/mcom/3346},
}

@book {QinThesis,
    AUTHOR = {Qin, Jinshui},
     TITLE = {On the convergence of some low order mixed finite elements for
              incompressible fluids},
      NOTE = {Thesis (Ph.D.)--The Pennsylvania State University},
 PUBLISHER = {ProQuest LLC, Ann Arbor, MI},
      YEAR = {1994},
     PAGES = {158},
   MRCLASS = {Thesis},
  MRNUMBER = {2691498},
       URL =
              {http://gateway.proquest.com/openurl?url_ver=Z39.88-2004&rft_val_fmt=info:ofi/fmt:kev:mtx:dissertation&res_dat=xri:pqdiss&rft_dat=xri:pqdiss:9504277},
}

@article {SZhang05,
    AUTHOR = {Zhang, Shangyou},
     TITLE = {A new family of stable mixed finite elements for the 3{D}
              {S}tokes equations},
   JOURNAL = {Math. Comp.},
  FJOURNAL = {Mathematics of Computation},
    VOLUME = {74},
      YEAR = {2005},
    NUMBER = {250},
     PAGES = {543--554},
      ISSN = {0025-5718,1088-6842},
   MRCLASS = {65N30 (65F10 65N12 76D07 76M10)},
  MRNUMBER = {2114637},
MRREVIEWER = {J.\ W.\ Jerome},
       DOI = {10.1090/S0025-5718-04-01711-9},
       URL = {https://doi.org/10.1090/S0025-5718-04-01711-9},
}

@article {ChristHu18,
    AUTHOR = {Christiansen, Snorre H. and Hu, Kaibo},
     TITLE = {Generalized finite element systems for smooth differential
              forms and {S}tokes' problem},
   JOURNAL = {Numer. Math.},
  FJOURNAL = {Numerische Mathematik},
    VOLUME = {140},
      YEAR = {2018},
    NUMBER = {2},
     PAGES = {327--371},
      ISSN = {0029-599X,0945-3245},
   MRCLASS = {65N30 (58A12 76D07)},
  MRNUMBER = {3851060},
MRREVIEWER = {J\'{a}nos\ Kar\'{a}tson},
       DOI = {10.1007/s00211-018-0970-6},
       URL = {https://doi.org/10.1007/s00211-018-0970-6},
}

@article {SV85,
    AUTHOR = {Scott, L. R. and Vogelius, M.},
     TITLE = {Norm estimates for a maximal right inverse of the divergence
              operator in spaces of piecewise polynomials},
   JOURNAL = {RAIRO Mod\'{e}l. Math. Anal. Num\'{e}r.},
  FJOURNAL = {RAIRO Mod\'{e}lisation Math\'{e}matique et Analyse
              Num\'{e}rique},
    VOLUME = {19},
      YEAR = {1985},
    NUMBER = {1},
     PAGES = {111--143},
      ISSN = {0764-583X,1290-3841},
   MRCLASS = {65N30},
  MRNUMBER = {813691},
       DOI = {10.1051/m2an/1985190101111},
       URL = {https://doi.org/10.1051/m2an/1985190101111},
}

@article {AlfeldSorokina16,
    AUTHOR = {Alfeld, Peter and Sorokina, Tatyana},
     TITLE = {Linear differential operators on bivariate spline spaces and
              spline vector fields},
   JOURNAL = {BIT},
  FJOURNAL = {BIT. Numerical Mathematics},
    VOLUME = {56},
      YEAR = {2016},
    NUMBER = {1},
     PAGES = {15--32},
      ISSN = {0006-3835,1572-9125},
   MRCLASS = {41A15 (41A10 65D05 65D07)},
  MRNUMBER = {3486451},
MRREVIEWER = {Tian-Xiao\ He},
       DOI = {10.1007/s10543-015-0557-x},
       URL = {https://doi.org/10.1007/s10543-015-0557-x},
}

@article {FalkNeilan13,
    AUTHOR = {Falk, Richard S. and Neilan, Michael},
     TITLE = {Stokes complexes and the construction of stable finite
              elements with pointwise mass conservation},
   JOURNAL = {SIAM J. Numer. Anal.},
  FJOURNAL = {SIAM Journal on Numerical Analysis},
    VOLUME = {51},
      YEAR = {2013},
    NUMBER = {2},
     PAGES = {1308--1326},
      ISSN = {0036-1429,1095-7170},
   MRCLASS = {65N30 (65N12 76D07 76M10)},
  MRNUMBER = {3045658},
MRREVIEWER = {Carlos\ V\'{a}zquez Cend\'{o}n},
       DOI = {10.1137/120888132},
       URL = {https://doi.org/10.1137/120888132},
}

@article {GuzmanNeilan14,
    AUTHOR = {Guzm\'{a}n, Johnny and Neilan, Michael},
     TITLE = {Conforming and divergence-free {S}tokes elements on general
              triangular meshes},
   JOURNAL = {Math. Comp.},
  FJOURNAL = {Mathematics of Computation},
    VOLUME = {83},
      YEAR = {2014},
    NUMBER = {285},
     PAGES = {15--36},
      ISSN = {0025-5718,1088-6842},
   MRCLASS = {65N30 (65N12 76D07 76M10)},
  MRNUMBER = {3120580},
MRREVIEWER = {Marius\ Ghergu},
       DOI = {10.1090/S0025-5718-2013-02753-6},
       URL = {https://doi.org/10.1090/S0025-5718-2013-02753-6},
}

@phdthesis{Kilicer2025,
  author       = {Orsan Kilicer},
  title        = {Higher Order FEM for Surface Stokes Problems},
  school       = {Texas A\&M University},
  year         = {2025},
  month        = may,
  type         = {PhD thesis},
  advisor      = {Alan Demlow and Andrea Bonito},
  url          = {https://oaktrust.library.tamu.edu/bitstreams/68fe8214-3d59-42d4-b185-85f972db09cc/download}
}

@article {ReutherVoigt15,
    AUTHOR = {Reuther, S. and Voigt, A.},
     TITLE = {The interplay of curvature and vortices in flow on curved
              surfaces},
   JOURNAL = {Multiscale Model. Simul.},
  FJOURNAL = {Multiscale Modeling \& Simulation. A SIAM Interdisciplinary
              Journal},
    VOLUME = {13},
      YEAR = {2015},
    NUMBER = {2},
     PAGES = {632--643},
      ISSN = {1540-3459},
   MRCLASS = {76A20 (14Q10 35Q35 53Z05 76D17)},
  MRNUMBER = {3359666},
MRREVIEWER = {Akhtar A. Khan},
       DOI = {10.1137/140971798},
       URL = {https://doi.org/10.1137/140971798},
}

@book{EdwardsEtal91,
	AUTHOR = {Edwards, D.A and Brenner, H. and Wasan, T.},
	TITLE = {Interfacial Transport Processes and Rheology},
	PUBLISHER = {Elsevier},
	YEAR = {1991}
}

@unpublished{BruersEtal26,
      title={Releasing the pressure: High-order surface flow discretizations via discrete Helmholtz-Hodge decompositions}, 
      author={Tim Brüers and Christoph Lehrenfeld and Tim van Beeck and Max Wardetzky},
      year={2026},
      eprint={2603.27714},
      archivePrefix={arXiv},
      primaryClass={math.NA},
      url={https://arxiv.org/abs/2603.27714}, 
}

@article {Lenoir86,
    AUTHOR = {Lenoir, M.},
     TITLE = {Optimal isoparametric finite elements and error estimates for
              domains involving curved boundaries},
   JOURNAL = {SIAM J. Numer. Anal.},
  FJOURNAL = {SIAM Journal on Numerical Analysis},
    VOLUME = {23},
      YEAR = {1986},
    NUMBER = {3},
     PAGES = {562--580},
      ISSN = {0036-1429},
   MRCLASS = {65N15 (65N30)},
  MRNUMBER = {842644},
MRREVIEWER = {Stephen W. Brady},
       DOI = {10.1137/0723036},
       URL = {https://doi.org/10.1137/0723036},
}

@article {DemlowNeilan25,
    AUTHOR = {Demlow, Alan and Neilan, Michael},
     TITLE = {
A {T}aylor-{H}ood finite element method for the surface {S}tokes problem without penalization},
   JOURNAL = {https://arxiv.org/abs/2506.20419},
      YEAR = {2025},
}

@article {DemlowNeilan24,
    AUTHOR = {Demlow, Alan and Neilan, Michael},
     TITLE = {A tangential and penalty-free finite element method for the
              surface {S}tokes problem},
   JOURNAL = {SIAM J. Numer. Anal.},
  FJOURNAL = {SIAM Journal on Numerical Analysis},
    VOLUME = {62},
      YEAR = {2024},
    NUMBER = {1},
     PAGES = {248--272},
      ISSN = {0036-1429},
   MRCLASS = {65N15 (65N12 65N30)},
  MRNUMBER = {4695763},
MRREVIEWER = {Marius Ghergu},
       DOI = {10.1137/23M1583995},
       URL = {https://doi.org/10.1137/23M1583995},
}

@article {GuzmanNeilan18,
    AUTHOR = {Guzm\'{a}n, Johnny and Neilan, Michael},
     TITLE = {inf-sup stable finite elements on barycentric refinements
              producing divergence-free approximations in arbitrary
              dimensions},
   JOURNAL = {SIAM J. Numer. Anal.},
  FJOURNAL = {SIAM Journal on Numerical Analysis},
    VOLUME = {56},
      YEAR = {2018},
    NUMBER = {5},
     PAGES = {2826--2844},
      ISSN = {0036-1429},
   MRCLASS = {65N30 (65N12)},
  MRNUMBER = {3853609},
MRREVIEWER = {Marius Ghergu},
       DOI = {10.1137/17M1153467},
       URL = {https://doi.org/10.1137/17M1153467},
}

@article {NeilanOtus21,
    AUTHOR = {Neilan, Michael and Otus, Baris},
     TITLE = {Divergence-free {S}cott-{V}ogelius elements on curved domains},
   JOURNAL = {SIAM J. Numer. Anal.},
  FJOURNAL = {SIAM Journal on Numerical Analysis},
    VOLUME = {59},
      YEAR = {2021},
    NUMBER = {2},
     PAGES = {1090--1116},
      ISSN = {0036-1429},
   MRCLASS = {65N30 (35Q30 65N12 76D07 76M10)},
  MRNUMBER = {4249063},
MRREVIEWER = {Riccardo Sacco},
       DOI = {10.1137/20M1360098},
       URL = {https://doi.org/10.1137/20M1360098},
}

@unpublished{DurstNeilan24,
    AUTHOR = {Durst, Rebecca and Neilan, Michael},
     TITLE = {General degree divergence-free finite element methods for the {S}tokes problem on smooth domains},
   Note = {https://arxiv.org/pdf/2404.14226},
      YEAR = {2024},
      }

@article {HP23,
    AUTHOR = {Hardering, Hanne and Praetorius, Simon},
     TITLE = {Tangential errors of tensor surface finite elements},
   JOURNAL = {IMA J. Numer. Anal.},
  FJOURNAL = {IMA Journal of Numerical Analysis},
    VOLUME = {43},
      YEAR = {2023},
    NUMBER = {3},
     PAGES = {1543--1585},
      ISSN = {0272-4979},
   MRCLASS = {65M12 (65N30)},
  MRNUMBER = {4621834},
       DOI = {10.1093/imanum/drac015},
       URL = {https://doi-org.pitt.idm.oclc.org/10.1093/imanum/drac015},
}

@article{NVW12,
	Author = {Nitschke, I. and Voigt, A. and Wensch, J.},
	Doi = {10.1017/jfm.2012.317},
	Fjournal = {Journal of Fluid Mechanics},
	Issn = {0022-1120,1469-7645},
	Journal = {J. Fluid Mech.},
	Mrclass = {76D05 (76M10 76T99)},
	Mrnumber = {2975450},
	Pages = {418--438},
	Title = {A finite element approach to incompressible two-phase flow on manifolds},
	Url = {https://doi.org/10.1017/jfm.2012.317},
	Volume = {708},
	Year = {2012}}

@article{OQRY18,
	Author = {Olshanskii, Maxim A. and Quaini, Annalisa and Reusken, Arnold and Yushutin, Vladimir},
	Doi = {10.1137/18M1166183},
	Fjournal = {SIAM Journal on Scientific Computing},
	Issn = {1064-8275,1095-7197},
	Journal = {SIAM J. Sci. Comput.},
	Mrclass = {65N30 (65N12 76D07)},
	Mrnumber = {3841618},
	Mrreviewer = {J\'{a}nos\ Kar\'{a}tson},
	Number = {4},
	Pages = {A2492--A2518},
	Title = {A finite element method for the surface {S}tokes problem},
	Url = {https://doi.org/10.1137/18M1166183},
	Volume = {40},
	Year = {2018}}

@techreport{Ch09PP,
	Author = {Long Chen},
	Institution = {University of California-Irvine},
	Title = {{iFEM}: An innovative finite element method package in {M}atlab},
	Year = {2009}}

@article{Maxim19,
	Author = {Olshanskii, Maxim A. and Yushutin, Vladimir},
	Doi = {10.1007/s00021-019-0420-y},
	Fjournal = {Journal of Mathematical Fluid Mechanics},
	Issn = {1422-6928},
	Journal = {J. Math. Fluid Mech.},
	Mrclass = {65M60 (65M12 65M15 76D05)},
	Mrnumber = {3911729},
	Number = {1},
	Pages = {Paper No. 14, 18},
	Title = {A penalty finite element method for a fluid system posed on embedded surface},
	Url = {https://doi.org/10.1007/s00021-019-0420-y},
	Volume = {21},
	Year = {2019}}

@article{HansboLarsonLarsson20,
	Author = {Hansbo, Peter and Larson, Mats G. and Larsson, Karl},
	Doi = {10.1093/imanum/drz018},
	Fjournal = {IMA Journal of Numerical Analysis},
	Issn = {0272-4979},
	Journal = {IMA J. Numer. Anal.},
	Mrclass = {65N30 (53Z30 65N15)},
	Mrnumber = {4122487},
	Mrreviewer = {Javier A. Almonacid},
	Number = {3},
	Pages = {1652--1701},
	Title = {Analysis of finite element methods for vector {L}aplacians on surfaces},
	Url = {https://doi.org/10.1093/imanum/drz018},
	Volume = {40},
	Year = {2020}}

@article{Fries18,
	Author = {Fries, Thomas-Peter},
	Doi = {10.1002/fld.4510},
	Fjournal = {International Journal for Numerical Methods in Fluids},
	Issn = {0271-2091},
	Journal = {Internat. J. Numer. Methods Fluids},
	Mrclass = {65N30 (35Q30 58J32 65M60 76D05 76M10)},
	Mrnumber = {3846120},
	Number = {2},
	Pages = {55--78},
	Title = {Higher-order surface {FEM} for incompressible {N}avier-{S}tokes flows on manifolds},
	Url = {https://doi.org/10.1002/fld.4510},
	Volume = {88},
	Year = {2018}}

@article{JohnEtal17,
	Author = {John, Volker and Linke, Alexander and Merdon, Christian and Neilan, Michael and Rebholz, Leo G.},
	Doi = {10.1137/15M1047696},
	Fjournal = {SIAM Review},
	Issn = {0036-1445},
	Journal = {SIAM Rev.},
	Mrclass = {65N30 (76M10)},
	Mrnumber = {3683678},
	Mrreviewer = {Stephan Schmidt},
	Number = {3},
	Pages = {492--544},
	Title = {On the divergence constraint in mixed finite element methods for incompressible flows},
	Url = {https://doi.org/10.1137/15M1047696},
	Volume = {59},
	Year = {2017}}

@article{SurfaceStokes2,
	Author = {Lederer, Philip L. and Lehrenfeld, Christoph and Sch\"{o}berl, Joachim},
	Doi = {10.1002/nme.6317},
	Fjournal = {International Journal for Numerical Methods in Engineering},
	Issn = {0029-5981},
	Journal = {Internat. J. Numer. Methods Engrg.},
	Mrclass = {65N30 (35Q35 53E99 76Bxx 76Dxx)},
	Mrnumber = {4156753},
	Number = {11},
	Pages = {2503--2533},
	Title = {Divergence-free tangential finite element methods for incompressible flows on surfaces},
	Url = {https://doi.org/10.1002/nme.6317},
	Volume = {121},
	Year = {2020}}

@article{SurfaceStokes1,
	Author = {Bonito, Andrea and Demlow, Alan and Licht, Martin},
	Doi = {10.1137/19M1284592},
	Fjournal = {SIAM Journal on Numerical Analysis},
	Issn = {0036-1429},
	Journal = {SIAM J. Numer. Anal.},
	Mrclass = {65N30 (65N12 65N15 65N25 76D07)},
	Mrnumber = {4155235},
	Mrreviewer = {Barbara Verf\"{u}rth},
	Number = {5},
	Pages = {2764--2798},
	Title = {A divergence-conforming finite element method for the surface {S}tokes equation},
	Url = {https://doi.org/10.1137/19M1284592},
	Volume = {58},
	Year = {2020}}

@article{DemlowDziuk07,
	Author = {Demlow, Alan and Dziuk, Gerhard},
	Doi = {10.1137/050642873},
	Fjournal = {SIAM Journal on Numerical Analysis},
	Issn = {0036-1429},
	Journal = {SIAM J. Numer. Anal.},
	Mrclass = {65N30 (58J32 65N15)},
	Mrnumber = {2285862},
	Mrreviewer = {Etienne Emmrich},
	Number = {1},
	Pages = {421--442},
	Title = {An adaptive finite element method for the {L}aplace-{B}eltrami operator on implicitly defined surfaces},
	Url = {https://doi.org/10.1137/050642873},
	Volume = {45},
	Year = {2007}}

@article{Demlow09,
	Author = {Demlow, Alan},
	Doi = {10.1137/070708135},
	Fjournal = {SIAM Journal on Numerical Analysis},
	Issn = {0036-1429},
	Journal = {SIAM J. Numer. Anal.},
	Mrclass = {65N30 (65N15)},
	Mrnumber = {2485433},
	Mrreviewer = {Pascal Omnes},
	Number = {2},
	Pages = {805--827},
	Title = {Higher-order finite element methods and pointwise error estimates for elliptic problems on surfaces},
	Url = {https://doi.org/10.1137/070708135},
	Volume = {47},
	Year = {2009}}

@article{CD16,
	Author = {Cockburn, Bernardo and Demlow, Alan},
	Doi = {10.1090/mcom/3093},
	Fjournal = {Mathematics of Computation},
	Issn = {0025-5718},
	Journal = {Math. Comp.},
	Mrclass = {65N30 (58J32 65N15)},
	Mrnumber = {3522964},
	Mrreviewer = {Beny Neta},
	Number = {302},
	Pages = {2609--2638},
	Title = {Hybridizable discontinuous {G}alerkin and mixed finite element methods for elliptic problems on surfaces},
	Url = {https://doi.org/10.1090/mcom/3093},
	Volume = {85},
	Year = {2016}}

\appendix

\section{Proofs of Lemmas in Section \ref{sec-aux}}

\subsection{Proof of Lemma \ref{lem:PsiMac}}

\begin{proof}
Let $\bIct_h:\bC(\bar \Gamma_h)\to \bbR^3$ denote
the $k$th degree nodal interpolant with respect to $\bar \calT_h^{\rm ct}$,
and note that, because $\bar \calT_h^{\rm ct}$ is a refinement
of $\bar \calT_h$, there holds $\bImac_h \bIct_h = \bImac_h$.

We  write $\bPsict = \bIct_h \bp + {\bm r}$,
where ${\bm r}$ is a continuous, piecewise polynomial of degree $k$
with respect to $\bar \calT_h^{\rm ct}$. It then follows from \eqref{eqn:bPsiBounds}
that $\|{\bm r}\|_{L^\infty(\bar \Gamma_h)}\lesssim h^{k+1}$,
and so by inverse estimates, $\|{\bm r}\|_{W^{m,\infty}(\bar K)}\lesssim h^{k+1-m}$
for all $\bar K\in \bar \calT_h^{\rm ct}$ and $m\in \mathbb{N}_0$.

We then have for $m=0,1,$
\begin{align*}
|\bPsimac-\bp|_{W^{m,\infty}(\bar \Gamma_h)}
&= |\bImac_h \bPsict - \bp|_{W^{m,\infty}(\bar \Gamma_h)}\\
&\lesssim |\bImac \bPsict - \bPsict|_{W^{m,\infty}(\bar \Gamma_h)}+h^{k+1-m}\\
& \lesssim |\bImac_h (\bIct_h \bp + {\bm r}) -  (\bIct_h \bp + {\bm r})|_{W^{m,\infty}(\bar \Gamma_h)}+h^{k+1-m}\\
&\lesssim |\bImac_h{\bm r}-{\bm r}|_{W^{m,\infty}(\bar \Gamma_h)}+h^{k+1-m}.
\end{align*}
There holds $|\bImac_h {\bm r}-{\bm r}|_{W^{m,\infty}(\bar \Gamma_h)}\lesssim |{\bm r}|_{W^{m,\infty}(\bar \Gamma_h)}+ h^{1-m}|{\bm r}|_{W^{1,\infty}(\bar \Gamma_h)}\lesssim h^{k+1-m}$,
and the estimates \eqref{eqn:PsiMacApprox} now follow.

Next, we write $F_{\Tmac} = (\bImac_h \bp+\bImac_h{\bm r})\circ F_{\bar T}$, 
so that \[
|F_{\Tmac}|_{W^{m,\infty}(\hat T)}\lesssim |\bImac_h \bp+\bImac_h{\bm r}|_{W^{m,\infty}(\bar T)}|F_{\bar T}|_{W^{1,\infty}(\hat T)}^m,
\]
where we used the fact that $F_{\bar T}$ is affine.
By the stability of the Lagrange interpolant and the smoothness of $\bp$, we have
$|\bImac_h \bp|_{W^{m,\infty}(\bar T)}\lesssim 1$. We also have
by inverse estimates and and bounds of ${\bm r}$, for $m\ge 1$, $|\bImac_h {\bm r}|_{W^{m,\infty}(\bar T)}\lesssim h^{1-m} |\bImac_h {\bm r}|_{W^{1,\infty}(\bar T)}\lesssim 
h^{1-m} | {\bm r}|_{W^{1,\infty}(\bar T)}\lesssim h^{-m} \|{\bm r}\|_{L^\infty(\bar T)}\lesssim h^{k-m+1}.$
Using this estimate as well as $|F_{\bar T}|_{W^{1,\infty}(\hat T)}\lesssim h$, yields the first estimate in \eqref{eqn:FTMacBounds}:
$|F_{\Tmac}|_{W^{m,\infty}(\hat T)}\lesssim h^m_T$. The other two
estimates in the first line of \eqref{eqn:FTMacBounds} then follow from the first, 
and the second row of estimates follow from the exact same arguments.
\end{proof}

\subsection{Proof of Lemma \ref{lem:FDiff}}
\begin{proof}
We write $F_K - F_{\Kmac} = \bPsict\circ F_{\bar K} -  \bPsimac\circ F_{\bar K} = (\bPsict - \bImac_h \bPsict)\circ F_{\bar K}$ to conclude
\begin{align*}
|F_K - F_{\Kmac}|_{W^{m,\infty}(\hat T)}\lesssim |\bPsict - \bImac_h\bPsict|_{W^{m,\infty}(\bar K)} |F_{\bar K}|_{W^{1,\infty}(\hat T)}^m\lesssim h^{k+1}_K.
\end{align*}
\end{proof}

\subsection{Proof of Lemma \ref{lem:ATDiff}}
\begin{proof}
Let $C_K = \nab F_K^\intercal \nab F_K:\hat T\to \bbR^{2\times 2}$
and $C_{\Kmac} = \nab F_{\Kmac}^\intercal \nab F_{\Kmac}:\hat T\to \bbR^{2\times 2}$,
so that $J_K = \sqrt{\det(C_K)}$ and $J_{\Kmac} = \sqrt{\det(C_{\Kmac})}$.
We then use the algebraic inequality
\[
\left|a^{-\frac12} - b^{-\frac12}\right| \le \frac{|a-b|}{2(\min\{a,b\})^{\frac32}}\qquad a,b>0,
\]
the algebraic identity 
\begin{equation}\label{eqn:DeterIdentity}
\det(C_K)-\det(C_{\Kmac}) = \frac12 {\rm cof}(C_K+C_{\Kmac}):(C_K-C_{\Kmac}),
\end{equation}
and the linearity of $C\to {\rm cof}(C)$ for $2\times 2$ matrices
to get
\begin{align*}
|J_K^{-1} - J_{\Kmac}^{-1}| 
&= |(\det(C_K)^{-1/2} - \det(C_{\Kmac})^{-1/2}|\\
&\le \frac{|\det(C_K)-\det(C_{\Kmac})|}{2(\min\{\det(C_{K}),\det(C_{\Kmac})\})^{\frac32}}\\
&\lesssim \frac{(|C_T|+|C_{\Kmac}|)|C_T-C_{\Kmac}|}{(\min\{\det(C_{K}),\det(C_{\Kmac})\})^{\frac32}}.
\end{align*}
We then use $\det(C_K) \approx \det(C_{\Kmac}) \approx h^4_K$ and $|C_K|,|C_{\Kmac}| \lesssim h^2_K$
(cf.~\eqref{eqn:FTMacBounds})
to get 
\begin{align*}
|J_K^{-1} - J_{\Kmac}^{-1}| \lesssim h^{-4}_K |C_K-C_{\Kmac}|.
\end{align*}
We then write $C_K- C_{\Kmac} = \nab F_K^\intercal (\nab F_K - \nab F_{\Kmac})+(\nab F_K^\intercal - \nab F_{\Kmac}^\intercal)\nab F_{\Kmac}$
to conclude 
\begin{equation}\label{eqn:BDiff}    
\|C_K- C_{\Kmac}\|_{W^{\ell,\infty}(\hat T)}\lesssim h^{k+2}_K\qquad \ell \in \bbN_0
\end{equation}
by  \eqref{eqn:FTMacBounds} and Lemma \ref{lem:FDiff}; thus,
\begin{align}\label{eqn:JRecDiff}
\|J_K^{-1} - J_{\Kmac}^{-1}\|_{L^\infty(\hat T)} \lesssim h_K^{k-2},
\end{align}
and so, by Lemma \ref{lem:FDiff} once again,
\begin{align*}
\left\|A_K - A_{\Kmac}\right\|_{L^\infty(\hat T)}
&= \left\|\frac{\nab F_K}{J_K} - \frac{\nab F_{\Kmac}}{J_\Kmac}\right\|_{L^\infty(\hat T)}\\
&\le \left\|\frac{\nab F_K-\nab F_\Kmac}{J_K}\right\|_{L^\infty(\hat T)} +\left\| \frac{\nab F_{\Kmac}}{J_K}- \frac{\nab F_{\Kmac}}{J_\Kmac}\right\|_{L^\infty(\hat T)}\\
&\lesssim h^{k-1}_K+ \|\nab F_{\Kmac}\|_{L^\infty(\hat T)} \|J_K^{-1} - J_\Kmac^{-1}\|_{L^\infty(\hat T)}\lesssim h_K^{k-1}.
\end{align*}

Next, we compute
\begin{equation}
\label{eqn:I1I2Start}
\begin{split}
|A_K-A_{\Kmac}|_{W^{1,\infty}(\hat T)}
&\lesssim \left\|\frac{D^2 F_K}{J_K} - \frac{D^2 F_{\Kmac}}{J_\Kmac}\right\|_{L^\infty(\hat T)}
+ \left\|\frac{\nab F_K  \nab J_K}{J_K^2}-\frac{\nab F_{\Kmac} \nab J_\Kmac}{J_\Kmac^2}\right\|_{L^\infty(\hat T)}\\
&=:I_1+I_2.
\end{split}
\end{equation}
Applying Lemma \ref{lem:FDiff} and the estimates $J_K \approx J_{\Kmac} \approx h^2_K$,
we obtain $I_1\lesssim h^{k-1}_K$.

To bound $I_2$ we add and subtract terms to get
\begin{align*}
I_2
%
&\lesssim \left\|\frac{\nab F_K  \nab (J_K-J_{\Kmac})}{J_K^2}\right\|_{L^\infty(\hat T)}
+\left\|\left(\frac{\nab F_K-\nab F_{\Kmac}}{J_K^2}\right) \nab J_{\Kmac}\right\|_{L^\infty(\hat T)} +\left\|\left(\frac{\nab F_{\Kmac}}{J_K^2} -\frac{\nab F_{\Kmac}}{J_\Kmac^2}\right) \nab J_{\Kmac}\right\|_{L^\infty(\hat T)}\\
&\lesssim h^{-3}_K \| \nab (J_K - J_{\Kmac})\|_{L^\infty(\hat T)}+\left(h^{k-3}_K
+h_K \|J_K^{-2} - J_{\Kmac}^{-2}\|_{L^\infty(\hat T)}\right)
\| \nab J_{\Kmac}\|_{L^\infty(\hat T)}\\
&=:I_{2,1}+I_{2,2}.
\end{align*}
The arguments in \cite[Appendix C]{DemlowNeilan25} show $\| \nab J_{\Kmac}\|_{L^\infty(\hat T)}\lesssim h^3_K$, and we also have by \eqref{eqn:BDiff}
\begin{align*}
J_K^{-2} - J_{\Kmac}^{-2} 
&= (\det(C_K))^{-1} - (\det(C_{\Kmac}))^{-1} = 
\frac12 \frac{{\rm cof}(C_\Kmac+C_K):(C_{\Kmac}-C_T)}{\det(C_K) \det(C_{\Kmac})}
\lesssim 
h^{k-4}_K.
\end{align*}
We then conclude $I_{2,2}\lesssim h^k_K$.

Continuing, we bound $I_{2,1}$ by first applying
the chain rule:
\begin{align*}
I_{2,1} 
&= \left\| \frac{ \nab \det(C_K)}{J_K} - \frac{ \nab \det(C_{\Kmac})}{J_{\Kmac}}\right\|_{L^\infty(\hat T)}\\
&= \left\| \frac{ \nab (\det(C_K)-\det(C_{\Kmac}))}{J_K}\right\|_{L^\infty(\hat T)} 
+ \left\| \nab \det(C_{\Kmac})\big(J_K^{-1}-J_{\Kmac})\right\|_{L^\infty(\hat T)}\\
&\lesssim h^{-2}_K \| \nab (\det(C_K)-\det(C_{\Kmac}))\|_{L^\infty(\hat T)} + h_K^{k-2} \| \nab \det(C_{\Kmac})\|_{L^\infty(\hat T)},
\end{align*}
where we used \eqref{eqn:JRecDiff} in the last inequality.
Using \cite[Appendix C]{DemlowNeilan25} again, we have $ \| \nab \det(C_{\Kmac})\|_{L^\infty(\hat T)}\lesssim h^5_K$. 
Furthermore, by \eqref{eqn:DeterIdentity} and \eqref{eqn:BDiff},
\begin{align*}
\| \nab (\det(C_K)-\det(C_{\Kmac}))\|_{L^\infty(\hat T)}
&\lesssim \|C_K+C_{\Kmac}\|_{L^\infty(\hat T)} |C_K-C_{\Kmac}|_{W^{1,\infty}(\hat T)}\\
&\qquad+|C_K+C_{\Kmac}|_{W^{1,\infty}(\hat T)} \|C_K-C_{\Kmac}\|_{L^{\infty}(\hat T)}\\
&\lesssim h_K^{k+4}  + h_K^{k+2} |C_K+C_{\Kmac}|_{W^{1,\infty}(\hat T)}\lesssim h^{k+4}_K,
\end{align*}
and we conclude $I_{2,1}\lesssim h^{k+3}_K$, and so $I_2\lesssim h^k_K$.
Combining this estimate with the bound $I_1\lesssim h_K^{k-1}$ to 
\eqref{eqn:I1I2Start} completes the proof.
\end{proof}


\section{Proof of Lemma \ref{lem:PITA}}\label{App:Pert}
The proof of Lemma \ref{lem:PITA} requires a few preliminary 
results. First, for an element $K\in \calT^{\rm ct}_h$, we set
 $\Kmac =\big(\bPsimac\circ \bPsict^{-1}\big)(K)$.
 Let $a\in \calN_h^{\rm ct}$ be a Lagrange DOF on $K$
 and $\amac=\big(\bPsimac\circ \bPsict^{-1}\big)(a)$ 
 be the corresponding DOF on $\Kmac$. We then set
 \begin{equation}\label{eqn:BaDef}
 B_a:=\calM_a^K\big(\calM_{\amac}^{K^{\rm{m}}}\big)^{-1}-{\bf I}_3\in \mathbb{R}^{3\times 3},
 \quad \text{where}\quad \big(\calM_{\amac}^{K^{\rm{m}}}\big)^{-1} = \frac{1}{\bnu_{\Kmac}\cdot \bnu_{K^{\rm m}_a}} \bPi_{K^{\rm m}_a}(a^{\rm m}).
 \end{equation}
 The next result shows that $B_a$ is a perturbation of the normal projection onto $\Gamma_h$ at $a$.

\begin{lemma}\label{lem:B-Bound}
Let $B_a$ be defined by \eqref{eqn:BaDef}. There holds
    \begin{align*}
        \big|B_a+\bnu_K(a)\otimes \bnu_K(a)\big|\lesssim h^k.
    \end{align*}
\end{lemma}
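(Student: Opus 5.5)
Write $\bn_1 = \bnu_{K_a}(a)$, $\bn_2 = \bnu_K(a)$, $\bm_1 = \bnu_{K^{\rm m}_a}(a^{\rm m})$, $\bm_2 = \bnu_{\Kmac}(a^{\rm m})$. The idea is that all four normals are $\calO(h^k)$ perturbations of the single vector $\bnu^\ast := \bnu(\bp(a))$. Once this is known, it suffices to replace each normal in $B_a$ by $\bnu^\ast$, evaluate the resulting limit matrix exactly, and control the error by the Lipschitz dependence of $B_a$ on the normals.

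\textbf{Step 1 (normals).} By \eqref{eqn:NormalGood}, $|\bn_i - \bnu(a)| \lesssim h^k$ for $i=1,2$. By \eqref{eqn:macNuGood}, $|\bm_i - \bnu(a^{\rm m})| \lesssim h^k$ for $i=1,2$. Since $a = \bPsict(\bar a)$ and $a^{\rm m} = \bPsimac(\bar a)$, Lemma~\ref{lem:PsiMac}, specifically \eqref{eqn:PsiMacInterpProp}, gives $|a - a^{\rm m}| \lesssim h^{k+1}$. Smoothness of $\bnu$ in $U_\delta$ then yields $|\bnu(a) - \bnu(a^{\rm m})| \lesssim h^{k+1}$, and $|\bnu(a) - \bnu^\ast| \lesssim |d(a)| \lesssim h^{k+1}$. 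Hence all four unit vectors lie within $\calO(h^k)$ of $\bnu^\ast$. In particular $\bm_1 \cdot \bm_2 \ge 1 - Ch^{2k} \ge 1/2$ for $h$ small, so the inverse in \eqref{eqn:BaDef} is well defined and uniformly bounded.

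\textbf{Step 2 (limit matrix).} Define $G(\bn_1,\bn_2,\bm_1,\bm_2) := \big((\bn_1\cdot\bn_2){\bf I}_3 - \bn_1\otimes\bn_2\big)\,(\bm_1\cdot\bm_2)^{-1}({\bf I}_3 - \bm_1\otimes\bm_1) - {\bf I}_3$, so that $B_a = G(\bn_1,\bn_2,\bm_1,\bm_2)$. The orientation of the factors is fixed by \eqref{eqn:calMaK} and \eqref{eqn:BaDef}; the inverse uses the projection $\bPi_{K^{\rm m}_a}(a^{\rm m})$, which is built from $\bm_1$. At $\bn_1=\bn_2=\bm_1=\bm_2=\bnu^\ast$ we obtain
\[
G = ({\bf I}_3 - \bnu^\ast\otimes\bnu^\ast)^2 - {\bf I}_3 = -\bnu^\ast\otimes\bnu^\ast .
\]

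\textbf{Step 3 (perturbation).} On the set of unit vectors with pairwise inner products at least $1/2$, $G$ is a smooth function with uniformly bounded derivatives, since it is polynomial apart from the factor $(\bm_1\cdot\bm_2)^{-1}$, which is bounded there. Therefore
\[
|B_a + \bnu^\ast\otimes\bnu^\ast| \lesssim \sum_i |\bn_i - \bnu^\ast| + \sum_i |\bm_i - \bnu^\ast| \lesssim h^k .
\]
Finally, $|\bnu^\ast\otimes\bnu^\ast - \bnu_K(a)\otimes\bnu_K(a)| \lesssim |\bnu^\ast - \bn_2| \lesssim h^k$, and the triangle inequality gives the claim.

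The only delicate point is Step 1: relating the normals of the two distinct discrete surfaces $\Gamma_h$ and $\Gammamac_h$, evaluated at the distinct points $a$ and $a^{\rm m}$. Lemma~\ref{lem:PsiMac} handles this by controlling both the normal error \eqref{eqn:macNuGood} and the displacement $|a - a^{\rm m}|$ through \eqref{eqn:PsiMacInterpProp}. Everything after Step 1 is elementary algebra plus a Lipschitz estimate.
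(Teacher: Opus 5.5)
Your proof is correct and is essentially the paper's argument. Both reduce the claim to the mutual $\calO(h^k)$-closeness of the four normals given by \eqref{eqn:NormalGood} and \eqref{eqn:macNuGood}. The paper carries out the perturbation by explicit algebra: it approximates $\calM_a^K$ by ${\bf I}_3-\bnu_{K_a}(a)\otimes\bnu_K(a)$ and the inverse by $\bPi_{K^{\rm m}_a}(a^{\rm m})$ up to $\calO(h^{2k})$, then uses $({\bf I}_3-\bnu_{K_a}(a)\otimes\bnu_K(a))\bPi_K(a)=\bPi_K(a)$. You instead package it as a Lipschitz bound for $G$ about the common vector $\bnu(\bp(a))$.

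Your Step 1 is more careful than the paper about the normals being evaluated at the distinct points $a$ and $a^{\rm m}$. One small fix: run the mean-value estimate in Step 3 on a convex neighborhood such as $\{|\bx_i-\bnu^\ast|\le 1/4\}$, since the set of unit vectors is not convex.
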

\begin{proof}
    From \eqref{eqn:NormalGood} and \eqref{eqn:macNuGood}, for $\bnu_1,\bnu_2\in \{\bnu_T,\bnu_{K_a},\bnu_{\Kmac},\bnu_{K^{\rm m}_a}\},$ we have the following estimates for $h$ sufficiently small:
    \begin{align}\label{eqn:nu_bounds}
        |\bnu_1-\bnu_2|\lesssim h^k,\qquad |1-\bnu_1\cdot \bnu_2|\lesssim h^{2k},\qquad \frac{1}{\bnu_1\cdot \bnu_2}\lesssim 1.
    \end{align}
    Consequently, by \eqref{eqn:calMaK} and \eqref{eqn:BaDef} there holds
    \begin{align*}
        \big|\calM_a^K-\big({\bf I}_3-\bnu_{K_a}(a)\otimes \bnu_K(a)\big)\big|\lesssim h^{2k}, \text{ and }\big|\big(\calM_{\amac}^{\Kmac}\big)^{-1}-\bPi_{K^{\rm m}_a}(\amac)\big|\lesssim h^{2k}.
    \end{align*} 
    Using the algebraic identity $ab-cd=\frac{1}{2}(a-c)(b+d)+\frac{1}{2}(a+c)(b-d)$ and the fact that the matrices in these expressions are clearly uniformly bounded, we have
    \begin{align*}
        \big|\calM_a^K\big(\calM_{\amac}^{\Kmac}\big)^{-1}-\big({\bf I}_3-\bnu_{K_a}(a)\otimes \bnu_K(a)\big)\bPi_{K^{\rm m}_a}(\amac)\big|\lesssim h^{2k}.
    \end{align*}
    Next, we write
    \begin{align*}
    \big({\bf I}_3-\bnu_{K_a}(a)\otimes \bnu_K(a)\big)\bPi_{K^{\rm m}_a}(\amac)=\big({\bf I}_3-\bnu_{K_a}(a)\otimes \bnu_K(a)\big)\big(\bPi_{K^{\rm m}_a}(\amac)-\bPi_K(a) \big)+\bPi_K(a),
    \end{align*}
    and use \eqref{eqn:nu_bounds} to conclude
    \begin{align*}
        \big|B_a+\bnu_K(a)\otimes \bnu_K(a)\big|=\big|\calM_a^K\big(\calM_{\amac}^{\Kmac}\big)^{-1}-\bPi_K(a)\big|\lesssim h^k.
    \end{align*}
    
\end{proof}

\begin{lemma}\label{lem:AB_bound}
    Fix $K\in \calT_h^{\rm ct}$, and let $\hat B_K\in \big[\bbP_r(\hat{T})\big]^{3\times 3}$
be defined such that $\hat B_K(\hat a) = B_a$ for 
all $r$th-degree Lagrange DOFs $\hat{a}\in \hat{\calN}$,
where $a = F_K(\hat a)$ and $B_a$ is defined by \eqref{eqn:BaDef}.
Then there holds for $k\ge 2$,
    \begin{align}\label{eqn:ATDaggerB}
        |A_K^{\dagger}\hat B_K|_{W^{\ell,\infty}(\hat{T})}\lesssim h_K^{2+\ell}\qquad \ell=0,1,
    \end{align}
where $A_K^{\dagger}$ denotes the Penrose inverse of $A_K = \nab F_K/J_K$.
\end{lemma}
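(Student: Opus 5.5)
The key observation is that the Moore--Penrose inverse of $A_K$ kills the normal direction. Since $A_K = \nab F_K/J_K$ has range equal to the tangent plane of $K$, we have $A_K^\dagger = J_K(\nab F_K^\intercal \nab F_K)^{-1}\nab F_K^\intercal$ and $A_K^\dagger \bnu_K = 0$. By Lemma~\ref{lem:B-Bound}, $B_a = -\bnu_K(a)\otimes\bnu_K(a) + E_a$ with $|E_a|\lesssim h^k$.

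I would split $\hat B_K = \hat N_K + \hat E_K$, where $\hat N_K, \hat E_K\in[\bbP_r(\hat T)]^{3\times 3}$ are the Lagrange interpolants of $-\bnu_K\otimes\bnu_K\circ F_K$ and of the nodal values $E_a$, respectively.

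\textbf{The $\hat E_K$ term.} By equivalence of norms on $\bbP_r(\hat T)$, $\|\hat E_K\|_{W^{1,\infty}(\hat T)}\lesssim \max_a|E_a|\lesssim h^k$. From \eqref{eqn:FKBounds} we get $|A_K^\dagger|\lesssim J_K\,|\nab F_K|^{-1}\approx h_K$. Differentiating the formula for $A_K^\dagger$ and using $|F_K|_{W^{2,\infty}}\lesssim h_K^2$ together with $|\nab J_K|\lesssim h_K^3$ (cf.~\cite[Appendix C]{DemlowNeilan25}) gives $|A_K^\dagger|_{W^{1,\infty}(\hat T)}\lesssim h_K^2$. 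The product rule then yields
\[
|A_K^\dagger\hat E_K|_{W^{\ell,\infty}(\hat T)}\lesssim h_K^{1+k}\le h_K^{2+\ell}\quad(\ell=0,1),
\]
which holds because $k\ge2$. This is where the hypothesis $k\ge 2$ enters.

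\textbf{The $\hat N_K$ term.} Write $\hat N_K = -\bnu_K\otimes\bnu_K\circ F_K + R_K$, where $R_K$ is the interpolation error. The exact part vanishes identically: $A_K^\dagger(\bnu_K\otimes\bnu_K)\circ F_K = (A_K^\dagger\bnu_K\circ F_K)\otimes\bnu_K = 0$ pointwise on $\hat T$, so all its derivatives vanish too.

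For the interpolation error, the function $\hat x\mapsto\bnu_K(F_K(\hat x))$ has derivatives of order $m$ bounded by $h_K^m$. This follows from the boundedness of piecewise derivatives of $\bnu_h$ noted after \eqref{eqn:NormalGood}, combined with the chain rule and \eqref{eqn:FKBounds}. Hence $\|R_K\|_{W^{\ell,\infty}(\hat T)}\lesssim |\bnu_K\otimes\bnu_K\circ F_K|_{W^{r+1,\infty}(\hat T)}\lesssim h_K^{r+1}\le h_K^{2}$, since $r\ge 2$ (in fact $r+1\ge 3$). Multiplying by $A_K^\dagger$ gives
\[
|A_K^\dagger R_K|_{W^{\ell,\infty}(\hat T)}\lesssim h_K^{1}\cdot h_K^{r+1}+h_K^{2}\cdot h_K^{r+1}\lesssim h_K^{r+2}\le h_K^{3}.
\]
This is at least $h_K^{2+\ell}$ for $\ell=0,1$.

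\textbf{Expected obstacle.} The main delicate point is the bound on the derivatives of $A_K^\dagger$, in particular confirming that $|A_K^\dagger|_{W^{1,\infty}}\lesssim h_K^2$ and not merely $h_K$. I would expand
\[
A_K^\dagger = J_K\,C_K^{-1}\nab F_K^\intercal,\qquad C_K = \nab F_K^\intercal\nab F_K.
\]
Each derivative falls on one of three factors. It costs at most $h_K^3$ on $J_K$, at most $h_K^{-4}\cdot h_K^3$ on $C_K^{-1}$, and at most $h_K^2$ on $\nab F_K$. In every case the result is $O(h_K^2)$. Summing the contributions of $\hat E_K$ and $\hat N_K$ then gives \eqref{eqn:ATDaggerB}.
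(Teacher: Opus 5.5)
Your argument is correct, and it reorganizes the paper's ingredients more cleanly rather than reproducing its proof.

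The paper never splits $\hat B_K$ on all of $\hat T$. It interpolates the product $A_K^\dagger\hat B_K$ (for $\ell=1$, its gradient) and controls the Bramble--Hilbert remainder of that product. It then bounds only the nodal values, via $A_K^\dagger(\hat a)\bnu_K(a)=0$ and Lemma~\ref{lem:B-Bound}. In the $\ell=1$ case it introduces $\hat C_K=(\bnu_K\circ F_K)\otimes(\bnu_K\circ F_K)$ and the splitting $\hat B_K+\hat C_K=(\hat B_K+\hat I_h\hat C_K)+(\hat C_K-\hat I_h\hat C_K)$, but uses it only at the nodes.

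Your decomposition $\hat B_K=\hat E_K-\hat I_h\hat C_K$ is exactly this splitting applied globally. The dominant part is therefore annihilated identically by $A_K^\dagger$, and Bramble--Hilbert is needed only for the smooth function $\hat C_K$. This buys two things:
\begin{itemize}
\item You need only $|A_K^\dagger|_{W^{m,\infty}(\hat T)}\lesssim h_K^{m+1}$ for $m=0,1$, which you verify by hand. The paper instead imports bounds up to $m=r+2$ from \cite[Lemma 2.6]{DemlowNeilan25}.
\item You obtain $h_K^{k+1}+h_K^{r+2}$ for both $\ell=0$ and $\ell=1$. The paper's $\ell=0$ estimate stops at $h_K^2$, because its remainder term $|A_K^\dagger|_{W^{1,\infty}(\hat T)}|\hat B_K|_{W^{r,\infty}(\hat T)}$ is bounded using only $|\hat B_K|_{W^{r,\infty}(\hat T)}\lesssim 1$.
\end{itemize}
The hypothesis $k\ge 2$ enters the same way in both proofs, namely to get $h_K^{k+1}\lesssim h_K^3$ when $\ell=1$.
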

\begin{proof}
     Note that by \cite[Lemma 2.6]{DemlowNeilan25}, we have
    \begin{align}\label{eqn:A_est}
        |A_K|_{W^{m,\infty}(\hat{T})}\lesssim h^{m-1}_K,\qquad |A_K^{\dagger}|_{W^{m,p}(\hat{T})}\lesssim h^{m+1}_K,
    \end{align}
    and similarly for $A_{\Kmac}$.

{\bf Case} ${\bm \ell=0}$:
    Let $\hat I_h(A_K^{\dagger}\hat B_K)$ be the $r$-degree interpolant of 
    $A_K^{\dagger}\hat B_K$ with respect to the 
    Lagrange nodes $\hat \calN$.    
    Then by the Bramble-Hilbert lemma, \eqref{eqn:A_est}, and using the fact that $\hat B_K$ is a 
    polynomial of degree $\leq r$, we have
    \begin{equation}\label{eqn:TriangleAT}
    \begin{split}
    \|A_K^{\dagger}\hat B_K\|_{L^\infty(\hat{T})}
    &\leq \|\hat I_h(A_K^{\dagger}\hat B_K)\|_{L^\infty(\hat{T})}+\|A_K^{\dagger}\hat B_K-\hat I_h(A_K^{\dagger}\hat B_K)\|_{L^\infty(\hat{T})}\\ 
    &\lesssim \|\hat I_h(A_K^{\dagger}\hat B_K)\|_{L^\infty(\hat{T})}+|A_K^{\dagger}\hat B_K|_{W^{r+1,\infty}(\hat{T})}\\
    &\lesssim \|\hat I_h(A_K^{\dagger}\hat B_K)\|_{L^\infty(\hat{T})}+\sum_{\ell=0}^{r} |A_K^{\dagger}|_{W^{r+1-\ell,\infty}(\hat{T})}|\hat B_K|_{W^{\ell,\infty}(\hat{T})}\\
    &\lesssim \|\hat I_h(A_K^{\dagger}B)\|_{L^\infty(\hat{T})}+h^2_K.
    \end{split}
    \end{equation}
    It remains to bound $\|\hat I_h(A_K^{\dagger}\hat B_K)\|_{L^\infty(\hat{T})}$. To this end, we use scaling, \eqref{eqn:A_est}, Lemma \ref{lem:B-Bound}, and the identity $A_K^{\dagger}(\hat{a})\bnu_K(a)=0$ to conclude
    \begin{align*}
        \|\hat I_h(A_K^{\dagger}\hat B_K)\|_{L^\infty(\hat{T})}&\lesssim \max_{\hat{a}\in\hat{\calN}}\big|\hat I_h(A_K^{\dagger}\hat B_K)(\hat
        a
)\big|\\ &=\max_{\hat{a}\in\hat{\calN}}\big| (A_K^{\dagger}\hat B_K)(\hat{a})\big|\\ &=\max_{\hat{a}\in\hat{\calN}}\big|A_K^{\dagger}(\hat{a})B_a\big|\\ &=\max_{\hat{a}\in\hat{\calN}}\big|A_K^{\dagger}\big(\hat{a})(B_a+\bnu_K(a)\otimes \bnu_K(a)\big)\big|\lesssim h^k\|A_K^{\dagger}\|_{L^\infty(\hat{T})}\lesssim h^{k+1}_K.    \end{align*}
Combining this estimate with \eqref{eqn:TriangleAT} yields the desired result $\|A_K^\dagger \hat B_K\|_{L^\infty(\hat T)}\lesssim h^2$.

{\bf Case} {$\bm \ell=1$}: 
We apply a similar argument as the $\ell=0$ case and write
    \begin{equation}\label{eqn:W1infStart}
    \begin{split}
 |A_T^{\dagger}\hat B_K|_{W^{1,\infty}(\hat T)}
& \le  \|\hat I_h  \nab (A_K^{\dagger}\hat B_K)\|_{L^\infty(\hat T)}
+\|{\nabla}(A_K^{\dagger}\hat B_K)-\hat I_h({\nabla}(A_K^{\dagger}\hat B_K))\|_{L^\infty(\hat{T})}\\
&\lesssim \|\hat I_h  \nab (A_K^{\dagger}\hat B_K)\|_{L^\infty(\hat T)}+\sum_{\ell=0}^r |A_K^{\dagger}|_{W^{r+2-\ell,\infty}(\hat{T})}|\hat B_K|_{W^{\ell,\infty}(\hat{T})}\\
&\lesssim 
\|\hat I_h  \nab (A_K^{\dagger}\hat B_K)\|_{L^\infty(\hat T)}+h^3_K,
    \end{split} 
    \end{equation}
  where we used \eqref{eqn:A_est} in the last step.   
    Thus, it suffices to estimate $\|\hat I_h({\nabla}(A_K^{\dagger}\hat B_K))\|_{L^\infty(\hat{T})}$.

    Set $\hat C_K=(\bnu_K\circ F_K)\otimes (\bnu_K\circ F_K)$ and note that $A_K^{\dagger}\hat C_K=0$ on $\hat{T}$. 
    Consequently $ \nabla (A_K^{\dagger}\hat C_K)=0$, and so 
    \begin{align*}
        |{\nabla}(A_K^{\dagger}\hat B_K)(\hat{a})|
        &=\big|{\nabla}\big(A_K^{\dagger}(\hat B_K+\hat C_K)\big)(\hat{a})\big|\\ 
        &\lesssim |A_K^{\dagger}|_{W^{1,\infty}(\hat{T})}|(\hat B_K+\hat C_K)(\hat{a})|+\|A_K^{\dagger}\|_{L^\infty(\hat{T})}\big|{\nabla}\big(\hat B_K+\hat C_K\big)(\hat{a})\big|\qquad \forall \hat a\in \hat \calN.
    \end{align*}
    Using \eqref{eqn:A_est} and Lemma \ref{lem:B-Bound} yields
    \begin{align}\label{eqn:AB_bound2}
        |{\nabla}(A_K^{\dagger}\hat B_K)(\hat{a})|\lesssim h^{k+2}+h\big|{\nabla}\big(\hat B_K+\hat C_K\big)(\hat{a}
        )\big|.
    \end{align}
    We then add and subtract the $r$th degree interpolant of $\hat C_K$:
    \begin{align}\label{eqn:nabBPC}
        \big|{\nabla}\big(\hat B_K+\hat C_K\big)(\hat{a})\big|
        \leq \big| {\nabla}\big(\hat B_K+\hat I_h \hat C_{\hat K}\big)(\hat{a})\big|+\big|{\nabla}\big(\hat C_K-\hat I_h \hat C_K\big)(\hat{a})\big|. 
    \end{align}
    To bound the first term in \eqref{eqn:nabBPC}, we use an equivalence of norms argument and Lemma \ref{lem:B-Bound}:
    \begin{align}\label{eqn:J1Bound}
        \big|{\nabla}\big(\hat B_K+\hat I_h \hat C_K\big)(\hat{a})\big|
        \lesssim \|\hat B_K+\hat I_h \hat  C_K\|_{L^\infty(\hat{T})}\lesssim \max_{\hat{a}\in\hat{\calN}}\big|\big(\hat B_K+\hat I_h \hat C_K\big)(\hat{a})\big|\lesssim h^k_K.
    \end{align}
    To bound the second term in \eqref{eqn:nabBPC}, we apply the Bramble-Hilbert lemma, scaling,
    and the boundedness of Sobolev norms of the discrete outward unit normal:
    \begin{equation}\label{eqn:J2Bound}
    \begin{split}
        \big| {\nabla}\big(\hat C_K-\hat I_h \hat C_K\big)(\hat{a})\big|
        &\leq |\hat C_K-\hat I_h \hat C_K|_{W^{1,\infty}(\hat{T})}\\
        &\lesssim |\hat C_K|_{W^{r+1,\infty}(\hat{T})}\lesssim |\bnu_K\circ F_K|^2_{W^{r+1,\infty}(\hat{T})}\lesssim h^{r+1}\|\bnu_K\|_{W^{r+1}(K)}\lesssim h^{r+1}_K.
    \end{split}
    \end{equation}
    We 
    apply the estimates \eqref{eqn:J1Bound}--\eqref{eqn:J2Bound} towards \eqref{eqn:nabBPC} 
    to conclude
    \begin{align*}
        \big|{\nabla}\big(\hat B_K+\hat C_K\big)(\hat{a})\big|\lesssim h^k_K+h^{r+1}_K.
    \end{align*}
    Applying this estimate to \eqref{eqn:AB_bound2} gets
    \begin{align*}
        \big| {\nabla}\big(A_K^{\dagger}\hat B_K\big)(\hat{a})\big|\lesssim h^{k+1}_K+h^{r+2}_K\qquad \forall\hat{a}\in\hat{\calN},
    \end{align*} 
    and therefore
    \begin{align}\label{eqn:LastDO}
        \|\hat I_h({\nabla}(A_K^{\dagger}\hat B_K))\|_{L^\infty(\hat{T})}\lesssim \max_{\hat{a}\in\hat{\calN}}\big| \hat{\nabla}\big(A_K^{\dagger}\hat B_K \big)(\hat{a})\big|\lesssim h^{k+1}_K+h^{r+2}_K\lesssim h^3_K,
    \end{align}
for $k\ge 2$. Applying \eqref{eqn:LastDO} to \eqref{eqn:W1infStart} yields
the desired result \eqref{eqn:ATDaggerB} in the case $\ell=1$, which concludes
the proof.
\end{proof}

\subsection{Proof of Lemma \ref{lem:PITA}}

\begin{proof}
    Let $\bvmac\in\bVmac_h$ be arbitrary and define $\bv\in\bV_h$ such that $\bv_{K_a}(a)=\bvmac_{K^{\rm m}_{a^{\rm m}}}(\amac)$ at Lagrange nodes $a\in \calN_h^{\rm ct}$, where $\amac = (\bPsimac\circ \bPsict^{-1})(a)$.

    Define $\bar{\bv},\bar{\bv}^{\rm m}:\bar\Gamma_h\to \bbR^3$ via composition:
\[
\bar{\bv}=\bv \circ \bPsict,\quad \text{and}\quad
\overbvmac=\bvmac \circ \bPsimac.
\]
Let $\bar{K}\in\bar{\calT}^{\rm ct}_h$ be arbitrary and set $K=\bPsict(\bar{K}), \Kmac=\bPsimac(\bar{K})$.
Note  that as 
\begin{align*}
        \bv_{K_a}(a)=\bvmac_{K^{\rm m}_{a^{\rm m}}}(\amac), \qquad \bv_{K}(a)=\calM_a^K(v_{K_a}(a)),\qquad \text{and  }\quad\bvmac_{\Kmac}(\amac)=\calM_{\amac}^{\Kmac}(\bvmac_{K^{\rm m}_{a^{\rm m}}}(\amac)),
    \end{align*} 
    there holds 
    \begin{align}\label{eqn:AssignmentOnBar}
        \bar{\bv}_{\bar{K}}(\bar{a})=\calM_a^K\big(\calM_{\amac}^{\Kmac} \big)^{-1}\overbvmac_{\bar{K}}(\bar{a})
        = \overbvmac_{\bar K}(\bar a)+\hat B_K(\hat a) \overbvmac_{\bar K}(\bar a)
\end{align} 
with $\bar a = \bPsict^{-1}(a) = \bPsimac^{-1}(a^{\rm m})$
and $B_a$ is defined by \eqref{eqn:BaDef}.

By definition of $\bV_h$ and $\bVmac_h$, we can write $\bv|_K\circ F_K=A_K\hat{\bv}$ and $\bvmac\circ F_{K^{\rm m}}=A_{\Kmac}\bvhatmac$ for some $\hat{\bv},\bvhatmac\in [\bbP_r(\hat{T})]^2$. Thus, we have
  \begin{align*}
        \bar{\bv}|_{\bar K}\circ F_{\bar K}=A_K\hat{\bv},\qquad \overbvmac|_{\bar K}\circ F_{\bar K}=A_{\Kmac}\bvhatmac,
    \end{align*}
and so by \eqref{eqn:AssignmentOnBar},

        \begin{align}\label{eqn:AssignmentOnHat}
        \hat{\bv}(\hat{a})=A_K^{\dagger}(\hat{a})({\bf I}_3+\hat B_K(\hat{a}))A_{\Kmac}(\hat{a})\bvhatmac(\hat{a}).
    \end{align}

    \textbf{Part 1:} We first establish the following estimate:
    \begin{align}\label{eqn:PITAStep1}
        \big| \hat{\bv}-A_K^{\dagger}({\bf I}_3+\hat B_K)A_{\Kmac}\bvhatmac\big|_{H^\ell(\hat{T})}^2 \les h_K^4\|\bvmac \|_{H^1(\Kmac)}^2,\qquad \ell=0,1.
    \end{align}
    Note that ${\bf I}_3+\hat B_K\in [\bbP_r(\hat{T})]^{3\times 3}$ and $\|{\bf I}_3+\hat B_K\|_{L^\infty(\hat{T})}\les 1$. Also, see that for $\hat{a}\in\hat{\calN}$, by definition of 
    $\bV_h$, $\bVmac_h$ and construction of $\bv$ there holds 
    \begin{align*}
        \hat{\bv}(\hat{a})=A_K^{\dagger}(\hat{a})(\bI_3+B(\hat{a}))A_{\Kmac}(\hat{a})\bvhatmac(\hat{a}).
    \end{align*}
    Applying the Bramble-Hilbert lemma, the product rule, a change of variables, an inverse estimate, and \eqref{eqn:A_est} we have 
\begin{equation}
\label{eqn:FirstBHH}
\begin{split}
        \big| \hat{\bv}-A_K^{\dagger}({\bf I}_3+\hat B_K) A_{\Kmac}\bvhatmac\big|_{H^\ell(\hat{T})}^2
        &\lesssim \big|A_K^{\dagger}({\bf I}_3+\hat B_K)A_{\Kmac}\bvhatmac\big|_{H^{r+1}(\hat{T})}^2\\
&\lesssim \sum_{n=0}^{r+1}\big|A_K^{\dagger}({\bf I}_3+\hat B_K)\big|_{W^{n,\infty}(\hat{T})}^2\big| A_{\Kmac}\bvhatmac\big|_{H^{r+1-n}(\hat{T})}^2\\
&\lesssim h^2_K\sum_{n=0}^{r+1} h^{2n}_K \big|A_{\Kmac}\bvhatmac\big|_{H^{r+1-n}(\hat{T})}^2\\ 
&\les h^2_K\big|A_{\Kmac}\bvhatmac
\big|_{H^{r+1}(\hat{T})}^2+\sum_{n=1}^{r+1}h_K^{2r+2}\|\bvmac \|_{H^{r+1-n}(\Kmac)}^2\\ 
&\les h^2_K\big| A_{\Kmac}\bvhatmac\big|_{H^{r+1}(\hat{T})}^2+h^4_K\|\bvmac\|_{H^1(\Kmac)}^2. 
\end{split}
\end{equation}
It remains to bound $\big|A_{\Kmac}\bvhatmac\big|_{H^{r+1}(\hat{T})}^2$. Since $\bvhatmac$ is a piecewise polynomial of degree $r$, we can use the product rule, a change of variables, \eqref{eqn:A_est}, and an inverse estimate to find
\begin{equation}
\label{eqn:SecondBHH}
\begin{split}
    \big|A_{\Tmac}\bvhatmac\big|_{H^{r+1}(\hat{T})}
    &\leq \sum_{n=0}^{r+1}|A_{\Kmac}|_{W^{r+1-n,\infty}(\hat{T})}|\bvhatmac|_{H^{n}(\hat{T})}
    =\sum_{n=0}^r|A_{\Kmac}|_{W^{r+1-n,\infty}(\hat{T})}|\bvhatmac|_{H^{n}(\hat{T})}\\ 
    &\les \sum_{n=0}^r h^{r-n}_K|\bvhatmac|_{H^{n}(\hat{T})}
    =\sum_{n=0}^{r}h^{r-n}_K\big|A_{\Kmac}^{\dagger}A_{\Kmac}\bvhatmac\big|_{H^{n}(\hat{T})}\\
    &\les \sum_{n=0}^r h^{r-n}_K\sum_{j=0}^{n}|A_{\Kmac}^{\dagger}|_{W^{n-j,\infty}(\hat{T})}\big| A_{\Kmac}\bvhatmac\big|_{H^j(\hat{T})}\\
    &\les h^r_K\sum_{j=0}^r \|\bvmac\|_{H^j(\Kmac)}\les h^r\|\bvmac \|_{H^r(\Kmac)}\les h_K\|\bvmac\|_{H^1(\Kmac)}.
\end{split}
\end{equation}
Combining \eqref{eqn:FirstBHH}--\eqref{eqn:SecondBHH} yields \eqref{eqn:PITAStep1}.

\textbf{Part 2: }We will now derive the estimate for $\|\bar{\bv}-\overbvmac\|_{L^2(\bar K)}$.
    By a change of variables, we have
    \begin{align}\label{eqn:L2barStart}
        \|\bar {\bv}-\overbvmac\|_{L^2(\bar{K})}^2\approx h^2_K\int_{\hat{T}}\big| A_K\hat{\bv}-A_{\Kmac}\bvhatmac\big|^2.
    \end{align} 
    Then, using the triangle inequality, \eqref{eqn:A_est}, and Lemma \ref{lem:ATDiff} yields
    \begin{equation}\label{eqn:L2barStep2}
    \begin{split}
        h^2_K\int_{\hat{T}}\big| A_K\hat{\bv}-A_{\Kmac}\bvhatmac\big|^2
        &\lesssim h^2_K\int_{\hat{T}}\big|A_K(\hat{\bv}-\bvhatmac)\big|^2+h^2_K\int_{\hat{T}}\big|(A_{\Kmac}-A_K)\bvhatmac\big|^2\\ 
        &\les \int_{\hat{T}}|\hat{\bv}-\bvhatmac|^2+h^2_K\|A_{\Kmac}-A_K\|_{L^\infty(\hat{T})}^2\int_{\hat{T}}|\bvhatmac|^2\\ 
        &\les \int_{\hat{T}}|\hat{\bv}-\bvhatmac|^2+h^{2k}_K\int_{\hat{T}}\big| A_{\Kmac}^{\dagger}A_{\Kmac}\bvhatmac\big|^2\\
        &\les \int_{\hat{T}}|\hat{\bv}-\bvhatmac|^2+h^{2k+2}_K\int_{\hat{T}}\big|A_{\Kmac}\bvhatmac\big|^2\les \int_{\hat{T}}|\hat{\bv}-\bvhatmac|^2+h^{2k}_K\|\bvmac\|_{L^2(\Kmac)}.
    \end{split}
    \end{equation}
    Define $\hat B_K$ as in Lemma \ref{lem:AB_bound}. Then, using the triangle inequality and then Part 1, Lemma \ref{lem:AB_bound}, and Lemma \ref{lem:ATDiff},
 \begin{equation}\label{eqn:L2barStartStep3}
    \begin{split}
        \int_{\hat{T}}|\hat{\bv}-\bvhatmac|^2
        &\les \int_{\hat{T}}\big| \hat{\bv}-A_K^{\dagger}({\bf I}_3+\hat B_K)A_{\Kmac}\bvhatmac\big|^2+\int_{\hat{T}}\big|A_K^{\dagger}\hat B_K A_{\Kmac}\bvhatmac\big|^2
        +\int_{\hat{T}}\big|({\bf I}_3-A_K^{\dagger}A_{\Kmac})\bvhatmac\big|^2\\ 
        &\les h^4_K \|\bvmac\|_{H^1(\Kmac)}^2+\|A_K^{\dagger}B\|_{L^\infty(\hat{T})}^2\int_{\hat{T}}\big|A_{\Kmac}\bvhatmac\big|^2
        +\|A_K^{\dagger}(A_K-A_{\Kmac})\|_{L^\infty(\hat{T})}^2\int_{\hat{T}}|\bvhatmac|^2\\ 
        &\les h^4_K\|\bvmac\|_{H^1(\Kmac)}^2+h^2_K\|\bvmac\|_{L^2(\Kmac)}^2+h^{2k}_K\|\bvmac\|_{L^2(\Kmac)}^2
        \lesssim h^2_K \|\bvmac\|_{H^1(\Kmac)}^2.
        \end{split}
    \end{equation}
    Combining this estimate with \eqref{eqn:L2barStart}--\eqref{eqn:L2barStartStep3} yields
    \begin{align*}
        \|\bar{\bv}-\overbvmac\|_{L^2(\bar{K})}^2\les h^2\|\bvmac\|_{H^1(\Kmac)}^2.
    \end{align*}

\textbf{Part 3: }Next, we establish a similar result in the $H^1$ semi-norm.
We use the chain rule and a change of variables
to conclude
\begin{align*}
    |\overline{\bv}-\overbvmac|_{H^1(\bar{K})}^2
    &\les \int_{\hat{T}}\big|{\nabla}(A_K\hat{\bv})-{\nabla}(A_{\Kmac}\bvhatmac)\big|^2\\
    &\les \int_{\hat{T}}\big|{\nabla}
    (A_K(\hat{\bv}-\bvhatmac))\big|^2+\int_{\hat{T}}\big|{\nabla}((A_K-A_{\Kmac})\bvhatmac)\big|^2=:J_1+J_2.
\end{align*}
To bound $J_1$, we first use the product rule, \eqref{eqn:L2barStartStep3}, 
and \eqref{eqn:A_est}:
\begin{equation}\label{eqn:J1Start}
\begin{split}
    J_1
    &\les \|{\nabla}A_K\|_{L^\infty(\hat{T})}^2\int_{\hat{T}}|\hat{\bv}-\bvhatmac|^2+\| A_K\|_{L^\infty(\hat{T})}^2\int_{\hat{T}}|{\nabla}(\hat{\bv}-\bvhatmac)|^2\\
    &\les h^2_K\|\bvmac\|_{H^1(\Kmac)}^2+h^{-2}_K\int_{\hat{T}}|{\nabla}(\hat{\bv}-\bvhatmac)|^2.
\end{split}
\end{equation}
With a similar chain of inequalities as above, we have
\begin{align*}
    h^{-2}_K\int_{\hat{T}}|{\nabla}(\hat{\bv}-\bvhatmac)|^2&\les h^{-2}_K\int_{\hat{T}}\big|{\nabla}(\hat{\bv}-A_K^{\dagger}({\bf I}_3+\hat B_K)A_{\Kmac}\bvhatmac)\big|^2+h^{-2}_K\int_{\hat{T}}\big|{\nabla}(A_K^{\dagger}\hat B_K A_{\Kmac}\bvhatmac)\big|^2\\
    &\quad +h^{-2}_K\int_{\hat{T}}\big|{\nabla}(({\bf I}_3-A_K^{\dagger}A_{\Kmac})\bvhatmac)\big|^2=:L_1+L_2+L_3.
\end{align*}
The first term can be bounded from Part 1:
\begin{align*}
    L_1\les h^2_K\|\bvmac\|_{H^1(\Tmac)}^2.
\end{align*} 

Next, with the product rule, Lemma \ref{lem:AB_bound}, and a change of variables,
\begin{align*}
    L_2
    &\les h^{-2}_K\|{\nabla}(A_K^{\dagger}\hat B_K)\|_{L^\infty(\hat{T})}^2\int_{\hat{T}}|A_{\Kmac}\bvhatmac|^2+h^{-2}_K\|A_K^{\dagger}\hat B_K\|_{L^\infty(\hat
    {T})}^2\int_{\hat{T}}|{\nabla}(A_{\Kmac}\bvhatmac)|^2\\ &\les h^2_K\|\bvmac\|_{L^2(\Kmac)}^2+h^2_K\|\bvmac\|_{H^1(\Kmac)}^2.
\end{align*}

To bound $L_3$, we first apply the product rule:
\begin{align*}
    L_3&\les h^{-2}_K\|{\nabla}({\bf I}_3-A_K^{\dagger} A_{\Kmac})\|_{L^\infty(\hat{T})}^2\int_{\hat{T}}|\bvhatmac|^2
    +h^{-2}_K\|{\bf I}_3-A_K^{\dagger}A_{\Kmac}\|_{L^\infty(\hat{T})}^2\int_{\hat{T}}|{\nabla}\bvhatmac|^2.
\end{align*}
Note that with Lemmas \ref{lem:ATDiff} and \ref{lem:AB_bound}, 
\begin{align*}
    \|{\bf I}_3-A_K^{\dagger}A_{\Kmac}\|_{L^\infty(\hat{T})}^2\les \|A_K^{\dagger}\|_{L^\infty(\hat{K})}^2\|A_K-A_{\Kmac}\|_{L^\infty(\hat{T})}^2
    \les h^{2k}_K,
    \end{align*}
    and
    \begin{align*}
    \|{\nabla}({\bf I}_3-A_K^{\dagger}A_{\Kmac})\|_{L^\infty(\hat{T})}^2
    &\les \|{\nabla}A_K^{\dagger}\|_{L^\infty(\hat{T})}^2\|A_K-A_{\Kmac}\|_{L^\infty(\hat{T})}^2+\|A_K^{\dagger}\|_{L^\infty(\hat{T})}^2\|{\nabla}(A_K-A_{\Kmac})\|_{L^\infty(\hat{T})}^2\\&\les h_K^{2k}.
\end{align*}
Also, with \eqref{eqn:A_est},
\begin{align*}
    \int_{\hat{T}}|{\nabla}\bvhatmac|^2
    &\les \|A_{\Kmac}^{\dagger}\|_{L^\infty(\hat{T})}^2\int_{\hat{T}}|\nabla
(A_{\Kmac}\bvhatmac)|^2+\|{\nabla}A_{\Kmac}^{\dagger}\|_{L^\infty(\hat{T})}^2\int_{\hat{T}}|A_{\Kmac}\bvhatmac|^2\\
&\les h^2_K\|\bvmac\|_{H^1(\Kmac)}^2. 
\end{align*}
With these estimates, we have
\begin{align*}
    L_3\les h^{2k}_K\|\bvmac\|_{H^1(\Kmac)}^2.
\end{align*}
Combining the estimates for $L_1$, $L_2$, and $L_3$, we obtain $J_1\les h^2_K\|\bvmac\|_{H^1(\Kmac)}^2$.

For $J_2$, we can use the product rule and Lemma \ref{lem:ATDiff} to find
\begin{align*}
    J_2&\les \|{\nabla}(A_K-A_{\Kmac})\|_{L^\infty(\hat{T})}^2\int_{\hat{T}}|\bvhatmac|^2+\|A_K-A_{\Kmac}\|_{L^\infty(\hat{T})}^2\int_{\hat{T}}|{\nabla}\bvhatmac|^2\\ &\les h^2_K\|\bvmac\|_{H^1(\Kmac)}^2.
\end{align*}
Combining the bounds of $J_1$ and $J_2$ yields
\begin{align*}
    |\bar{\bv}-\overbvmac|_{H^1(\bar{K})}^2\les h^2_K\|\bvmac\|_{H^1(\Kmac)}^2.
\end{align*}

For the next claim, define $\tilde{q}:=q\circ \bPsict\circ \bPsimac^{-1}$ so that $q\circ \bPsict|_{\overline{T}}=\tilde{q}\circ \bPsimac|_{\overline{T}}$ for all $\overline{T}\in\overline{\Gamma}_h$. To satisfy the mean zero constraint, let $\overline{c}:= \frac{1}{|\Gammamac_h|}\int_{\Gammamac_h}\tilde{q}$, $q^{\rm m}=\tilde{q}-\overline{c}$, and note $q^{\rm m}\in \Qmac$. Now, there holds with a change of variables and the mean zero property of $q$:
\begin{align*}
    \|q\circ \bPsict-q^{\rm m}\circ \bPsimac\|_{L^2(\overline{\Gamma}_h)}^2
    &=\|\overline{c}\|_{L^2(\overline{\Gamma}_h)}^2\\ &= \frac{|\overline{\Gamma}_h|}{|\Gammamac_h|^2}\left|\sum_{K^{\rm m}\in\calT_h^{\rm m,ct}}\int_{K^{\rm m}}q\circ \bPsict\circ \bPsimac^{-1}\right|^2\\ 
    &\lesssim \left|\sum_{K\in \calT_h^{\rm ct}}\int_{K}q(J_{F_{K^m}\circ F_K^{-1}}-1)\right|^2\\
    &\le \left|\sum_{K\in \calT_h^{\rm ct}} h_K^{1/2}\|q\|_{L^2(K)} \|(J_{F_{K^m}}\circ F_K^{-1})(J_{F_K^{-1}})-1\|_{L^\infty(K)}\right|^2,
\end{align*}
where $J_{F_{K^m}\circ F_K^{-1}}$ is the product of the nonzero singular values of $\nabla (F_{K^m}\circ F_K^{-1})$,
and we applied the chain rule in the last inequality.

Continuing, we apply the Cauchy-Schwarz inequality
to conclude
\begin{equation}\label{eqn:CSSq}
\begin{split}
    \|q\circ \bPsict-q^{\rm m}\circ \bPsimac\|_{L^2(\overline{\Gamma}_h)}^2
    &\lesssim 
    \left(\sum_{K\in \calT_h^{\rm ct}} h_K^2\right)\left(\sum_{K\in \calT_h^{\rm ct}} h_K^{-1}\|(J_{F_{K^m}}\circ F_K^{-1})(J_{F_K^{-1}})-1\|_{L^\infty(K)}^2\|q\|_{L^2(K)}^2  \right)\\
    &\lesssim 
    \sum_{K\in \calT_h^{\rm ct}} h_K^{-1}\|(J_{F_{K^m}}\circ F_K^{-1})(J_{F_K^{-1}})-1\|_{L^\infty(K)}^2\|q\|_{L^2(K)}^2.
\end{split}
\end{equation}

We then write
  \begin{equation}\label{eqn:JDiffers}
      |(J_{F_{K^m}}\circ F_K^{-1})(J_{F_K^{-1}})-1|
  =|(J_{F_{K^m}}-J_{F_K})\circ F_{K^{-1}}|\cdot |J_{F_K^{-1}}|
  \lesssim h^{-2}_K |(J_{F_{K^m}}-J_{F_K})\circ F_{K^{-1}}|. 
  \end{equation}
We then proceed as in the proof of Lemma \ref{lem:ATDiff}, but using the algebraic inequality
\[|\sqrt{a}-\sqrt{b}|\leq \frac{|a-b|}{2\min\{\sqrt{a},\sqrt{b}\}}\qquad a,b>0. 
\]
Letting $C_K=\nab F_{K}^\intercal \nab F_K$ and $C_{K^m}=\nab F_{K^m}^\intercal \nab F_{K^m}$, there holds with \eqref{eqn:BDiff} and Lemma \ref{lem:PsiMac}
\begin{align*}|(J_{F_{K^m}}-J_{F_K})\circ F_{K^{-1}}|&\les \frac{|\det C_K-\det C_{K^m}|}{\min\{J_{F_{K}},J_{F_{K^m}}\}}\\&\les \frac{(|C_K|+|C_{K^m}|)|C_K-C_{K_m}|}{h^2_K} \\ &\les h_K^{k+2}.  \end{align*}
Applying this estimate and \eqref{eqn:JDiffers} to \eqref{eqn:CSSq} yields
 \[\|q\circ \bPsict-q^{\rm m}\circ \bPsimac\|_{L^2(\overline{\Gamma}_h)}\les h^{k-1/2}\|q\|_{L^2(\Gamma_h)} \le h \|q\|_{L^2(\Gamma_h)},
 \]
 where we used $k\ge 2$ in the last equality.
\end{proof}

\end{document}